
\documentclass{siamltex}
\usepackage{fullpage,enumerate,setspace,changepage,multirow,rotating,xcolor}
\usepackage{graphicx,amsmath,amssymb,amsfonts,url, algorithm, algorithmic, etoolbox}
\usepackage{array,subfigure}
\usepackage[small]{caption}

\AtBeginEnvironment{align}{\setcounter{subeqn}{0}}
\newcounter{subeqn} \renewcommand{\thesubeqn}{\theequation\alph{subeqn}}%
\newcommand{\subeqn}{%
  \refstepcounter{subeqn}
  \tag{\thesubeqn}
}
\def\grad{\nabla}



\def\cA{\mathcal{A}}
\def\cB{\mathcal{B}}
\def\cC{\mathcal{C}}

\def\cF{\mathcal{F}}
\def\cG{\mathcal{G}}

\def\cI{\mathcal{I}}

\def\cK{\mathcal{K}}
\def\cL{\mathcal{L}}
\def\cM{\mathcal{M}}

\def\cO{\mathcal{O}}
\def\cP{\mathcal{P}}
\def\cQ{\mathcal{Q}}

\def\cS{\mathcal{S}}

\def\cU{\mathcal{U}}

\def\smskip{\smallskip}

\def\texitem#1{\par\smskip\noindent\hangindent 25pt
               \hbox to 25pt {\hss #1 ~}\ignorespaces}


\def\norm#1{\|#1\|}

\newcommand{\BEAS}{\begin{eqnarray*}}
\newcommand{\EEAS}{\end{eqnarray*}}
\newcommand{\BEA}{\begin{eqnarray}}
\newcommand{\EEA}{\end{eqnarray}}
\newcommand{\BEQ}{\begin{eqnarray}}
\newcommand{\EEQ}{\end{eqnarray}}
\newcommand{\BIT}{\begin{itemize}}
\newcommand{\EIT}{\end{itemize}}
\newcommand{\BNUM}{\begin{enumerate}}
\newcommand{\ENUM}{\end{enumerate}}

\newcommand{\BA}{\begin{array}}
\newcommand{\EA}{\end{array}}


\newcommand{\reals}{\mathbb{R}}
\newcommand{\integers}{\mathbb{Z}}




\newcommand{\Tr}{\mathop{\bf Tr}}




\newcommand{\argmin}{\mathop{\rm argmin}}
\newcommand{\argmax}{\mathop{\rm argmax}}

\newcommand{\dom}{\mathop{\bf dom}}

\newcommand{\relint}{\mathop{\bf rel int}}






\newif\ifpagenumbering
\pagenumberingtrue

\pagenumberingfalse

%
%
\newsavebox{\theorembox}
\newsavebox{\lemmabox}
\newsavebox{\remarkbox}
\newsavebox{\assbox}
\savebox{\theorembox}{\noindent\bf Theorem}
\savebox{\lemmabox}{\noindent\bf Lemma}
\savebox{\remarkbox}{\noindent\bf Remark}
\savebox{\assbox}{\noindent\bf Assumption}
\newtheorem{assumption}{\usebox{\assbox}}
\newtheorem{remark}{\usebox{\remarkbox}}[section]






\def\norm#1{\|#1\|}


\newcommand{\seq}{\reals^{\mathbb{N}}}
\makeatletter
\newcommand{\thickhline}{%
    \noalign {\ifnum 0=`}\fi \hrule height 1pt
    \futurelet \reserved@a \@xhline
}
\makeatother

\title{Semidefinite Programming For Chance Constrained Optimization Over Semialgebraic Sets
   \thanks{The last two authors have the same contribution.}
     }


\author{A. M. JASOUR\thanks{EE Department, The Pennsylvania State University, PA, USA ({\tt jasour@psu.edu})}
        \and N. S. AYBAT \thanks{IE Department, The Pennsylvania State University, PA, USA  } ({\tt nsa10@psu.edu})
\and C. M. LAGOA \thanks{EE Department, The Pennsylvania State University, PA, USA  ({\tt lagoa@psu.edu})} }

\begin{document}

\maketitle

\begin{abstract}
In this paper, ``chance optimization" problems are introduced, where one aims at maximizing the probability of a set defined by polynomial inequalities. These problems are, in general, nonconvex and computationally hard. With the objective of developing systematic numerical procedures to solve such problems, a sequence of convex relaxations based on the theory of measures and moments is provided, whose sequence of optimal values is shown to converge to the optimal value of the original problem. Indeed, we provide a sequence of semidefinite programs of increasing dimension which can arbitrarily approximate the solution of the original problem. To be able to efficiently solve the resulting large-scale semidefinite relaxations, a first-order augmented Lagrangian algorithm is implemented.
Numerical examples are presented to illustrate the computational performance of the proposed approach.
\end{abstract}

\begin{keywords}
Semialgebraic set, 
Chance constrained, SDP relaxation, Augmented Lagrangian, First-order methods.
\end{keywords}



\section{Introduction} \label{intro}
In this paper, we aim at solving \emph{chance optimization problems;} i.e., problems which involve maximization of the probability of a semialgebraic set defined by polynomial inequalities. More precisely, given a probability space $\left(\reals^m,\bar{\Sigma}_q,\bar{\mu}_q\right)$ with $\bar{\Sigma}_q$ denoting the Borel $\sigma$-algebra of $\reals^m$ and $\bar{\mu}_q:\bar{\Sigma}_q\rightarrow\reals_+$ denoting a finite (positive) Borel measure on $\bar{\Sigma}_q$, we focus on the problem given in \eqref{intro_union} over decision variable $x \in {{\mathbb{R} }}^n$.
\begin{align} \label{intro_union}
\mathbf{P^*} := \sup_{x\in\reals^n}~\bar{\mu}_q \left(\bigcup_{k=1,\ldots,N}\bigcap_{j=1,\ldots,\ell_k} \left\{ q\in\reals^m:\  \mathcal{P}^{(k)}_j(x,q)\geq0 \right\}\right),
\end{align}
where 
$\mathcal{P}^{(k)}_j:{{\mathbb{R}}}^n\times {{\mathbb{R}}}^m\rightarrow{\mathbb{R}}$, $j=1,2,\dots ,\ell_k$ and $k=1,\ldots,N$ are given polynomials. Let $\mathcal K_k :=\left\{(x,q): \mathcal{P}_j^{(k)}(x,q)\geq 0,\ j=1,\ldots,\ell_k\right\}$ and $\cK:=\bigcup_{k=1}^N\cK_k$. Under the assumption that $\cK$ is bounded, we show that by solving a sequence of semidefine programming~(SDP) problems of growing dimension, we can construct a sequence $\{\mathbf{y}^d_\mathbf{x}\}_{d\in\integers_+}\subset\seq$ that has an accumulation point in the weak-$\star$ topology of $\ell_\infty$, and for every accumulation point $\mathbf{y_x^*}\in\seq$, there is a representing finite (positive) Borel measure $\mu_x^*$ such that any $x^*\in supp(\mathbf{\mu_x^*})$ is an optimal solution to \eqref{intro_union}, i.e., the supremum $\mathbf{P^*}$ is attained at $x^*$, where $\seq$ denotes the vector space of real sequences. Note that the problem of interest in \eqref{intro_union}, when reformulated in \emph{hypograph} form, can be equivalently written as a chance constrained optimization problem: 
$\sup_{x\in R^n, \gamma\in\reals}\left\{\gamma:\ \bar{\mu}_q \left(\bigcup_{k=1,\ldots,N}\bigcap_{j=1,\ldots,\ell_k} \left\{ q\in \reals^m:\ \mathcal{P}^{(k)}_j(x,q)\geq0 \right\}\right) \geq \gamma\right\}$. First, the emphasis will be placed on the following special case of \eqref{intro_union}, where $N=1$,
\begin{equation}
\label{intro_max}
\mathbf{P^*} :=\sup_{x\in\reals^n}~\bar{\mu}_q \bigg(\big\{ q\in\reals^m:\  \mathcal{P}_j(x,q)\geq 0,\quad j=1,\ldots,\ell \big\}\bigg),
\end{equation}
and then all the results derived for the special case \eqref{intro_max} will be extended to the case where $N>1$.

The potential application area of this problem class is quite large and encompasses many well-known problems in different areas as special cases. For example, designing probabilistic robust controllers~\cite{r_App_Control1}, model predictive controllers in presence of random disturbances \cite{r_App_Predictive Control2,r_App_Predicitive Control1,r_App_Predictive Control3}, and optimal path planning and obstacle avoidance problems in robotics \cite{r_App_Robotic 1,r_App_Robotic 3,r_App_Robotic 2} can be cast as special cases of this framework. Moreover, problems in the area of economics, finance, and trust design~\cite{r_App_Economy 3,r_App_Economy 1,r_App_Economy 2} can also be formulated as \eqref{intro_union} and \eqref{intro_max}. Although, in some particular cases, the problem in \eqref{intro_union} is convex (e.g., see \cite{r_convex_chance 1,r_convex_chance 2}), in general, chance constrained problems are not convex; e.g., see \cite{r_convex_chance 1} for non-convex chance constrained linear programs. In this paper, we use previous results on moments of measures (e.g., see \cite{r_measure_ref 1,r_measure_ref 2}) to develop a sequence of SDP problems, known as Lasserre's hierarchy~\cite{r_measure_ref 2}, whose solutions converge to the solution of \eqref{intro_union}.


\subsection{ Previous Work}

Several approaches have been proposed to solve chance constrained problems. The main idea behind most of the proposed methods is to find a tractable approximation for chance constraints. One particular method is the so-called \emph{scenario approach;} see \cite{r_scenario 3,r_scenario 4,r_scenario 1,r_scenario 2,r_scenario 5} and the references therein. In this approach, the probabilistic constraint is replaced by a (large) number of deterministic constraints obtained by drawing independent identically distributed~(iid) samples of random parameters. Being a randomized approach, there is always a positive probability of failure (perhaps small). In \cite{r_robust 1,r_robust 2,r_robust 3,r_robust 4,r_robust 5}, robust optimization is used to deal with uncertain linear programs (LP). In this method, the uncertain LP is replaced by its robust counterpart, where the worst case realization of uncertain data is considered. The proposed method is not computationally tractable for every type of uncertainty set. A specific case that is tractable is LP with ellipsoidal uncertainty set \cite{r_robust 1}. In \cite{r_upper 1,r_upper 3,r_upper 2}, an alternative approach is proposed where one analytically determines an upper bound on the probability of constraint violation. Although this method does provide a convex approximation, it can only be applied to specific uncertainty structures. In \cite{r_bern 1,r_bern 2} the authors propose the so-called Bernstein approximation where a convex conservative approximation of chance constraints is constructed using 
generating functions. Although approximation is efficiently computable, it is only applicable to problems with convex constraints that are affine in random vector $q\in {{\mathbb{R} }}^m$. Moreover, components of $q$ need to be independent and have computable finite generating functions. 
In \cite{r_kin 1,r_kin 2,r_kin 3} convex relaxations of chance constrained problems are presented. The concept of polynomial kinship function is used to estimate an upper bound on the probability of constraint violation. Solutions to a sequence of relaxed problems are shown to converge to a solution of the original problem as the degree of the polynomial kinship function increases along the sequence. In \cite{r_kin 3,r_indi}, an equivalent convex formulation is provided based on the theory of moments. In this method the probability of a polynomial being negative is approximated by computing polynomial approximations for univariate indicator functions \cite{r_indi}.

\emph{Distributionally robust} chance constrained programming -- see \cite{r_ref2 1, r_ref2 2, r_ref2 3, r_ref2 4, r_ref2 5}, is another popular tool for dealing with uncertainty in the problem, where only a finite number of moments $m_\alpha$
of the underlying measure $\bar{\mu}_q$ are assumed to be known, i.e., $\{m_\alpha\}_{\alpha\in \cA}$ is known for $\cA\subset\mathbb{N}^m$ such that $|\cA|<\infty$. In this approach robust chance constraints are formulated by considering the worst case measure within a family of measures with moments equal to $\{m_\alpha\}_{\alpha\in A}$. However, proposed methods in this literature are mainly limited to linear chance constraints and/or to specific types of uncertainty distributions. For instance, in~\cite{r_ref2 1}, under the assumption $\bar{m}=E_{\bar{\mu}_q}[q]$ and $\bar{S}=E_{\bar{\mu}_q}[(q-\bar{m})(q-\bar{m})^T]$ are known, the linear chance constraint of the form $\bar{\mu}_q\left(\{q:\ q^T x\geq 0\}\right)\geq 1-\epsilon$ is replaced by its robust counterpart: $\inf_{\mu_q\in\cM}\mu_q\left(\{q:\ q^T x\geq 0\}\right)\geq 1-\epsilon$, where $\cM$ is the set of finite (positive) Borel measures on $\bar{\Sigma}_q$ with their means and covariances equal to $\bar{m}$ and $\bar{S}$, respectively; and it is shown that these robust constraints can be represented as second-order cone constraints for a wide class of probability distributions. 
In~\cite{r_ref2 2}, the authors has reviewed and developed different approximation methods for problems with joint chance constraints. 
In the proposed method, joint chance constraints are decomposed into individual chance constraints, and classical robust optimization approximation is used to deal with the new constraints. In \cite{r_ref2 3} a tractable approximation method for probabilistically dependent linear chance constraints is presented. In \cite{r_ref2 4} linear chance constraints with Gaussian and log-concave uncertainties are addressed, and it is shown that they can be reformulated as semi-infinite optimization problems; moreover, tight probabilistic bounds are provided for the resulting \emph{comprehensive robust optimization} problems~\cite{BenTal10,BenTal06}. In~\cite{r_ref2 5} an SDP formulation is provided to approximate distributionally robust chance constraints where only the support of $\bar{\mu}_q$, and its first and second order moments are known.

In this paper, we take a different approach to deal with chance constrained problems. The proposed method is based on volume approximation results in \cite{r_vol} and the theory of moments \cite{r_measure_ref 1,r_measure_ref 2}. In \cite{r_vol}, a hierarchy of SDP problems are proposed to compute the volume of a given compact semialgebraic set. 
It is shown 
that the volume of a semialgebraic can be computed by solving a maximization problem over finite (positive) Borel measures supported on the given set, and restricted by the Lebesgue measure on a simple set containing the semialgebraic set of interest. Building on this result, we propose the \emph{chance optimization} problem over semialgebraic sets --see our preliminary results in~\cite{r_our work}. In particular, we address the problem of probability maximization over the union of semialgebraic sets defined by intersections of finite number of polynomial inequalities as in~\eqref{intro_union}. Here, one needs to search for the (positive) Borel measure with maximum possible mass on the given semialgebraic set, while simultaneously searching for an upper bound probability measure over a simple set containing the semialgebraic set and restricting the Borel measure.
%
\subsection{The Sequel}
The outline of the paper is as follows: in Section 2, the notation adopted in the paper, and preliminary results on measure theory are presented; in Sections~3 and 4, we propose equivalent problems, and sequences of SDP relaxations to \eqref{intro_max} and \eqref{intro_union}, respectively; and show that the sequence of optimal solutions to SDP relaxations converge to the solutions of the original problems. 
In Section~5, we implement an efficient first-order algorithm to solve regularized SDP relaxations of the chance constrained problems, and finally, present numerical results, followed by some concluding remarks given in Section~6. 


\section{ Notation and Preliminary Results}

\subsection{ Notations and Definitions}
\label{sec:definitions}
Throughout the paper, given a sequence $\mathbf{p}=\{p_\alpha\}_{\alpha\in\cA}\subset\reals$ over a countable index set $\cA\subset\mathbb{N}^n$, we assume that the elements of $\cA$ is sorted according to \emph{graded reverse lexicographic order} (grevlex): $\cA=\{\alpha^{(i)}:\ i=1,\ldots,|\cA|\}$ such that $\alpha ^{(1)} <_g \ldots <_g \alpha ^{(|\cA|)}$, where $|\cA|$ denotes the cardinality of $\cA$; and the order on $\cA$ also induces an order on the elements of $\mathbf{p}=[p_{\alpha^{(1)}},\ldots, p_{\alpha^{(|\cA|)}}]^T\in\reals^{|\cA|}$. Throughout the paper the notation $(\bf{p})_\alpha$ refers to $p_\alpha$. Let $\mathbb{R}[x]$ be the ring of real polynomials in the variables $x \in \mathbb{R}^n$. Given $\cP\in\mathbb{R}[x]$, we will represent $\cP$ as $\sum_{\alpha\in\mathbb{N}^n} p_\alpha x^\alpha$ using the standard basis $\{x^\alpha\}_{\alpha\in \mathbb{N}^n}$ of $\mathbb{R}[x]$, where $x^\alpha:=\Pi_{j=1}^nx_j^{\alpha_j}$, and $\mathbf{p}=\{p_\alpha\}_{\alpha\in\mathbb{N}^n}$ denotes the sequence of polynomial coefficients. Note that $\mathbf{p}$ contains finitely many nonzeros, and we assume that the elements of the coefficient sequence $\mathbf{p}=\{p_\alpha\}_{\alpha\in\mathbb{N}^n}$ are sorted according to \emph{grevlex} order on the corresponding monomial exponent $\alpha$. 
 Given $\mathbf{y}=\{y_\alpha\}_{\alpha\in\mathbb{N}^n}\subset\reals$, 
 let $L_\mathbf{y}:\reals[x]\rightarrow\reals$ be a linear map defined as
\begin{equation}
\label{eq:lin_map}
\cP \quad \mapsto \quad L_\mathbf{y}(\cP)=\sum_{\alpha\in\mathbb{N}^n}p_\alpha y_\alpha, \quad \hbox{where} \quad \cP(x)=\sum_{\alpha\in\mathbb{N}^n} p_\alpha x^\alpha.
\end{equation}
Given $n$ and $d$ in $\mathbb{N}$, we define $S_{n,d} := \binom{d+n}{n}$ and $\mathbb{N} ^{\rm n}_d := \{\alpha \in \mathbb N^n : \norm{\alpha}_1 \leq d \}$. Let $\mathbb R_{\rm d}[x] \subset \mathbb R [x]$ denote the set of polynomials of degree at most $d\in \mathbb{N}$, which is indeed a vector space of dimension $S_{n,d}$. Similar to $\cP\in\mathbb{R}[x]$, given $\cP\in\mathbb R_{\rm d}[x]$, $\mathbf{p}=\{p_\alpha\}_{\alpha\in\mathbb{N}^{\rm n}_d}$ is sorted such that $\mathbf{p}=[p_{\alpha^{(1)}},\ldots, p_{\alpha^{(S_{n,d})}}]^T\in\reals^{S_{n,d}}$, where $\mathbb{N} ^{\rm n}_d\ni\mathbf{0} = \alpha ^{(1)} <_g \ldots <_g \alpha ^{(S_{n,d})}$. 
Moreover, let $\mathbb{S}^2[x] \subset \mathbb R [x]$ be the set of sum of squares~(SOS) polynomials. $s:\reals^n\rightarrow\reals$ is an SOS polynomial if it can be written as a sum of \emph{finitely} many squared polynomials, i.e., $s(x)= \sum_{j=1}^{\ell} h_j(x)^2$ for some $\ell<\infty$ and $h_j\in\reals[x]$ for $1\leq j\leq \ell$.

Let $\seq$ denote the vector space of real sequences, and let $\cM(\cK)$ be the set of finite (positive) Borel measures $\mu$ such that $supp(\mu)\subset\cK$, where $supp(\mu)$ denotes the support of the measure $\mu$; i.e., the smallest closed set that contains all measurable sets with strictly positive $\mu$ measure. A sequence $\mathbf y = \{ y_ \alpha \}_{\alpha\in\mathbb{N}^n}\in\seq$
is said to have a \emph{representing measure}, if there exists a finite Borel measure $\mu$ on $\reals^n$ such that $y_{\alpha } = \int{x^{\alpha} d\mu }$ for every $\alpha \in \mathbb N ^n$ -- see~\cite{r_measure_ref 1, r_measure_ref 2}. In this case, $ \mathbf y $ is called the moment sequence of the measure $\mu $. Given two measures $\mu_1$ and $\mu_2$ on a Borel $\sigma $-algebra $\Sigma $, the notation $\mu_1 \preccurlyeq \mu _2$ means  $\mu _1(S)\le \mu_2(S)$ for any set $S \in \Sigma $. Moreover, if $ \mu_1$ and $ \mu_2$ are both measures on Borel $\sigma$-algebras $\Sigma_1$ and $\Sigma_2$, respectively, then $\mu =\mu_1 \times \mu_2$ denotes the product measure satisfying $ \mu(S_1 \times S_2)=\mu_1 (S_1) \mu_2(S_2)$ for any measurable sets $S_1\in \Sigma_1$, $S_2 \in \Sigma_2$ \cite{r_vol}. Let $C\subset\reals^n$, $\Sigma(C)$ denotes the Borel $\sigma$-algebra over $C$. Given two square symmetric matrices $A$ and $B$, the notation $A \succcurlyeq 0$ denotes that $A$ is positive semidefinite, and $A\succcurlyeq B$ stands for $A-B$ being positive semidefinite.

\textbf{Putinar's property:} A closed semialgebraic set $\cK = \{ x\in \mathbb{R}^n: \mathcal{P}_j(x)\geq0,\ j=1,2,\dots ,\ell\ \}$ defined by polynomials $\mathcal{P}_j\in \mathbb R [x]$ satisfies \emph{Putinar's property}~\cite{r_measure_ref 6} if there exists $\mathcal{U}\in \mathbb R [x]$ such that $\lbrace x:  \mathcal{U}(x) \geq 0 \rbrace $ is compact and $\mathcal{U} = s_0 + \sum_{j=1}^{\ell} s_j\mathcal{P}_j $ for some SOS polynomials $\{s_j\}_{j=0}^\ell \subset \mathbb{S}^2[x]$ -- see~ \cite{r_measure_ref 5,r_measure_ref 2,r_measure_ref 6}. Putinar's property holds if the level set $\lbrace x: \mathcal{P}_j(x) \geq 0 \rbrace$ is compact for some $j$, or if all $ \mathcal{P}_j $
are affine and $\cK $ is compact - see~\cite{r_measure_ref 5}. 
Putinar's property is not a geometric property of the semi-algebraic set $\cK$, but rather an algebraic property related to the representation of the set by its defining polynomials. Hence, if there exits $M>0$ such that the polynomial $\cP_{\ell+1}(x):= M- \Vert x \Vert^2 \geq 0$ for all $x\in\cK$, then the \textit{new representation} of the set $\cK = \{ x\in \mathbb{R}^n: \mathcal{P}_j(x)\geq0,\ j=1,2,\dots ,\ell+1\ \}$ satisfies Putinar's property.


\textbf{Moment matrix:} Given $d\geq 1$ and a sequence $\{y_\alpha\}_{\alpha\in\mathbb{N}^n}$, the moment matrix $M_d({\mathbf y})\in\reals^{S_{n,d}\times S_{n,d}}$ is a symmetric matrix and its $(i,j)$-th entry is defined as follows~\cite{r_measure_ref 1,r_measure_ref 2}:
\begin{equation}\label{momnt matirx def}
M_d ( \mathbf y )(i,j):= L_{\mathbf y}\left(x^{\alpha^{(i)}+\alpha^{(j)}}\right)=y_{\alpha^{(i)}+\alpha^{(j)}},\ \ \ 1 \leq i,j \leq S_{n,d},
\end{equation}
where 
$\mathbb{N} ^{\rm n}_d=\{\alpha ^{(i)}\}_{i=1}^{S_{n,d}}$ such that $\mathbf{0} = \alpha ^{(1)} <_g \ldots <_g \alpha ^{(S_{n,d})}$ are sorted according to \emph{grevlex} order. 

Let $\cB_d^T=\left[x^{\alpha^{(1)}},\ldots, x^{\alpha^{(S_{n,d})}}\right]^T$ denote the vector comprised of the monomial basis of $\mathbb R_{\rm d}[x]$. Note that the moment matrix can be written as $M_d({\mathbf y}) = L_\mathbf{y}\left(\cB_d \cB_d^T\right)$; here, the linear map $L_\mathbf{y}$ operates componentwise on the matrix of polynomials, $\cB_d \cB_d^T$. For instance, let $d=2$ and $n=2$; the moment matrix containing moments up to order $2d$ is given as
\begin{equation} \label{moment matrix exa}
M_2\left({\mathbf y}\right)=\left[ \begin{array}{c}

\begin{array}{ccc} y_{00} \ | & y_{10} & y_{01}| \end{array}
\begin{array}{ccc} y_{20} & y_{11} & y_{02} \end{array}
 \\
 \begin{array}{ccc} - & - & - \end{array}
\ \ \ \  \begin{array}{ccc} - & - & - \end{array}
 \\

 \begin{array}{ccc} y_{10}\ | & y_{20} & y_{11}| \end{array}
\ \begin{array}{ccc} y_{30} & y_{21} & y_{12} \end{array}
 \\

 \begin{array}{ccc} y_{01}\ | & y_{11} & y_{02}| \end{array}
\ \begin{array}{ccc} y_{21} & y_{12} & y_{03} \end{array}
 \\

 \begin{array}{ccc} - & - & - \end{array}
\ \ \ \ \  \begin{array}{ccc} - & - & - \end{array}
 \\

 \begin{array}{ccc} y_{20}\ | & y_{30} & y_{21}| \end{array}
\ \begin{array}{ccc} y_{40} & y_{31} & y_{22} \end{array}
 \\

 \begin{array}{ccc} y_{11}\ | & y_{21} & y_{12}| \end{array}
\ \begin{array}{ccc} y_{31} & y_{22} & y_{13} \end{array}
 \\

 \begin{array}{ccc}y_{02}\ | & y_{12} & y_{03}| \end{array}
\ \begin{array}{ccc} y_{22} & y_{13} & y_{04} \end{array}

 \end{array}
\right].
\end{equation}
\textbf{Localizing matrix:} Given a polynomial $\mathcal{P} \in \mathbb R [x]$, let $ \mathbf p = \{ p_{\gamma }\}_{\gamma\in\mathbb{N}^n}$ be its coefficient sequence in standard monomial basis, i.e., $\cP(x)=\sum_{\alpha\in\mathbb{N}^n} p_\alpha x^\alpha$, 
the $(i,j)$-th entry of the \emph{localizing matrix} $M_d(\mathbf{y};\mathbf{p})\in\reals^{S_{n,d}\times S_{n,d}}$ with respect to $\mathbf y $ and $\mathbf p$ is defined as follows~\cite{r_measure_ref 1,r_measure_ref 2}:
\begin{equation}\label{localization matrix def}
M_d(\mathbf y;\mathbf p)(i,j) := L_{\mathbf{y}}\left(\cP x^{\alpha^{(i)}+\alpha^{(j)}}\right)=\sum_{\gamma \in \mathbb N^n} p_{\gamma} y_{\gamma +\alpha^{(i)}+\alpha^{(j)}}, \ \ 1 \leq i,j \leq  S_{n,d}.
\end{equation}
Equivalently, $M_d(\mathbf y;\mathbf p) = L_{\mathbf{y}}\left(\mathbf \cP \cB_d\cB_d^T\right)$, where $L_{\bf y}$ operates componentwise on $\cP \cB_d\cB_d^T$. For example, given $\mathbf{y}=\{y_\alpha\}_{\alpha\in\mathbb{N}^2}$ and the coefficient sequence $\mathbf{p}=\{p_\alpha\}_{\alpha\in\mathbb{N}^2}$ corresponding to polynomial $\cP$,
\begin{equation}
 \mathcal{P}(x_1,x_2)=a-bx_1-cx^2_2,
\end{equation}
the localizing matrix for $d=1$ is formed as follows
\begin{equation}
  M_1(\mathbf{y};\mathbf p)= \begin{small}
 \left[ \begin{array}{ccc}
{ay}_{00}-by_{10}-cy_{02} & {ay}_{10}-by_{20}-cy_{12} & {ay}_{01}-by_{11}-cy_{03} \\
{ay}_{10}-by_{20}-cy_{12} & {ay}_{20}-by_{30}-cy_{22} & {ay}_{11}-by_{21}-cy_{13} \\
{ay}_{01}-by_{11}-cy_{03} & {ay}_{11}-by_{21}-cy_{13} & {ay}_{02}-by_{12}-cy_{04} \end{array}
\right].
 \end{small}
\end{equation}
\subsection{ Preliminary Results}

%
In this section, we state some standard results found in the literature that will be referred to later in Sections~\ref{sec:intersection_prob} and~\ref{sec:union_prob}.
\begin{lemma}\label{lemma1}
Let $\mu$ be a Borel probability measure supported on the hyper-cube $[-1, 1]^n$. Its moment sequence $\mathbf y\in\reals^\mathbb{N}$ satisfies $\Vert \mathbf y \Vert_{\infty} \leq 1$.
\end{lemma}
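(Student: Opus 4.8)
The plan is to bound each moment $y_\alpha = \int_{[-1,1]^n} x^\alpha \, d\mu$ directly using the fact that $\mu$ is a \emph{probability} measure and that the integrand is uniformly bounded on the support. For any $\alpha\in\mathbb{N}^n$ and any $x\in[-1,1]^n$ we have $|x^\alpha| = \prod_{j=1}^n |x_j|^{\alpha_j} \leq 1$, since each $|x_j|\leq 1$ forces $|x_j|^{\alpha_j}\leq 1$. Hence $|y_\alpha| \leq \int_{[-1,1]^n} |x^\alpha|\, d\mu \leq \int_{[-1,1]^n} 1\, d\mu = \mu([-1,1]^n) = 1$, where the last equality uses that $\mu$ is a probability measure supported on the cube. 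Taking the supremum over $\alpha$ gives $\|\mathbf y\|_\infty = \sup_\alpha |y_\alpha| \leq 1$.

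First I would state that the moments are well-defined and finite: since $\mu$ has finite total mass and the monomials are continuous (hence bounded, being integrated over the compact support $[-1,1]^n$), every $y_\alpha$ exists as a finite real number, so $\mathbf y$ is genuinely an element of $\reals^\mathbb{N}$. Then I would carry out the pointwise bound $|x^\alpha|\leq 1$ on the support, apply monotonicity of the integral together with the triangle inequality for integrals ($|\int f\,d\mu|\leq \int |f|\,d\mu$), and conclude with the normalization $\mu([-1,1]^n)=1$.

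There is essentially no obstacle here; the statement is a one-line consequence of the cube being contained in the unit $\ell_\infty$-ball and $\mu$ being a probability measure. The only point deserving a word of care is making sure $supp(\mu)\subset[-1,1]^n$ is used correctly so that the integral can be restricted to the cube (points outside the support contribute nothing), and that the total mass is exactly $1$ rather than merely finite — both are given in the hypothesis. I would write this up in two or three sentences rather than as a displayed multi-step argument.
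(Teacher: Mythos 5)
Your proof is correct and follows essentially the same route as the paper: bound the integrand by $1$ on the support and use that $\mu$ has total mass $1$. If anything, your version is slightly cleaner, since the paper's intermediate inequality $\int |x^\alpha|\,d\mu \leq \int |x|\,d\mu$ is an unnecessary (and loosely stated) detour compared to your direct pointwise bound $|x^\alpha|\leq 1$.
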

\begin{proof}
Since $supp(\mu) \subset [-1, 1]^n$ and $\mu$ is a probability measure, we have
$\vert y_{\alpha } \vert \leq \int{ \vert x^{\alpha} \vert d\mu }\leq \int{ \vert x \vert d\mu }\leq 1$ for each $\alpha\in \mathbb N^n$. Hence, $\Vert \mathbf y \Vert_{\infty} \leq 1 $.  
\end{proof}

The following lemmas give necessary, and sufficient conditions for $\mathbf y$ to have a representing measure $\mu$ -- for details see \cite{r_vol,r_measure_ref 4,r_measure_ref 2}.
\begin{lemma}
\label{lem:nec_cond}
Let $\mu$ be a finite Borel measure on $\reals^n$, and $\mathbf{y}=\{y_\alpha\}_{\alpha\in\mathbb{N}^n}$ such that $y_\alpha=\int x^\alpha d\mu$ for all $\alpha\in\mathbb{N}^n$. Then $M_d(\mathbf y)\succcurlyeq 0$ for all $ d\in \mathbb N$.
\end{lemma}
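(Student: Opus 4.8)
The plan is to verify the defining inequality $v^\top M_d(\mathbf y)\,v\ge 0$ directly for an arbitrary test vector $v\in\reals^{S_{n,d}}$, by recognizing the associated quadratic form as the $\mu$-integral of a squared polynomial. First I would fix $d\in\mathbb N$ and $v=[v_1,\ldots,v_{S_{n,d}}]^\top\in\reals^{S_{n,d}}$, and introduce the polynomial $p:=\cB_d^\top v=\sum_{i=1}^{S_{n,d}} v_i\, x^{\alpha^{(i)}}\in\mathbb R_{\rm d}[x]$, whose coefficient vector in \emph{grevlex} order is exactly $v$. Using the representation $M_d(\mathbf y)=L_\mathbf{y}\left(\cB_d\cB_d^\top\right)$ together with the linearity of $L_\mathbf{y}$ (acting entrywise on the matrix $\cB_d\cB_d^\top$), one obtains
\begin{equation*}
v^\top M_d(\mathbf y)\,v = v^\top L_\mathbf{y}\!\left(\cB_d\cB_d^\top\right) v = L_\mathbf{y}\!\left(v^\top\cB_d\cB_d^\top v\right)=L_\mathbf{y}\!\left(p^2\right),
\end{equation*}
which is a finite linear combination of the entries $\{y_\beta\}_{\beta\in\mathbb N^n_{2d}}$ determined by the coefficients of $p^2\in\mathbb R_{\rm 2d}[x]$. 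Equivalently, one can argue entrywise from \eqref{momnt matirx def}, writing $v^\top M_d(\mathbf y)\,v=\sum_{i,j} v_iv_j\, y_{\alpha^{(i)}+\alpha^{(j)}}$ and recognizing the right-hand side as $L_\mathbf{y}(p^2)$.

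Next I would translate $L_\mathbf{y}(p^2)$ back into an integral. Writing $p(x)^2=\sum_{\beta\in\mathbb N^n_{2d}} c_\beta x^\beta$ and invoking the hypothesis $y_\beta=\int x^\beta\, d\mu$ for every $\beta\in\mathbb N^n$ (so, in particular, all moments up to order $2d$ are finite), the definition \eqref{eq:lin_map} of $L_\mathbf{y}$ gives $L_\mathbf{y}(p^2)=\sum_{\beta} c_\beta y_\beta=\sum_\beta c_\beta\int x^\beta\, d\mu=\int p(x)^2\, d\mu$, where the interchange of the finite sum and the integral is justified because only finitely many $c_\beta$ are nonzero and each corresponding moment is finite. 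Finally, since $p(x)^2\ge 0$ for all $x\in\reals^n$ and $\mu$ is a positive measure, $\int p^2\, d\mu\ge 0$; hence $v^\top M_d(\mathbf y)\,v\ge 0$. As $v\in\reals^{S_{n,d}}$ and $d\in\mathbb N$ were arbitrary, this proves $M_d(\mathbf y)\succcurlyeq 0$ for every $d$.

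There is essentially no serious obstacle here: the only points requiring care are bookkeeping ones --- making sure the \emph{grevlex} ordering makes the identification of $v$ with the coefficient vector of $p$ unambiguous, and noting that $L_\mathbf{y}$ coincides with integration against $\mu$ on polynomials of degree at most $2d$ precisely because those finitely many moments are assumed to exist and be finite. No positivity or representation theorem is needed; the argument is a one-line computation together with these well-definedness checks.
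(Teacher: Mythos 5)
Your proof is correct and is precisely the standard argument: the paper itself omits a proof of this lemma (citing \cite{r_vol,r_measure_ref 4,r_measure_ref 2} instead), and the identity $v^\top M_d(\mathbf y)\,v=L_\mathbf{y}(p^2)=\int p^2\,d\mu\ge 0$ with $p=\cB_d^\top v$ is exactly how it is established in those references. No gaps; the well-definedness remarks about finiteness of the moments and the grevlex indexing are the only points needing care, and you handle them.
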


\begin{lemma}
\label{mom_bound 1}
Let $\mathbf y = \{y_{\alpha}\}_{\alpha\in\mathbb{N}^n}$ be a real sequence. If $M_d({\bf y}) \succcurlyeq 0$ for some $d\geq 1$, then
\begin{equation*}
\vert y_{\alpha} \vert \leq \max \left\{ y_0, \max_{i=1,\ldots, n} L_{\mathbf{y}}\left( x_i^{2d}\right) \right\}\quad \forall \alpha\in\mathbb{N}^n_{2d}.
\end{equation*}
\end{lemma}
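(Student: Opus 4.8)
The plan is to bound $|y_\alpha|$ for $\alpha\in\mathbb{N}^n_{2d}$ using only the positive semidefiniteness of the moment matrix $M_d(\mathbf{y})$, exploiting the fact that principal $2\times 2$ minors of a PSD matrix are nonnegative. First I would observe that for any index $\alpha\in\mathbb{N}^n_{2d}$ with $\|\alpha\|_1\le 2d$, the monomial $x^\alpha$ can be split as $x^{\alpha'}x^{\alpha''}$ with $\alpha',\alpha''\in\mathbb{N}^n_d$ (split the exponent vector so that each half has $\ell_1$-norm at most $d$; concretely, peel off variables one at a time until roughly half the total degree is reached). Then $y_\alpha = M_d(\mathbf{y})(i,j)$ where $\alpha^{(i)}=\alpha'$, $\alpha^{(j)}=\alpha''$, and the corresponding diagonal entries are $M_d(\mathbf{y})(i,i)=y_{2\alpha'}=L_{\mathbf{y}}(x^{2\alpha'})$ and $M_d(\mathbf{y})(j,j)=y_{2\alpha''}=L_{\mathbf{y}}(x^{2\alpha''})$.

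Next, since $M_d(\mathbf{y})\succcurlyeq 0$, every principal $2\times 2$ submatrix is PSD, hence $y_\alpha^2 = M_d(\mathbf{y})(i,j)^2 \le M_d(\mathbf{y})(i,i)\,M_d(\mathbf{y})(j,j) = L_{\mathbf{y}}(x^{2\alpha'})\,L_{\mathbf{y}}(x^{2\alpha''})$, so $|y_\alpha| \le \max\{L_{\mathbf{y}}(x^{2\alpha'}),\,L_{\mathbf{y}}(x^{2\alpha''})\}$. Therefore it suffices to bound the ``pure even'' diagonal moments $L_{\mathbf{y}}(x^{2\beta})$ for $\beta\in\mathbb{N}^n_d$ by $\max\{y_0,\max_{i=1,\ldots,n}L_{\mathbf{y}}(x_i^{2d})\}$. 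I would prove this by an inductive or direct argument again using $2\times 2$ minors: for a single variable, $L_{\mathbf{y}}(x_i^{2k})^2 \le L_{\mathbf{y}}(x_i^{2k'})L_{\mathbf{y}}(x_i^{2k''})$ for suitable $k',k''$, and iterating pushes the exponent up to $2d$ (using a log-convexity / Cauchy–Schwarz chain on the sequence $L_{\mathbf{y}}(x_i^{0}),L_{\mathbf{y}}(x_i^2),\ldots,L_{\mathbf{y}}(x_i^{2d})$, which is log-convex, hence each term is bounded by the max of its endpoints $y_0$ and $L_{\mathbf{y}}(x_i^{2d})$). For a general multi-index $2\beta$, one reduces to the single-variable case by repeatedly splitting off one variable's contribution via a $2\times 2$ minor, at each stage trading a mixed even moment for a geometric mean of ``more concentrated'' even moments, terminating at pure powers $x_i^{2d}$ or at $x^0$.

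The main obstacle I anticipate is the multivariate reduction step: making the bookkeeping of the splitting rigorous so that each split stays within $\mathbb{N}^n_d$ (so that the relevant entries genuinely appear in $M_d(\mathbf{y})$) and so that the recursion provably terminates with the exponent mass relocated entirely onto a single coordinate raised to power $2d$ (or onto the constant term). One clean way to organize this: show first that for any $\beta\in\mathbb{N}^n_d$, $L_{\mathbf{y}}(x^{2\beta}) \le \max_i L_{\mathbf{y}}(x_i^{2\|\beta\|_1})$ by induction on the number of nonzero components of $\beta$ using the $2\times 2$-minor inequality $L_{\mathbf{y}}(x^{2\beta})^2 \le L_{\mathbf{y}}(x^{2\beta'})L_{\mathbf{y}}(x^{2\beta''})$ with $\beta',\beta''$ supported on fewer coordinates; then apply the single-variable log-convexity bound $L_{\mathbf{y}}(x_i^{2\|\beta\|_1})\le\max\{y_0, L_{\mathbf{y}}(x_i^{2d})\}$ since $\|\beta\|_1\le d$. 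Combining this with the first splitting step gives the claimed bound, and I would note that all steps use only $M_d(\mathbf{y})\succcurlyeq 0$ and no higher-order information, as required. (One subtlety to flag: when $\|\alpha\|_1$ is odd, the split $\alpha=\alpha'+\alpha''$ is uneven, but this causes no problem since we only need $\alpha',\alpha''\in\mathbb{N}^n_d$, which holds as long as each part has $\ell_1$-norm at most $d$, achievable whenever $\|\alpha\|_1\le 2d$.)
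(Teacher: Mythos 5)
The paper does not actually prove Lemma~\ref{mom_bound 1}; it is quoted as a standard fact with citations (it is essentially Lemma~4.1 of Lasserre--Netzer, reproduced in Laurent's survey), so your attempt can only be judged on its own merits. Your overall architecture is the right one and matches the standard proof: (i) for $\alpha\in\mathbb{N}^n_{2d}$ split $\alpha=\alpha'+\alpha''$ with $\alpha',\alpha''\in\mathbb{N}^n_d$ and use the $2\times2$ principal minor to get $y_\alpha^2\le y_{2\alpha'}y_{2\alpha''}$, reducing everything to bounding the diagonal entries $y_{2\beta}$, $\beta\in\mathbb{N}^n_d$; and (iii) the univariate log-convexity step $y_{2ke_i}^2\le y_{2(k-1)e_i}y_{2(k+1)e_i}$, giving $y_{2ke_i}\le\max\{y_0,y_{2de_i}\}$, is correct (modulo the trivial care needed when some term vanishes).

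The genuine gap is in your step (ii), the claim $L_{\mathbf y}(x^{2\beta})\le\max_i L_{\mathbf y}(x_i^{2\norm{\beta}_1})$ proved ``by induction on the number of nonzero components of $\beta$ using $L_{\mathbf y}(x^{2\beta})^2\le L_{\mathbf y}(x^{2\beta'})L_{\mathbf y}(x^{2\beta''})$ with $\beta',\beta''$ supported on fewer coordinates.'' That inequality requires $\beta'+\beta''=2\beta$ with \emph{both} $\beta',\beta''\in\mathbb{N}^n_d$, and such a split with both parts on strictly smaller support need not exist. Concretely, take $n=2$, $d=3$, $\beta=(2,1)$: the only decompositions of $(4,2)$ into two multi-indices of degree at most $3$ are $(3,0)+(1,2)$ and $(2,1)+(2,1)$, and in neither do both parts have smaller support, so your inductive step is vacuous here (the naive support split $(4,0)+(0,2)$ leaves $\mathbb{N}^2_3$). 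The intermediate claim is still true, but it needs a different mechanism: either combine the resulting cyclic chain of Cauchy--Schwarz inequalities algebraically (here $y_{(4,2)}^2\le y_{(6,0)}y_{(2,4)}$ and $y_{(2,4)}^2\le y_{(0,6)}y_{(4,2)}$ give $y_{(4,2)}^3\le y_{(6,0)}^2y_{(0,6)}$), or, more cleanly, argue by an extremal principle on the finite set of diagonal entries: let $V=\max_{\beta\in\mathbb{N}^n_d}y_{2\beta}>0$ and pick a maximizer $\beta$ with $\norm{\beta}_1$ largest and then $\max_k\beta_k$ largest; the splits $2\beta=(\beta-e_i)+(\beta+e_i)$ (when $\norm{\beta}_1<d$) and $2\beta=(\beta+e_i-e_j)+(\beta-e_i+e_j)$ (when $\norm{\beta}_1=d$ with two nonzero coordinates) force equality throughout and contradict the extremal choice, so the maximum is attained at $\beta=\mathbf{0}$ or $\beta=de_i$. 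With that replacement your proof closes; as written, the concentration step is not established.
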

\begin{lemma}
\label{lem:suf_cond_prob}
If there exist a constant $c > 0$ such that $M_d(\mathbf y)\succcurlyeq 0$ and $|y_{\alpha}| \leq c$ for all $ d\in \mathbb N$ and $\alpha\in\mathbb{N}^n$, then there exists a representing measure $\mu$ with support on $[-1, 1]^n$.
\end{lemma}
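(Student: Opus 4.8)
The plan is to prove Lemma~\ref{lem:suf_cond_prob} by invoking the classical moment-problem machinery (the multivariate Hausdorff / Haviland theorem together with Putinar-type reasoning, as found in the cited references) while using the two hypotheses---positive semidefiniteness of all moment matrices and the uniform bound $|y_\alpha|\le c$---to control where the representing measure can be supported. The key point is that $M_d(\mathbf y)\succcurlyeq 0$ for \emph{all} $d$ gives, by itself, a representing measure on $\reals^n$ (this is essentially Lemma~\ref{lem:nec_cond} run in reverse, i.e., a multivariate full moment problem statement), but without further information that measure could have unbounded support; the boundedness of the sequence is exactly what forces the support into $[-1,1]^n$.

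\textbf{Step 1: Existence of a representing measure on $\reals^n$.} First I would argue that the linear functional $L_\mathbf{y}$ defined in~\eqref{eq:lin_map} is nonnegative on the cone of polynomials that are nonnegative on $\reals^n$. The positivity $M_d(\mathbf y)\succcurlyeq 0$ for all $d$ says precisely that $L_\mathbf{y}(h^2)\ge 0$ for every $h\in\reals[x]$, i.e., $L_\mathbf{y}$ is nonnegative on all SOS polynomials. To upgrade this to nonnegativity on \emph{all} globally nonnegative polynomials, one uses that $L_\mathbf{y}$ together with the uniform bound behaves like integration against a finite measure: the standard device (see \cite{r_measure_ref 2}) is to add a small multiple of a fixed positive-definite form and approximate; alternatively one appeals directly to the result in the cited literature (Berg--Christensen--Jensen / Haviland) which states that boundedness of the moment sequence plus the SOS-positivity of $L_\mathbf{y}$ yields a representing measure. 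Then by Haviland's theorem there exists a finite positive Borel measure $\mu$ on $\reals^n$ with $y_\alpha=\int x^\alpha\,d\mu$ for all $\alpha$.

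\textbf{Step 2: Localizing the support to $[-1,1]^n$.} This is the step I expect to carry the real content. Having a representing measure $\mu$, I would show $supp(\mu)\subset[-1,1]^n$ by contradiction: suppose $\mu$ assigns positive mass to the set $\{x:\ x_i>1\}$ for some coordinate $i$ (the case $x_i<-1$ is symmetric). Then for large even powers, $\int x_i^{2d}\,d\mu$ grows without bound --- more precisely, if $\mu(\{x_i\ge 1+\delta\})=:\eta>0$ for some $\delta>0$, then $y_{2d e_i}=\int x_i^{2d}\,d\mu\ge \eta(1+\delta)^{2d}\to\infty$ as $d\to\infty$, contradicting $|y_{2de_i}|\le c$ for all $d$. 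Hence $\mu$ charges no set of the form $\{|x_i|\ge 1+\delta\}$ for any $\delta>0$ and any $i$, so $\mu(\{x:\ |x_i|>1\})=0$ for each $i$, and therefore $supp(\mu)\subset[-1,1]^n$. (A minor technical point is that one should phrase the contradiction using $\mu(\{x_i\ge 1+\delta\})>0$ for \emph{some} rational $\delta>0$, which follows from $\mu(\{x_i>1\})>0$ by countable additivity.)

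\textbf{Main obstacle.} The delicate part is Step~1: turning ``$L_\mathbf{y}$ nonnegative on SOS'' into ``$L_\mathbf{y}$ nonnegative on all nonnegative polynomials,'' since in dimension $n\ge 2$ there are nonnegative polynomials that are not SOS (Motzkin), so the implication is not automatic and genuinely requires the uniform boundedness hypothesis (this is where $|y_\alpha|\le c$ is used, beyond Step~2). Rather than reprove this, I would cite the relevant moment-problem results from \cite{r_vol,r_measure_ref 4,r_measure_ref 2}, as the lemma statement itself indicates, and keep the self-contained argument focused on the clean Step~2 support-localization estimate. Once Step~1 is granted, everything else is the short growth-rate argument above.
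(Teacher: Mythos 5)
The paper does not actually prove this lemma: it is stated in the preliminaries as a known result, with the reader referred to the cited moment-problem literature (Lasserre's book and related references). So there is no in-paper argument to compare against; what can be judged is whether your reconstruction is a faithful and correct account of the standard proof, and it essentially is. Your Step~2 is complete and correct: for a representing measure $\mu$, if $\mu(\{x:\ |x_i|>1\})>0$ then by countable additivity $\mu(\{x:\ |x_i|\geq 1+\delta\})=\eta>0$ for some $\delta>0$, whence $y_{2d e_i}=\int x_i^{2d}\,d\mu\geq \eta(1+\delta)^{2d}\to\infty$, contradicting $|y_\alpha|\leq c$; hence $supp(\mu)\subset[-1,1]^n$. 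You also correctly identify the genuinely hard point, namely that for $n\geq 2$ positivity of $L_\mathbf{y}$ on squares does not by itself imply positivity on all globally nonnegative polynomials (Motzkin), so that PSD moment matrices alone do not solve the full multidimensional moment problem and the uniform bound $|y_\alpha|\leq c$ is indispensable already in Step~1, not only in Step~2. Deferring that step to the cited literature is exactly what the paper itself does for the whole lemma, so this is not a gap relative to the paper.

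One remark on economy: the cleanest single citation here is the Berg--Christensen--Jensen type result (this is the form in which the lemma appears in the references the paper points to), which states directly that a positive semidefinite \emph{and uniformly bounded} multisequence is the moment sequence of a measure supported on $[-1,1]^n$. Quoting that result makes your Step~2 redundant, since the support localization comes for free. Your two-step decomposition (representing measure on $\reals^n$ first, then localize) is also valid, but be aware that the intermediate claim ``bounded $+$ PSD $\Rightarrow$ representing measure on $\reals^n$'' is not a weaker or more elementary statement than the lemma itself --- it is the same theorem minus the easy part --- so the decomposition does not actually reduce the burden of what must be imported from the literature.
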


Given polynomials $\mathcal{P}_j\in \mathbb R [x]$, let $\textbf{p}_j$ be its coefficient sequence in standard monomial basis for $j=1,2,\dots ,\ell$; consider the semialgebraic set $\cK$ defined as
\begin{equation}\label{preliminary result_semi algebraic set}
\cK = \{ x\in \mathbb{R}^n: \mathcal{P}_j(x)\geq0,\ j=1,2,\dots ,\ell\ \}.
\end{equation}
The following lemma gives a necessary and sufficient condition for $\mathbf y$ to have a representing measure $\mu$ supported on $\cK$ -- see \cite{r_vol,r_measure_ref 4,r_measure_ref 1,r_measure_ref 2}.
\begin{lemma}
\label{preliminary result_measure 2}
If $\cK$ defined in \eqref{preliminary result_semi algebraic set} satisfies Putinar's property, 
then the sequence $\mathbf y = \{ y_\alpha\}_{\alpha\in\mathbb{N}^n}$ has a \emph{representing} finite Borel measure $\mu$ on the set $\cK$, if and only if
\begin{equation*}
M_d(\mathbf y)\succcurlyeq 0,\quad M_d(\mathbf y;\textbf{p}_j)\succcurlyeq 0,\ \ j=1,\dots ,\ell, \hbox{ for all }  d\in \mathbb N.
\end{equation*}
\end{lemma}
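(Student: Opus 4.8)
The plan is to prove this classical characterization (essentially Putinar's theorem for moment sequences, as in Lasserre and in \cite{r_vol}) in two directions. The necessity direction is the easy one: if $\mathbf y$ is the moment sequence of a finite Borel measure $\mu$ supported on $\cK$, then Lemma~\ref{lem:nec_cond} already gives $M_d(\mathbf y)\succcurlyeq 0$ for all $d$. For the localizing matrices, I would verify directly that for any vector $v\in\reals^{S_{n,d}}$ corresponding to a polynomial $h\in\reals_d[x]$ with coefficient vector $v$, one has $v^T M_d(\mathbf y;\mathbf p_j)\,v = L_{\mathbf y}\big(\mathcal P_j\, h^2\big) = \int_{\cK} \mathcal P_j(x)\,h(x)^2\,d\mu(x)\geq 0$, since $\mathcal P_j\geq 0$ on $supp(\mu)\subset\cK$ and $h^2\geq0$. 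This uses only the definition \eqref{localization matrix def} rewritten as $M_d(\mathbf y;\mathbf p_j)=L_{\mathbf y}(\mathcal P_j\,\cB_d\cB_d^T)$ and linearity of $L_{\mathbf y}$ together with the representing property $y_\alpha=\int x^\alpha d\mu$.

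For the sufficiency direction — the substantive part — suppose the PSD conditions hold for all $d$. The idea is to build the measure from the linear functional $L_{\mathbf y}$ via a Riesz-type representation argument, using Putinar's property to control positivity. Concretely, Putinar's property supplies $\mathcal U = s_0 + \sum_{j=1}^\ell s_j\mathcal P_j$ with $\{\mathcal U\geq0\}$ compact and $s_j$ SOS; one first shows $L_{\mathbf y}$ is nonnegative on the quadratic module generated by $\{\mathcal P_j\}$, because $L_{\mathbf y}(s\,\mathcal P_j)\geq0$ for SOS $s$ follows from $M_d(\mathbf y;\mathbf p_j)\succcurlyeq0$ for all $d$ (write $s=\sum_k h_k^2$ and sum the inequalities $v_k^T M_d(\mathbf y;\mathbf p_j)v_k\geq0$), and similarly $L_{\mathbf y}(s)\geq0$ from $M_d(\mathbf y)\succcurlyeq0$. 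Then Putinar's Positivstellensatz (or rather its dual, the representation theorem for functionals nonnegative on an archimedean module, which is exactly the content of Lemma~\ref{preliminary result_measure 2} in \cite{r_measure_ref 2,r_measure_ref 6}) yields a representing measure $\mu$ on $\cK$. Alternatively, and perhaps cleaner given what is available in the excerpt: the PSD moment matrix conditions plus the boundedness that Putinar's property forces (via $\mathcal U\geq0$ compact, one derives a uniform bound $|y_\alpha|\leq c$ after rescaling $\cK$ into $[-1,1]^n$, invoking Lemma~\ref{mom_bound 1}) let one apply Lemma~\ref{lem:suf_cond_prob} to get a representing measure $\mu$ on $[-1,1]^n$; one then argues $supp(\mu)\subset\cK$ because $\int \mathcal P_j\,h^2\,d\mu = v^T M_d(\mathbf y;\mathbf p_j)v\geq0$ for every $h$, which forces $\mathcal P_j\geq0$ $\mu$-almost everywhere, hence on $supp(\mu)$, for each $j$.

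The main obstacle is the sufficiency direction's passage from "all localizing and moment matrices are PSD" to "a representing measure on $\cK$ exists" — this is genuinely Putinar's theorem and is where the compactness hypothesis (Putinar's property) is indispensable; without it the conclusion can fail. In the write-up I would not reprove Putinar from scratch but cite \cite{r_measure_ref 2,r_measure_ref 6} (and \cite{r_vol}) for this step, and spend the detailed work on (i) the componentwise identity $v^T M_d(\mathbf y;\mathbf p_j)v = L_{\mathbf y}(\mathcal P_j h^2)$, (ii) the SOS-summation argument extending PSD-ness of the matrices to nonnegativity of $L_{\mathbf y}$ on the quadratic module, and (iii) the a.e.-nonnegativity argument pinning $supp(\mu)$ inside $\cK$. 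A secondary technical point worth care is that the hypothesis is stated "for all $d\in\mathbb N$," so the truncated matrices are consistent across $d$ and genuinely come from one sequence $\mathbf y$; this is what lets the module argument close.
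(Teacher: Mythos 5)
The paper states this lemma as a known background result and gives no proof of its own, simply citing \cite{r_vol,r_measure_ref 4,r_measure_ref 1,r_measure_ref 2}; your sketch is the standard argument behind that citation and is sound. Your necessity direction (the identity $v^T M_d(\mathbf y;\mathbf p_j)v=L_{\mathbf y}(\cP_j h^2)=\int_{\cK}\cP_j h^2\,d\mu\geq 0$) and your primary sufficiency route (nonnegativity of $L_{\mathbf y}$ on the quadratic module via the SOS summation, then Putinar's representation theorem, which you rightly cite rather than reprove) are exactly what the referenced sources do. The only place I would add a word of caution is your alternative sufficiency route: invoking Lemma~\ref{mom_bound 1} and Lemma~\ref{lem:suf_cond_prob} requires a bound on $L_{\mathbf y}(x_i^{2d})$ that is \emph{uniform in} $d$, and obtaining it needs $1-\norm{x}_2^2$ (after rescaling) to lie in the quadratic module so that $L_{\mathbf y}\bigl((1-x_i^2)x_i^{2d-2}\bigr)\geq 0$ can be iterated; this membership is a consequence of the Archimedean character of Putinar's property but is itself a nontrivial fact, so that route quietly relies on the same machinery it tries to avoid. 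The concluding support argument (a.e.\ nonnegativity of each $\cP_j$ forcing $supp(\mu)\subset\cK$) is correct as stated.
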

Finally, the following lemma, proven in~\cite{r_vol}, shows that the Borel measure of a compact set is equal to the optimal value of an infinite dimensional LP problem.
\begin{lemma}
\label{preliminary result_volume}
Let $\Sigma$ be the Borel $\sigma$-algebra on $\reals^n$, and $ \mu_1$ be a measure on a compact set $\cB\in\Sigma$. Then for any given $\cK\in\Sigma$ such that $\cK\subseteq  \cB$, one has
\begin{equation*}
\mu_1(\cK)= \int_{\cK} d\mu_1 = \sup_{\mu_2\in\cM(\cK)} \left\lbrace  \int d\mu_2 : \mu_2 \preccurlyeq \mu_1\right\rbrace,
\end{equation*}
where $\cM(\cK)$ is the set of finite Borel measures on $\cK$.
\end{lemma}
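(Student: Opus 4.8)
The plan is to show the two inequalities $\mu_1(\cK)\geq \sup_{\mu_2}\{\int d\mu_2:\ \mu_2\preccurlyeq\mu_1\}$ and $\mu_1(\cK)\leq \sup_{\mu_2}\{\int d\mu_2:\ \mu_2\preccurlyeq\mu_1\}$ separately, and to observe that the supremum is in fact attained. For the first inequality, let $\mu_2\in\cM(\cK)$ be any feasible measure, so $supp(\mu_2)\subset\cK$ and $\mu_2(S)\leq\mu_1(S)$ for every $S\in\Sigma$. Since $supp(\mu_2)\subset\cK$, the total mass satisfies $\int d\mu_2 = \mu_2(\reals^n)=\mu_2(\cK)$, and applying the domination $\mu_2\preccurlyeq\mu_1$ with $S=\cK$ gives $\mu_2(\cK)\leq\mu_1(\cK)$. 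Taking the supremum over all feasible $\mu_2$ yields $\sup_{\mu_2}\{\int d\mu_2\}\leq\mu_1(\cK)$.

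For the reverse inequality, I would exhibit an explicit feasible $\mu_2$ that attains the value $\mu_1(\cK)$. The natural candidate is the restriction of $\mu_1$ to $\cK$, namely $\mu_2(S):=\mu_1(S\cap\cK)$ for $S\in\Sigma$; equivalently $\mu_2 = \mathbf 1_{\cK}\,d\mu_1$. One checks that $\mu_2$ is a finite (positive) Borel measure (finiteness follows from $\mu_2(\reals^n)=\mu_1(\cK)\leq\mu_1(\cB)<\infty$, using that $\cB$ is compact and $\cK\subseteq\cB$), that $\mu_2(S)=\mu_1(S\cap\cK)\leq\mu_1(S)$ so $\mu_2\preccurlyeq\mu_1$, and that $supp(\mu_2)\subset\cK$ because $\cK$ is closed (being compact) and $\mu_2$ assigns zero mass to the open set $\reals^n\setminus\cK$. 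Hence $\mu_2\in\cM(\cK)$ is feasible, and $\int d\mu_2 = \mu_2(\reals^n)=\mu_1(\cK)$, which shows $\sup_{\mu_2}\{\int d\mu_2\}\geq\mu_1(\cK)$ and simultaneously that the supremum is attained.

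The only point requiring a little care is the claim $supp(\mu_2)\subset\cK$: one must use that $\cK$ is closed so that its complement is open, and that a Borel measure assigns zero mass to the complement of $\cK$ iff every point outside $\cK$ has a neighborhood of zero measure; since $\cK$ compact and contained in $\cB$ guarantees $\cK$ is closed, this is immediate. I do not expect any genuine obstacle here — the statement is essentially a bookkeeping identity about restricting a measure to a closed set — but the cleanest exposition is to first prove the easy direction by domination, then verify the four properties (positive, finite, dominated, supported in $\cK$) of the restricted measure $\mathbf 1_{\cK}\,d\mu_1$ to establish the reverse inequality and attainment at once.
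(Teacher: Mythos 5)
Your proof is correct and is essentially the standard argument (the paper itself gives no proof of this lemma, deferring to the cited reference, which uses the same restriction-measure construction): the domination direction gives $\leq$, and the restriction $\mu_2(S)=\mu_1(S\cap\cK)$ is feasible and attains $\mu_1(\cK)$. The one point to be careful about is that the lemma as stated only assumes $\cK\in\Sigma$ is a Borel subset of the compact set $\cB$, whereas your verification of $supp(\mu_2)\subset\cK$ uses that $\cK$ itself is closed; this is harmless in the paper, where $\cK$ is always a compact semialgebraic set, but for a general Borel $\cK$ one would instead approximate $\cK$ from within by compact sets via inner regularity, obtaining the supremum without claiming attainment.
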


\section{Chance Optimization over a Semialgebraic Set}
\label{sec:intersection_prob}
In this section we focus on the \emph{chance optimization} problem stated in~\eqref{intro_max}. We first provide an equivalent problem over finite (positive) Borel measures as variables, and then we will consider its relaxations in the moment space. 
Given polynomials $\mathcal{P}_j:\reals^n\times\reals^m\rightarrow\reals$ with degree $\delta_j$ for $j=1,\ldots,\ell$,  we define
\begin{equation}\label{max_p2_set}
\mathcal K =\{(x,q)\in\reals^n\times\reals^m:\ \mathcal{P}_j(x,q) \geq 0,\ j=1,2,\dots,\ell\}.
\end{equation}

\begin{assumption}
\label{assump:putinar}
$\mathcal K$ satisfies Putinar's property.
\end{assumption}

\begin{remark}
\label{rem:prob_space}
Assumption~\ref{assump:putinar} implies that $\cK$ is a compact set; hence the projections of $\cK$ onto $x$-coordinates and onto $q$-coordinates, i.e., $\Pi_1=:\{x\in\reals^n:\ \exists q\in\reals^m \hbox{ s.t. } (x,q)\in\cK\}$ and $\Pi_2=:\{q\in\reals^m:\ \exists x\in\reals^n \hbox{ s.t. } (x,q)\in\cK\}$, are also compact. Therefore, after rescaling of polynomials, we assume without loss of generality that $\Pi_1\subset\chi:=[-1,1]^n$ and $\Pi_2\subset\cQ:=[-1,1]^m$. Furthermore, instead of working on the original probability space $(\reals^m,\bar{\Sigma}_q,\bar{\mu}_q)$, we can adopt a smaller probability space $(\cQ,\Sigma_q,\mu_q)$, where $\Sigma_q:=\{S\cap\cQ:\ S\in\bar{\Sigma}_q\}$ and $\mu_q(S):=\frac{\bar{\mu}_q(S)}{\bar{\mu}_q(\cQ)}$ for all $S\in\Sigma_q$. Therefore, we can take for granted that $\mu_q\in\cM(\cQ)$,
where $\cM(\cQ)$ is the set of finite Borel measures $\mu_q$ such that $supp(\mu_q) \subset \mathcal{Q}$.
We also assume that moments of any order of $\mu_q$ can be computed.
\end{remark}
\subsection{An Equivalent Problem}
As an intermediate step in the development of convex relaxations of the original problem, a related infinite dimensional problem in the measure space is provided below: 
\begin{align}
\mathbf{P_{\mu_q}^*}:=&\ \sup_{\mu ,\mu_x} \int d\mu, \label{max_p2}\\
\hbox{s.t.}\quad & \mu \preccurlyeq \mu_x \times \mu_q, \label{eq:mesuare_ineq}\subeqn\\
&\mu_x \hbox{ is a probability measure} \label{eq:prob_measure_const}, \subeqn\\
&\mu_x\in \cM(\chi),\quad \mu\in\cM(\mathcal K).  \label{eq:support_const}\subeqn
\end{align}
\vspace{-0.3cm}
\begin{theorem} \label{Max Theo 1}
The optimization problems in \eqref{intro_max} and \eqref{max_p2} are equivalent in the following sense:
\begin{enumerate}[i)]
\item The optimal values are the same, i.e., $\mathbf{P^*}=\mathbf{P_{\mu_q}^*}$.
\item If an optimal solution to \eqref{max_p2} exists, call it $\mu^*_x$, then any $x^*\in supp(\mu^*_x)$ is an optimal solution to \eqref{intro_max}.
\item If an optimal solution to \eqref{intro_max} exists, call it $x^*$, then $\mu_x = \delta_{x^*}$, Dirac measure at $x^*$, and $\mu = \delta_{x^*} \times \mu_q$ is an optimal solution to \eqref{max_p2}.
\end{enumerate}
\end{theorem}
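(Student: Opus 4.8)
The plan is to establish the three claims in sequence, using Lemma~\ref{preliminary result_volume} as the main engine to relate the two optimal values. First I would prove the inequality $\mathbf{P_{\mu_q}^*} \geq \mathbf{P^*}$ together with part (iii): given any feasible $x$ for \eqref{intro_max}, set $\mu_x = \delta_x$ and $\mu = \delta_x \times \mu_q$ restricted to $\mathcal{K}$. Since $\Pi_1 \subset \chi$, $\delta_x \in \cM(\chi)$ is a probability measure, and since $\mathrm{supp}(\delta_x \times \mu_q) \subset \{x\} \times \cQ$, intersecting with $\mathcal{K}$ keeps the support inside $\mathcal{K}$; the measure inequality $\mu \preccurlyeq \mu_x \times \mu_q$ holds because restriction to $\mathcal{K}$ only removes mass. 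Then $\int d\mu = (\delta_x \times \mu_q)(\mathcal{K}) = \mu_q(\{q : (x,q) \in \mathcal{K}\}) = \mu_q(\{q : \mathcal{P}_j(x,q) \geq 0,\ j=1,\ldots,\ell\})$, which is exactly the objective of \eqref{intro_max} evaluated at $x$. Taking suprema gives $\mathbf{P_{\mu_q}^*} \geq \mathbf{P^*}$, and if $x^*$ is optimal for \eqref{intro_max} this construction attains $\mathbf{P^*}$, so it is optimal for \eqref{max_p2} — establishing (iii) modulo the reverse inequality.

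Next I would prove $\mathbf{P_{\mu_q}^*} \leq \mathbf{P^*}$. Take any feasible triple $(\mu, \mu_x)$ for \eqref{max_p2}. The key idea is a disintegration / slicing argument: because $\mu \preccurlyeq \mu_x \times \mu_q$, for $\mu_x$-almost every $x$ one can consider the conditional behavior of $\mu$ on the slice $\{x\} \times \cQ$. More precisely, define the marginal-type object and use Lemma~\ref{preliminary result_volume} with $\mu_1 = \mu_x \times \mu_q$ on the compact set $\cB = \chi \times \cQ$ and $\mathcal{K}$ as the semialgebraic set: this gives that among all $\mu_2 \preccurlyeq \mu_x \times \mu_q$ supported on $\mathcal{K}$, the maximum of $\int d\mu_2$ is $(\mu_x \times \mu_q)(\mathcal{K})$. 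By Fubini, $(\mu_x \times \mu_q)(\mathcal{K}) = \int_\chi \mu_q(\{q : (x,q) \in \mathcal{K}\})\, d\mu_x(x) \leq \int_\chi \mathbf{P^*}\, d\mu_x = \mathbf{P^*}$, using that $\mu_x$ is a probability measure and that $\mu_q(\{q : (x,q)\in\mathcal{K}\}) \leq \mathbf{P^*}$ for every $x$ by definition of the supremum in \eqref{intro_max}. Hence $\int d\mu \leq \mathbf{P^*}$, and taking the supremum over feasible $(\mu,\mu_x)$ yields $\mathbf{P_{\mu_q}^*} \leq \mathbf{P^*}$. Combined with the first step this proves (i).

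For part (ii), suppose $\mu_x^*$ (with companion $\mu^*$) is optimal, so $\int d\mu^* = \mathbf{P^*}$. Re-examining the chain of inequalities in the previous paragraph, equality $\int d\mu^* = \mathbf{P^*}$ forces $\int_\chi \mu_q(\{q:(x,q)\in\mathcal{K}\})\, d\mu_x^*(x) = \mathbf{P^*} = \int_\chi \mathbf{P^*}\, d\mu_x^*(x)$, so the nonnegative integrand $\mathbf{P^*} - \mu_q(\{q:(x,q)\in\mathcal{K}\})$ has zero integral against $\mu_x^*$, hence vanishes $\mu_x^*$-a.e. Thus the set $G := \{x : \mu_q(\{q : \mathcal{P}_j(x,q)\geq 0,\ j=1,\ldots,\ell\}) = \mathbf{P^*}\}$ has full $\mu_x^*$-measure. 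The remaining point is to pass from "full measure" to "every point of the support": here I would use that $x \mapsto \mu_q(\{q : (x,q)\in\mathcal{K}\})$ need not be continuous, so I instead argue that $G^c$ is $\mu_x^*$-null and, since $\mathrm{supp}(\mu_x^*)$ is the smallest closed set of full measure, any $x^* \in \mathrm{supp}(\mu_x^*)$ must be a limit of points in $G$; to conclude $x^* \in G$ one needs upper semicontinuity of $x \mapsto \mu_q(\{q:(x,q)\in\mathcal{K}\})$, which follows from the Portmanteau theorem applied to the closed set $\mathcal{K}$ (the preimage slices are closed and vary upper semicontinuously because $\mathcal{K}$ is closed and $\cQ$ is compact). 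The main obstacle I anticipate is exactly this last semicontinuity/support step — making rigorous that optimality is inherited by \emph{every} support point rather than almost every point — and it is where the compactness from Assumption~\ref{assump:putinar} and the closedness of $\mathcal{K}$ are essential.
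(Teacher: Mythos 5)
Your proposal is correct and, for part (i), follows essentially the same route as the paper: the paper also reduces \eqref{intro_max} to the auxiliary problem $\sup_{\mu_x}\int_\chi \mu_q(\cF(x))\,d\mu_x$ over probability measures $\mu_x\in\cM(\chi)$, bounds it above by $\mathbf{P^*}$ pointwise, attains it with Dirac measures, and then invokes Fubini together with Lemma~\ref{preliminary result_volume} (with $\mu_1=\mu_x\times\mu_q$) to identify the inner supremum over $\mu\preccurlyeq\mu_x\times\mu_q$ supported on $\cK$ with $(\mu_x\times\mu_q)(\cK)$. Your ordering of the two inequalities is slightly different but the ingredients are identical.

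Where you go beyond the paper is in parts (ii) and (iii), which the paper's proof leaves implicit. Your observation that $\mu=\delta_{x^*}\times\mu_q$ must be \emph{restricted} to $\cK$ to satisfy the support constraint $\mu\in\cM(\cK)$ is a genuine (minor) correction to the statement of (iii). For part (ii), your argument is the right one and is sound: equality in the chain forces $\mu_q(\cF(x))=\mathbf{P^*}$ for $\mu_x^*$-a.e.\ $x$, every point of $supp(\mu_x^*)$ is a limit of such $x$, and the conclusion for \emph{every} support point then requires upper semicontinuity of $x\mapsto\mu_q(\cF(x))$. That semicontinuity does hold here, though the cleanest justification is not Portmanteau but rather: since $\cK$ is closed and $\cQ$ is compact, for any open $U\supset\cF(x)$ one has $\cF(x_n)\subset U$ for all large $n$ (otherwise a convergent subsequence of points in $\cF(x_{n})\setminus U$ would produce a limit in $\cF(x)\setminus U$), whence $\limsup_n\mu_q(\cF(x_n))\leq\mu_q(U)$, and outer regularity of the finite Borel measure $\mu_q$ on the closed set $\cF(x)$ finishes the argument. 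With that substitution your proof is complete and in fact more thorough than the one in the paper.
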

\begin{proof}
Let $(\cQ,\Sigma,\mu_q)$ be the probability space defined in Remark~\ref{rem:prob_space}.
Note that since $\mathcal{P}_j(x,q)$ is a polynomial in random vector $q\in\reals^m$ for all $x\in\reals^n$, it is continuous in $q$; hence $\mathcal{P}_j(x,.)$ is Borel measurable for all $x\in\reals^n$ and $j=1,\ldots,\ell$. As discussed in Remark~\ref{rem:prob_space}, it can be assumed that $\cK\subset\chi\times\cQ=[-1,1]^n\times[-1,1]^m$. Define $\mathcal F:\mathbb R^n\rightarrow\Sigma$ as follows
\begin{equation}\label{eq:F_func}
\mathcal F(x) := \{q\in\reals^m:\mathcal{P}_j(x,q)\geq0,\ j=1,2,\dots ,\ell\},
\end{equation}
and consider the following problem over the probability measures in $\cM(\chi)$:
\begin{equation}\label{eq:aux_problem}
\mathbf{P}:=\sup_{\mu_x\in\cM(\chi)}\left\{ \int_\chi \mu_q (\mathcal F(x))~d \mu_x:\ \mu_x(\chi)=1\right\}.
\end{equation}

Note that the optimal value of \eqref{intro_max} can be written as $\mathbf{P^*}=\sup_{x\in\chi}\mu_q(\cF(x))$. Let $\mu_x$ be a feasible solution to \eqref{eq:aux_problem}. Since $\mu_q(\mathcal F(x)) \le \mathbf{P^*}$ for all $x \in \chi$, we have $\int \mu_q (\mathcal F(x))~d\mu_x \le \mathbf{P^*}$. Thus, $\mathbf{P} \le \mathbf{P^*}$. Conversely, let $x\in\reals^n$ be a feasible solution to the problem in \eqref{intro_max} and $\delta_{x}$ denote the Dirac measure at $x$. The objective value of $x$ in \eqref{intro_max} is equal to $\mu_q (\mathcal F(x))$. 
Moreover, $\mu_x = \delta_{x}$ is a feasible solution to the problem in \eqref{max_p2} with objective value equal to $\mu_q (\mathcal F(x))$. This implies that $\mathbf{P^*} \le \mathbf{P}$. Hence, $\mathbf{P^*} = \mathbf{P}$, and 
\eqref{eq:aux_problem} can be rewritten as
\begin{equation} \label{max_p2_proof_p2_2}
\mathbf{P^*}=\sup_{\mu_x\in\cM(\chi)}\left\{\int_{\chi} \int_{\mathcal{F}(x)} d\mu_q d\mu_x:\ \mu_x(\chi)=1\right\} = \sup_{\mu_x\in\cM(\chi)}\left\{\int_{\mathcal K} d\mu_x \mu_q:\ \mu_x(\chi)=1\right\},
\end{equation}
and using the epigraph formulation shown in Lemma~\ref{preliminary result_volume}, we finally obtain 
\begin{equation*} \label{max_p2_proof_p2_3}
\mathbf{P^*}=\sup_{\mu_x\in\cM(\chi)} \sup_{\mu\in\cM(\cK)} \int d\mu \quad \hbox{s.t.}\quad \mu \preccurlyeq \mu_x \times \mu_q,\ \mu_x(\chi)=1.
\end{equation*}
Therefore, $\mathbf{P^*} = \mathbf{P_{\mu_q}^*}$.
\end{proof}

\begin{figure}
  \centering
  \includegraphics[keepaspectratio=true,scale=0.75]{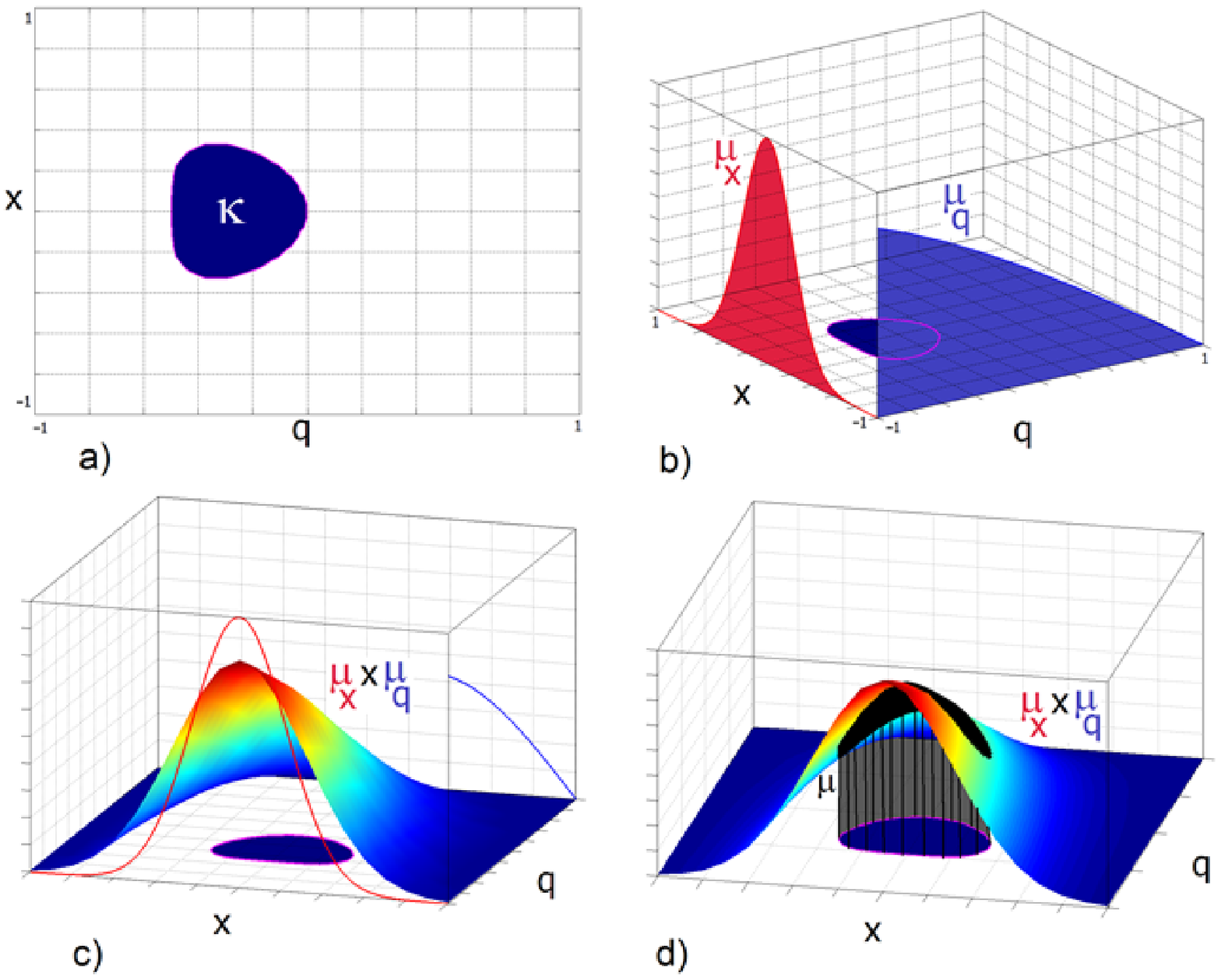}
\caption{\textbf{a)} Simple chance optimization problem over semialgebraic set $\mathcal K$ with random parameter $q$, and decision variable $x$, \textbf{b)} Equivalent problem in the measure space over probability measure $\mu_x$ as variable for given probability measure $\mu_q$, \textbf{c)} Probability of given semi algebraic set $\mathcal K$ for a fixed $\mu_x$ is equal to the integral of $\cK$ with respect to the measure $\mu_x \times \mu_q$, \textbf{d)} The probability is equal to the volume of the measure $\mu$ which is supported on the set $\mathcal K$ and has the same distribution as the measure $\mu_x \times \mu_q$ over its support}
\label{fig chance }
\vspace{-0.75cm}
\end{figure}

As an example, consider the following chance constrained problem corresponding to the semialgebraic set $\mathcal K$, displayed in Fig.1.a, in the space of $(x,q)\in\reals\times\reals$. Our objective is to compute an optimal decision $x^*$ that attains 
$\mathbf{P^*}=\sup_{x\in[-1,1]}\mu_q(\cF(x))$, in presence of random variable $q$ with known probability measure $\mu_q$ supported on $[-1,1]$. In other words, $x^*$ should be chosen such that the probability of the random point $(x^*,q)$ belonging to $\mathcal K$ becomes maximum. Fig.1.b shows the problem in the measure space, where a probability measure $\mu_x$ is assigned to decision variable $x$. If $x\in[-1,1]$ is chosen randomly according to fixed $\mu_x$, then to calculate the probability of the random event $(x,q)\in\cK$, one should compute an integral with respect to measure $\mu_x \times \mu_q$ over the set $\mathcal K$ as in~\eqref{max_p2_proof_p2_2} -- see (Fig.1.c). This integral is equal to the volume of a measure which is supported on $\mathcal K$ and has the same distribution as $\mu_x \times \mu_q$ on $\cK$ -- see (Fig.1.d). Hence, for fixed $\mu_x$, one needs to look for the measure $\mu$ supported on $\mathcal K$ with maximum volume, and bounded above with measure $\mu_x \times \mu_q$. Therefore, searching for $\mu_x$ and $\mu$ simultaneously leads to the optimization problem~\eqref{max_p2} in the measure space.
\subsection{ Semidefinite Relaxations}
In this section, we provide an infinite dimensional SDP of which feasible region is defined over real sequences in $\seq$. Unlike the problem \eqref{max_p2} in which we are looking for measures, in the SDP formulation given in \eqref{max_p3}, we aim at finding moment sequences corresponding to a measure that is optimal to \eqref{max_p2}. After proving the equivalence of \eqref{max_p2} and \eqref{max_p3}, we next provide a sequence of finite dimensional SDPs and show that the corresponding sequence of optimal solutions can arbitrarily approximate the optimal solution of \eqref{max_p3}, which characterizes the optimal solution of \eqref{max_p2}.

Consider the following infinite dimensional SDP: 
\begin{align}
\mathbf{P^*_{y_q}}:= &\sup _{\mathbf y,\mathbf y_x\in\seq} (\mathbf{y})_\mathbf{0}, \label{max_p3}\\
\hbox{s.t.}\quad & M_{\infty }(\mathbf y)\succcurlyeq 0,\ M_\infty (\mathbf y;\mathbf{p}_j)\succcurlyeq 0,\quad j=1,\dots ,\ell, \label{eq:y_cons_inf}\subeqn\\
&M_\infty ({\mathbf y}_{\mathbf x})\succcurlyeq 0,\ \Vert {\mathbf y}_{\mathbf x} \Vert_\infty \leq 1,\ \left(\bf{y_x}\right)_\mathbf{0}=1, \label{eq:x_cons_inf}\subeqn\\
&M_\infty (\mathbf{A}{\mathbf y}_{\mathbf x}-{\mathbf y})\succcurlyeq 0,\label{eq:moment_ineq}\subeqn
\end{align}
where $\mathbf{A}:\seq\rightarrow\seq$ is a linear map depending only on $\mu_q$. Indeed, let $\mathbf{y_q}:=\{y_{q_\beta}\}_{\beta\in\mathbb{N}^m}$ be the moment sequence of $\mu_q$. Then for any given $\mathbf{y_x}=\{y_{x_\alpha}\}_{\alpha\in\mathbb{N}^n}$, $\mathbf{Ay_x}=\mathbf{\bar{y}}$ such that $(\mathbf{\bar{y}})_\theta=(\mathbf{y_q})_\beta (\mathbf{y_x})_\alpha$ for all $\theta=(\beta,\alpha)\in\mathbb{N}^m\times\mathbb{N}^n$. Given $\mathbf{y}\in\seq$, $M_\infty(\mathbf{y})\succcurlyeq 0$ means that $M_d(\mathbf{y})\succcurlyeq 0$ for all $d\in\integers_+$.

The following lemma establishes the equivalence of \eqref{max_p2} and \eqref{max_p3}.
\begin{lemma}
\label{max lem 1}
Suppose that $\cK$ satisfies Assumption~\ref{assump:putinar}. If an optimal solution to \eqref{max_p2} exists, call it $(\mu^*,~\mu^*_x)$, then their moment sequences $(\mathbf y^*,\mathbf y_x^*)$ is an optimal solution to \eqref{max_p3}. Conversely, if an optimal solution to~\eqref{max_p3} exists, call it $(\mathbf y^*,\mathbf y_x^*)$, then there exists \emph{representing} measures $\mu^*$ and $\mu^*_x$ such that $(\mu^*,~\mu^*_x)$ is optimal to \eqref{max_p2}. Moreover, the optimal values of \eqref{max_p2} and \eqref{max_p3} are the same, i.e., $\mathbf{P_{\mu_q}^*}=\mathbf{P_{y_q}^*}$.
\end{lemma}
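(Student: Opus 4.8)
The plan is to establish the equivalence in two directions, relating feasible/optimal points of the measure problem \eqref{max_p2} to those of the infinite-dimensional SDP \eqref{max_p3} via the correspondence between measures and their moment sequences. The crucial tool is Lemma~\ref{preliminary result_measure 2} (Putinar-type characterization of sequences with a representing measure on $\cK$), together with Lemma~\ref{lem:suf_cond_prob} for the probability measure $\mu_x$ on $\chi=[-1,1]^n$, and Lemma~\ref{lemma1} for the bound $\|\mathbf{y_x}\|_\infty\le 1$.

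\emph{Forward direction.} Suppose $(\mu^*,\mu_x^*)$ is optimal to \eqref{max_p2}; let $(\mathbf{y}^*,\mathbf{y}_x^*)$ be their moment sequences. I would check feasibility of $(\mathbf{y}^*,\mathbf{y}_x^*)$ for \eqref{max_p3}: since $\mu^*\in\cM(\cK)$ and $\cK$ satisfies Putinar's property (Assumption~\ref{assump:putinar}), Lemma~\ref{preliminary result_measure 2} gives $M_\infty(\mathbf{y}^*)\succcurlyeq 0$ and $M_\infty(\mathbf{y}^*;\mathbf{p}_j)\succcurlyeq 0$ for $j=1,\ldots,\ell$, which is \eqref{eq:y_cons_inf}. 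Since $\mu_x^*$ is a probability measure on $\chi$, Lemma~\ref{lem:nec_cond} gives $M_\infty(\mathbf{y}_x^*)\succcurlyeq 0$, Lemma~\ref{lemma1} gives $\|\mathbf{y}_x^*\|_\infty\le 1$, and $(\mathbf{y}_x^*)_\mathbf{0}=\mu_x^*(\chi)=1$, which is \eqref{eq:x_cons_inf}. For \eqref{eq:moment_ineq}, I would verify that $\mathbf{A}\mathbf{y}_x^*$ is exactly the moment sequence of the product measure $\mu_x^*\times\mu_q$ (this is the definition of $\mathbf{A}$, using $\int x^\alpha q^\beta\, d(\mu_x^*\times\mu_q)=(\int x^\alpha d\mu_x^*)(\int q^\beta d\mu_q)$), so $\mathbf{A}\mathbf{y}_x^*-\mathbf{y}^*$ is the moment sequence of the signed measure $(\mu_x^*\times\mu_q)-\mu^*$, which is a \emph{positive} measure by \eqref{eq:mesuare_ineq}; hence $M_\infty(\mathbf{A}\mathbf{y}_x^*-\mathbf{y}^*)\succcurlyeq 0$ by Lemma~\ref{lem:nec_cond}. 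The objective values agree since $(\mathbf{y}^*)_\mathbf{0}=\int d\mu^*$. This shows $\mathbf{P^*_{\mu_q}}\le\mathbf{P^*_{y_q}}$ once the converse establishes that any SDP-feasible point has no larger objective than some measure-feasible point.

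\emph{Converse direction.} Suppose $(\mathbf{y}^*,\mathbf{y}_x^*)$ is optimal (or merely feasible) for \eqref{max_p3}. From \eqref{eq:x_cons_inf} and Lemma~\ref{lem:suf_cond_prob} (with $c=1$), $\mathbf{y}_x^*$ has a representing measure $\mu_x^*$ supported on $\chi$, and $(\mathbf{y}_x^*)_\mathbf{0}=1$ forces $\mu_x^*$ to be a probability measure. From \eqref{eq:y_cons_inf} and Lemma~\ref{preliminary result_measure 2}, $\mathbf{y}^*$ has a representing measure $\mu^*$ on $\cK$; to invoke that lemma one must also know $\mathbf{y}^*$ is bounded — this follows because $0\preccurlyeq \mathbf{y}^* $ componentwise-dominance is controlled by $\mathbf{A}\mathbf{y}_x^*$ via \eqref{eq:moment_ineq}, whose entries are bounded by $1$ (products of entries of $\mathbf{y}_x^*$ and the fixed moment sequence $\mathbf{y_q}$, both $\le 1$ in sup-norm since $\mu_q\in\cM(\cQ)$ with $\cQ=[-1,1]^m$); more carefully, $M_\infty(\mathbf{y}^*)\succcurlyeq 0$ plus Lemma~\ref{mom_bound 1} bounds $|y^*_\alpha|$ by $\max\{y^*_0, \max_i L_{\mathbf{y}^*}(x_i^{2d})\}$, and the latter diagonal moments are dominated via \eqref{eq:moment_ineq} by the corresponding entries of $\mathbf{A}\mathbf{y}_x^*$, which are bounded. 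Then $\mathbf{A}\mathbf{y}_x^*-\mathbf{y}^*$, having all moment/localizing(-free) matrices PSD at every order and bounded entries, is itself the moment sequence of a positive measure $\nu$ on $[-1,1]^n\times[-1,1]^m$ by Lemma~\ref{lem:suf_cond_prob}; since $\mathbf{A}\mathbf{y}_x^*$ represents $\mu_x^*\times\mu_q$ and moment sequences on a compact set determine the measure uniquely (the hypercube is moment-determinate), $\nu=(\mu_x^*\times\mu_q)-\mu^*$, whence $\mu^*\preccurlyeq\mu_x^*\times\mu_q$, i.e. \eqref{eq:mesuare_ineq} holds. Thus $(\mu^*,\mu_x^*)$ is feasible for \eqref{max_p2} with the same objective $\int d\mu^*=(\mathbf{y}^*)_\mathbf{0}$, giving $\mathbf{P^*_{y_q}}\le\mathbf{P^*_{\mu_q}}$, and combined with the forward inequality, $\mathbf{P^*_{\mu_q}}=\mathbf{P^*_{y_q}}$ with optimal points mapping to optimal points.

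\emph{Main obstacle.} The delicate point is the converse direction: extracting a representing \emph{positive} measure for the difference sequence $\mathbf{A}\mathbf{y}_x^*-\mathbf{y}^*$ and identifying it as $(\mu_x^*\times\mu_q)-\mu^*$. This requires (a) a uniform bound on the entries of $\mathbf{y}^*$ — which is not assumed directly and must be derived from the coupling constraint \eqref{eq:moment_ineq} together with Lemma~\ref{mom_bound 1} and the boundedness of $\mathbf{y_q}$ and $\mathbf{y}_x^*$ — and (b) the uniqueness of the measure with a given moment sequence on the compact hypercube, so that the representing measure of the difference is forced to equal the difference of the measures. I would handle (a) by an induction/domination argument on even-degree diagonal moments and (b) by citing moment-determinacy on compact sets (as used in \cite{r_vol}). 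Everything else is a routine translation between the PSD-ness of moment/localizing matrices and the existence of representing measures.
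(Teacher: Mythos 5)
Your proposal is correct and follows essentially the same route as the paper: translate feasible points of \eqref{max_p2} to feasible points of \eqref{max_p3} via Lemmas~\ref{lem:nec_cond}, \ref{lemma1}, and \ref{preliminary result_measure 2}, and conversely recover representing measures via Lemmas~\ref{preliminary result_measure 2} and \ref{lem:suf_cond_prob}. The only difference is that you spell out the one step the paper states tersely --- deducing $\mu^*\preccurlyeq\mu_x^*\times\mu_q$ from \eqref{eq:moment_ineq} by extracting a representing measure for the difference sequence and invoking moment-determinacy on the compact hypercube --- which is a legitimate filling-in rather than a departure (note also that Lemma~\ref{preliminary result_measure 2} as stated does not require the boundedness of $\mathbf{y}^*$ you derive, though that bound is genuinely needed for applying Lemma~\ref{lem:suf_cond_prob} to the difference sequence).
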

\begin{proof}
Suppose that $(\mu,\mu_x)$ is feasible to \eqref{max_p2}. Let $\mathbf{y}$ and $\mathbf{y_x}$ be the moment sequences corresponding to $\mu$ and $\mu_x$, respectively. Lemma~\ref{preliminary result_measure 2} implies \eqref{eq:y_cons_inf}; Lemma~\ref{lemma1} and Lemma~\ref{lem:nec_cond} imply \eqref{eq:x_cons_inf}. Moreover, let $\bar{\mathbf y}=\{ \bar{y}_\alpha\}_{\alpha\in\mathbb{N}^{n+m}}$ be the moment sequence corresponding to the product measure $\bar{\mu}:=\mu_x \times \mu_q$. \eqref{eq:mesuare_ineq} implies that $\bar{\mu}-\mu$ is a measure; hence, Lemma~\ref{lem:nec_cond} implies $M_\infty (\bar{\mathbf y}-\mathbf y)\succcurlyeq 0$. 
Moreover, the definition of $\mathbf{A}$ implies that $\bar{\mathbf{y}}=\mathbf{A y_x}$, which gives \eqref{eq:moment_ineq}. Since $\mathbf{y}$ is chosen to be the moment sequence of $\mu$, we have $\int d\mu=y_\mathbf{0}$. This shows that for each $(\mu,\mu_x)$ feasible to \eqref{max_p2}, one can construct a feasible solution to \eqref{max_p3} with the same objective value. Therefore, $\mathbf{P^*_{y_q}}\geq \mathbf{P^*_{\mu_q}}$. Note that Assumption~\ref{assump:putinar} is not used for this argument.

Next, suppose that $\mathbf{(y,y_x)}$ is a feasible solution to \eqref{max_p3}. Since $\cK$ satisfies Assumption~\ref{assump:putinar}, \eqref{eq:y_cons_inf} and Lemma~\ref{preliminary result_measure 2} together imply that $\mathbf{y}$ has a representing finite Borel measure $\mu$ supported on $\cK$, i.e., $\mu\in\cM(\cK)$. Moreover, \eqref{eq:x_cons_inf} and Lemma~\ref{lem:suf_cond_prob} together imply that $\mathbf{y_x}$ has a representing probability measure $\mu_x$ supported on hyper-cube $\chi$, i.e., $\mu_x\in\cM(\chi)$ such that $\mu_x(\chi)=1$.
Hence, the sequence $A\mathbf{y_x}$ has a representing measure $\bar{\mu}$ which is the product measure of $\mu_x$ and $\mu_q$, i.e., $\bar{\mu}=\mu_x\times\mu_q$. Furthermore, since $\cK\subset\chi\times\cQ=[-1,1]^{n+m}$, \eqref{eq:moment_ineq} implies that $\mu\preceq\bar{\mu}$, which is \eqref{eq:mesuare_ineq}. Finally, the fact that $\mu$ is a representing measure of $\mathbf{y}$ implies that $\int d\mu=y_\mathbf{0}$. Therefore, $\mathbf{P^*_{y_q}}\leq\mathbf{P^*_{\mu_q}}$. Combining this with the above result gives us $\mathbf{P^*_{y_q}}=\mathbf{P^*_{\mu_q}}$.
\end{proof}

In order to have tractable approximations to the infinite dimensional SDP in \eqref{max_p3},
we consider the following sequence of SDPs, known as Lasserre's hierarchy~\cite{r_measure_ref 2}, defined below: 
\begin{align}
\mathbf{P}_d:= &\sup_{\mathbf{y}\in\reals^{S_{n+m,2d}},\ \mathbf{y_x}\in\reals^{S_{n,2d}}} (\mathbf{y})_\mathbf{0}, \label{max_p4}\\
\hbox{s.t.}\quad & M_d(\mathbf y)\succcurlyeq 0,\ M_{d-r_j}(\mathbf{y}; \mathbf{p}_j)\succcurlyeq 0,\quad j=1,\dots ,\ell, \label{eq:y_cons_i}\subeqn\\
&M_d ({\mathbf y}_{\mathbf x})\succcurlyeq 0,\ \norm{\mathbf{y_x}}_\infty \leq 1,\ \left(\mathbf{y_x}\right)_\mathbf{0}=1, \label{eq:x_cons_i}\subeqn\\
&M_d (A_d\mathbf{y_x}-{\mathbf y})\succcurlyeq 0,\label{eq:moment_ineq_i}\subeqn
\end{align}
where $\delta_j$ is the degree of $\cP_j$, $r_j:=\left\lceil\frac{\delta_j}{2}\right\rceil$ for all $1\leq j\leq \ell$, and $A_d:\reals^{S_{n,2d}}\rightarrow\reals^{S_{n+m,2d}}$ is defined similarly to $\mathbf{A}$ in \eqref{max_p3}. Indeed, let $\mathbf{y_q}:=\{y_{q_\beta}\}_{\beta\in\mathbb{N}_{2d}^m}$ be the truncated moment sequence of $\mu_q$. Then for any given $\mathbf{y_x}=\{y_{x_\alpha}\}_{\alpha\in\mathbb{N}_{2d}^n}$, $A_d\mathbf{y_x}=\mathbf{\mathbf{y}}$ such that $(\mathbf{\bar{y}})_\theta=(\mathbf{y_q})_\beta (\mathbf{y_x})_\alpha$ for all $\theta=(\beta,\alpha)\in\mathbb{N}_{2d}^{n+m}$.


In the following theorem, it is shown that the sequence of optimal solutions to the SDPs in \eqref{max_p4} converges to the solution of the infinite dimensional SDP in \eqref{max_p3}. In essence, the following theorem is similar to Theorem~3.2 in~\cite{r_vol}; however, for the sake of completeness we give its proof below.
\begin{theorem} \label{Max theo 2}
For all $d\geq 1$, there exists an optimal solution $(\mathbf{y}^d,\mathbf{y}^d_\mathbf{x})\in\reals^{S_{n+m,2d}}\times\reals^{S_{n,2d}}$ to \eqref{max_p4} with the optimal value $\mathbf{P}_d$. Let $\cS:=\{(\mathbf{y}^d,\mathbf{y}^d_\mathbf{x})\}_{d\in\integers_+}\subset\seq\times\seq$ be such that each element of $\cS$ is obtained by zero-padding, i.e., $\left(\mathbf{y}^d\right)_\alpha=0$ for all $\alpha\in\mathbb{N}^{n+m}$ such that $\norm{\alpha}_1>2d$, and $\left(\mathbf{y}^d_{\mathbf{x}}\right)_\alpha=0$ for all $\alpha\in\mathbb{N}^{n}$ such that $\norm{\alpha}_1>2d$. Then $\{\mathbf{P}_d\}_{d\in\integers_+}$ and $\cS$ have the following properties:
\begin{enumerate}[i)]
\item $\lim_{d\in\integers_+}\mathbf{P}_d=\mathbf{P^*}$, the optimal value of \eqref{intro_max},
\item
There exists an accumulation point of $\cS$ in the weak-$\star$ topology of $\ell_\infty$ and every accumulation point of $\cS$ is an optimal solution to \eqref{max_p3}. Hence, there exists corresponding representing measures $(\mu^*,\mu^*_x)$ that is optimal to \eqref{max_p2} and any $x^*\in supp(\mu_x^*)$ is optimal to \eqref{intro_max}.
\end{enumerate}
\end{theorem}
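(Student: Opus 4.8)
The plan is to follow the standard template for convergence of Lasserre's moment hierarchy, adapted to the two-measure setup, as in Theorem~3.2 of~\cite{r_vol}. First I would establish solvability of each truncated SDP \eqref{max_p4}. The feasible region is nonempty: taking $\mu_x=\delta_{x_0}$ for any $x_0\in\Pi_1$ and $\mu$ the zero measure yields, via truncated moment sequences, a feasible point. Boundedness and closedness of the feasible set come from the constraints: $\norm{\mathbf{y_x}}_\infty\leq 1$ bounds $\mathbf{y_x}$ directly; since $\cK\subset[-1,1]^{n+m}$, the constraint $M_d(A_d\mathbf{y_x}-\mathbf{y})\succcurlyeq 0$ together with $M_d(\mathbf{y})\succcurlyeq 0$ forces $0\leq (\mathbf{y})_\mathbf{0}\leq (A_d\mathbf{y_x})_\mathbf{0}=(\mathbf{y_q})_\mathbf{0}\,(\mathbf{y_x})_\mathbf{0}=1$, and then Lemma~\ref{mom_bound 1} (applied on $[-1,1]^{n+m}$, where $L_{\mathbf y}(x_i^{2d})\leq y_\mathbf{0}$ follows from positivity of the relevant diagonal entries of $M_d(A_d\mathbf{y_x}-\mathbf{y})$ combined with $\norm{\mathbf{y_x}}_\infty\leq1$) bounds $\mathbf{y}$ entrywise by $1$. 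A bounded closed subset of a finite-dimensional space with a continuous (linear) objective attains its supremum, giving the optimal $(\mathbf{y}^d,\mathbf{y}^d_\mathbf{x})$.

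Next I would show the value sequence is monotone and bounded, hence convergent, and identify the limit. Any feasible point of \eqref{max_p4} at level $d+1$ restricts (by dropping high-order entries) to a feasible point at level $d$ with the same objective value, so $\{\mathbf{P}_d\}$ is nondecreasing; and each $\mathbf{P}_d\leq 1$ and, more to the point, $\mathbf{P}_d\leq\mathbf{P^*}=\mathbf{P^*_{\mu_q}}=\mathbf{P^*_{y_q}}$ since a feasible point of the infinite-dimensional SDP \eqref{max_p3} truncates to a feasible point of \eqref{max_p4} — wait, that is the wrong direction; rather, I would argue $\mathbf{P}_d\geq\mathbf{P^*}$ is false and instead note that any optimal $x^*$ of \eqref{intro_max} (which exists since $\mu_q(\cF(\cdot))$ is, by a monotone-convergence / Portmanteau argument, upper semicontinuous on the compact set $\chi$, or simply because the equivalent problem \eqref{max_p2} over the weak-$\star$ compact set of probability measures on $\chi$ attains its sup — this is where I would invoke Theorem~\ref{Max Theo 1}(iii)) gives, via $(\delta_{x^*},\delta_{x^*}\times\mu_q)$ and truncation, a feasible point of \eqref{max_p4} with objective $\mu_q(\cF(x^*))=\mathbf{P^*}$, so $\mathbf{P}_d\geq\mathbf{P^*}$ for every $d$. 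Combined with $\mathbf{P}_d\leq\mathbf{P^*_{y_q}}=\mathbf{P^*}$ (truncating any feasible point of \eqref{max_p3}) this forces $\mathbf{P}_d=\mathbf{P^*}$ for all $d$, hence part~(i).

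For part~(ii), the entrywise uniform bound $\max\{\norm{\mathbf{y}^d}_\infty,\norm{\mathbf{y}^d_\mathbf{x}}_\infty\}\leq 1$ established above means $\cS$ lies in a closed ball of $\ell_\infty$, which is weak-$\star$ sequentially compact (Banach--Alaoglu, separable predual); so $\cS$ has an accumulation point $(\mathbf{y}^*,\mathbf{y}^*_\mathbf{x})$, and along the converging subsequence each fixed finite principal submatrix of each moment/localizing matrix converges, so the PSD constraints \eqref{eq:y_cons_inf}--\eqref{eq:moment_ineq} are preserved in the limit (each is a countable family of finite PSD conditions), as are $\norm{\mathbf{y}^*_\mathbf{x}}_\infty\leq1$ and $(\mathbf{y}^*_\mathbf{x})_\mathbf{0}=1$; and $(\mathbf{y}^*)_\mathbf{0}=\lim\mathbf{P}_d=\mathbf{P^*}=\mathbf{P^*_{y_q}}$. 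Thus $(\mathbf{y}^*,\mathbf{y}^*_\mathbf{x})$ is optimal for \eqref{max_p3}, and Lemma~\ref{max lem 1} produces representing measures $(\mu^*,\mu^*_x)$ optimal for \eqref{max_p2}, whence Theorem~\ref{Max Theo 1}(ii) gives that every $x^*\in supp(\mu^*_x)$ solves \eqref{intro_max}. The main obstacle I anticipate is the uniform entrywise bound on $\mathbf{y}^d$: one must carefully combine $M_d(A_d\mathbf{y_x}-\mathbf{y})\succcurlyeq0$, $M_d(\mathbf{y})\succcurlyeq0$, $\norm{\mathbf{y_x}}_\infty\leq1$, and the fact that the support sits inside $[-1,1]^{n+m}$ to control $L_{\mathbf y}(x_i^{2d})$ and then feed it into Lemma~\ref{mom_bound 1}; everything downstream (Alaoglu, diagonal subsequence, passing PSD constraints to the limit) is routine.
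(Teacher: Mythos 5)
Your overall architecture (existence via compactness of the truncated feasible sets, uniform entrywise bound, Banach--Alaoglu, passing the countably many finite PSD conditions to the weak-$\star$ limit, then Lemma~\ref{max lem 1} and Theorem~\ref{Max Theo 1}) matches the paper's proof, and your treatment of part~(ii) and of the bound $\norm{\mathbf{y}^d}_\infty\leq 1$ is essentially the intended one. However, your argument for part~(i) contains a genuine error in the direction of the key inequality, and it leads you to a false conclusion. You assert $\mathbf{P}_d\leq\mathbf{P^*_{y_q}}$ ``by truncating any feasible point of \eqref{max_p3}.'' Truncation shows the opposite: every feasible point of the infinite-dimensional problem \eqref{max_p3} truncates to a feasible point of the relaxation \eqref{max_p4} with the same objective, so the relaxation's feasible set is \emph{larger} (it also contains pseudo-moment vectors with no extension to a genuine moment sequence), whence $\mathbf{P}_d\geq\mathbf{P^*_{y_q}}$. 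For the same reason the value sequence is non\emph{increasing} in $d$, not nondecreasing as you state. Your conclusion that $\mathbf{P}_d=\mathbf{P^*}$ for every $d$ is false in general --- the paper's own numerics (Figure~3.2, where $\mathbf{P}_2=0.66$ while $\mathbf{P^*}=0.25$) show the relaxation bounds are typically strict for finite $d$.

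The consequence is that part~(i) cannot be proved independently of part~(ii) the way you attempt: the only available one-sided information from monotonicity is that $\{\mathbf{P}_d\}$ is nonincreasing and bounded below by $\mathbf{P^*_{y_q}}$, hence convergent with limit $\geq\mathbf{P^*_{y_q}}$. The matching upper bound on the limit comes precisely from the accumulation-point argument you place in part~(ii): the weak-$\star$ limit $(\mathbf{y}^*,\mathbf{y}^*_\mathbf{x})$ is feasible for \eqref{max_p3}, so $(\mathbf{y}^*)_\mathbf{0}\leq\mathbf{P^*_{y_q}}$, while $(\mathbf{y}^*)_\mathbf{0}=\lim_k\mathbf{P}_{d_k}\geq\mathbf{P^*_{y_q}}$; the sandwich yields both $\lim_d\mathbf{P}_d=\mathbf{P^*_{y_q}}=\mathbf{P^*}$ and optimality of the limit point simultaneously. (Your alternative route to $\mathbf{P}_d\geq\mathbf{P^*}$ via an optimal $x^*$ of \eqref{intro_max} also presumes attainment of the supremum, which Theorem~\ref{Max Theo 1}(iii) does not assert unconditionally; the truncation argument avoids this entirely.) A smaller imprecision: the inequality you write as $L_{\mathbf y}(x_i^{2d})\leq y_\mathbf{0}$ is not what is needed; the correct chain is $L_{\mathbf y}(x_i^{2d})\leq L_{A_d\mathbf{y_x}}(x_i^{2d})\leq\norm{A_d\mathbf{y_x}}_\infty\leq 1$, using nonnegativity of the diagonal of $M_d(A_d\mathbf{y_x}-\mathbf{y})$ and $\norm{\mathbf{y_q}}_\infty\leq 1$.
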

\begin{proof}
First, we will show that for all $d\geq 1$, the corresponding feasible region of \eqref{max_p4} is bounded . Fix $d\geq 1$.
Let $(\mathbf{y,y_x})$ be a feasible solution to \eqref{max_p4}. Then from \eqref{eq:x_cons_i}, we have $\norm{\mathbf{y_x}}_\infty\leq 1$. Since $\mu_q$ is a probability measure supported on $\cQ=[-1,1]^m$, Lemma~\ref{lemma1} implies that $\norm{\mathbf{y_q}}_\infty\leq 1$ as well. Moreover, the definition of $A_d$ further implies that $\norm{A_d\mathbf{y_x}}_\infty\leq 1$. Let $\mathbf{\bar{y}}:=A_d\mathbf{y_x}$.  
It follows from \eqref{eq:moment_ineq_i} that the diagonal elements of $M_d(\mathbf{\bar{y}}-\mathbf{y})$ are nonnegative, i.e., $\left(\mathbf{\bar{y}}\right)_{2\alpha}-\left(\mathbf{y}\right)_{2\alpha}\geq 0$ for all $\alpha\in\mathbb{N}_d^{n+m}$. This implies that
\begin{equation}
\label{eq:moment_bound}
\max \left\{ y_\mathbf{0}, \max_{i=1,\ldots,n+m} L_{\mathbf{y}}\left(x_i^{2d}\right) \right\}\leq\max_{\alpha\in\mathbb{N}_d^{n+m}}y_{2\alpha}\leq\max_{\alpha\in\mathbb{N}_d^{n+m}}\bar{y}_{2\alpha}\leq\norm{\mathbf{\bar{y}}}_\infty\leq 1,
\end{equation}
where the first inequality follows from the fact that $$\{y_\mathbf{0}\}\cup \left\{L_{\mathbf{y}}\left(x_i^{2d}\right):\ i=1,\ldots,n+m \right\}\subset\{y_{2\alpha}:\ \alpha\in\mathbb{N}_d^{n+m}\}.$$
From \eqref{eq:y_cons_i}, we have $M_d(\mathbf y)\succcurlyeq 0$. Hence, using Lemma~\ref{mom_bound 1}, \eqref{eq:moment_bound} implies that $|y_\alpha|\leq \norm{\mathbf{\bar{y}}}_\infty\leq 1$ for all $\alpha\in\mathbb{N}^{n+m}_{2d}$. Therefore, the feasible region is bounded. Since the cone of positive semidefinite matrices is a closed set and all the mappings in \eqref{max_p4} is linear, we also conclude that the feasible region is compact. Hence, there exists an optimal solution $(\mathbf{y}^d,\mathbf{y}^d_{\mathbf{x}})$ to the problem \eqref{max_p4} for all $d\geq 1$.

Fix $d\geq 1$. Clearly, for any given feasible solution $(\mathbf{y,y_x})$ to \eqref{max_p3}, by truncating the both sequences to vectors $\mathbf{y}\in\reals^{S_{n+m,2d}}$ and $\mathbf{y_x}\in\reals^{S_{n,2d}}$, we can construct a feasible solution to \eqref{max_p4} with the same objective value. Hence, it can be concluded that $\mathbf{P}_d\geq \mathbf{P^*_{y_q}}$ for all $d\geq 1$. Moreover, the same argument also shows that $\mathbf{P}_d\geq\mathbf{P}_{d'}$ for all $d'\geq d$. Hence, $\{\mathbf{P}_d\}_{d\in\integers_+}$ is a decreasing sequence bounded below by $\mathbf{P^*_{y_q}}$. Therefore, it is convergent and has a limit such that $\lim_{k\in\integers_+}\mathbf{P}_k\geq\mathbf{P^*_{y_q}}$.

In order to collect all the optimal solutions corresponding to different $d$ in one space, we
extend $(\mathbf{y}^d,\mathbf{y}^d_{\mathbf{x}})\in\reals^{S_{n+m,2d}}\times\reals^{S_{n,2d}}$ to vectors in $\ell_\infty$ (the Banach space of bounded sequences equipped with the sup-norm) by zero-padding, i.e., we set $(\mathbf{y}^d)_\alpha=0$ for all $\alpha\in\mathbb{N}^{n+m}$ such that $\norm{\alpha}_1>2d$, and $\left(\mathbf{y}^d_{\mathbf{x}}\right)_\alpha=0$ for all $\alpha\in\mathbb{N}^{n}$ such that $\norm{\alpha}_1>2d$. Note that $\ell_\infty$ is the dual space of $\ell_1$, which is separable; hence, sequential Banach-Alaoglu theorem states that the closed unit ball of $\ell_\infty$, denoted by $\cB_\infty$, is weak-$\star$ sequentially compact. 
Since $\{\mathbf{y}^d\}_{d\in\integers_+}\subset\cB_\infty$ and $\{\mathbf{y}^d_{\mathbf{x}}\}_{d\in\integers_+}\subset\cB_\infty$, there exists a subsequence $\{d_k\}\subset\integers_+$ such that $\{\mathbf{y}^{d_k}\}_{k\in\integers_+}$ and $\{\mathbf{y}^{d_k}_{\mathbf{x}}\}_{k\in\integers_+}$ converge weak-$\star$ to $\mathbf{y}^*\in\cB_\infty$ and $\mathbf{y^*_x}\in\cB_\infty$ in the weak-$\star$ topology, respectively. Hence,
\begin{equation}
\lim_{k\in\integers_+}\left(\mathbf{y}^{d_k}\right)_\alpha=\left(\mathbf{y}^*\right)_\alpha,\quad \forall~\alpha\in\mathbb{N}^{n+m},\quad\quad\lim_{k\in\integers_+}\left(\mathbf{y}^{d_k}_{\mathbf{x}}\right)_\alpha=\left(\mathbf{y^*_x}\right)_\alpha,\quad \forall~\alpha\in\mathbb{N}^{n}.
\end{equation}
Fix $d\geq 1$, then for all $k\in\integers_+$ such that $d_k\geq d$, we have
\begin{align*}
& M_d(\mathbf{y}^{d_k})\succcurlyeq 0,\ M_{d-r_j}(\mathbf{y}^{d_k};\mathbf{p}_j)\succcurlyeq 0,\quad j=1,\dots ,\ell,\\
&M_d (\mathbf{y}^{d_k}_\mathbf{x})\succcurlyeq 0,\ \norm{\mathbf{y}^{d_k}_\mathbf{x}}_\infty \leq 1,\ \left(\mathbf{y}^{d_k}_\mathbf{x}\right)_\mathbf{0}=1,\\
&M_d (\mathbf{A}\mathbf{y}^{d_k}_\mathbf{x}-\mathbf{y}^{d_k})\succcurlyeq 0.
\end{align*}
Since $d\in\integers_+$ is arbitrary, by taking the limit as $k\to\infty$, we see that $(\mathbf{y}^*,\mathbf{y}^*_\mathbf{x})$ satisfies all the constraints in \eqref{max_p3}. Therefore, $(\mathbf{y^*})_\mathbf{0}\leq \mathbf{P^*_{y_q}}$. On the other hand, $(\mathbf{y^*})_\mathbf{0}=\lim_{k\in\integers_+}\left(\mathbf{y}^{d_k}\right)_\mathbf{0}=\lim_{k\in\integers_+}\mathbf{P}_{d_k}$. Moreover, since every subsequence of a convergent sequence converges to the same point, we have $\lim_{k\in\integers_+}\mathbf{P}_k=\lim_{k\in\integers_+}\mathbf{P}_{d_k}=\mathbf{P^*_{y_q}}$. This shows that the subsequential limit $(\mathbf{y}^*,\mathbf{y}^*_\mathbf{x})$ is an optimal solution to \eqref{max_p3}. The rest of the claims follow from our previous results:
Theorem~\ref{Max Theo 1} and Lemma~\ref{max lem 1}.

\end{proof}
\subsection{Discussion on Improving Estimates of Probability}
\label{sec:prob_approx}
In our numerical experiments, we have observed that the convergence of the upper bound $\mathbf{P}_d$ 
to the optimum probability $\mathbf{P}^*$ was slow in $d$ when we solved the sequence of SDP relaxations in \eqref{max_p4}. Suppose that the semi-algebraic set $\cK:=\{(x,q):\ \cP_j(x,q)\geq 0,\ j=1,\ldots,\ell\}$ satisfies Putinar's property. The procedure detailed below helped us to get better estimates on the optimum probability $\mathbf{P}^*$. To make the upcoming discussion easier we make the following assumptions: i) there is a \emph{unique} $x^*\in\Pi_1$ such that $\mu_q(\cF(x^*))=\mathbf{P}^*$, and there exists some $\bar{q}$ such that $(x^*,\bar{q})\in\relint\cK$, where $\cF$ is defined in \eqref{eq:F_func}, and $\Pi_1:=\{x\in\reals^n:\ \exists q\in\reals^m \hbox{ s.t. } (x,q)\in\cK\}\subset\chi:=[-1,1]^n$; and ii) $\mu_q\in\cM(\cQ)$ has the following ``\emph{continuity}" property: if $\{S_k\}\subset\Sigma_q$ such that $\lim_{k\rightarrow\infty}S_k=S^*$ in the Hausdorff-metric, then $\lim_{k\rightarrow\infty}\mu_q(S_k)=\mu_q(S^*)$. Let $({\bf y}^d,{\bf y}^d_{\bf{x}})$ denote an optimal solution to the SDP relaxation in~\eqref{max_p4}, and form $x^d\in\reals^n$ using the components of $({\bf y}^d_{\bf{x}})_\alpha$ such that $\norm{\alpha}_1=1$. Clearly, $x^d\in\chi$. Since $\mu_q\in\cM(\cQ)$ is given, we approximate the volume
$\int_{\cF(x^d)}d \mu_q$ as described in \cite{r_vol} by solving an SDP relaxation for
\begin{equation}
\label{eq:volume}
\mathbf{\bar{P}}_d:=\sup_{\mu'\in\cM(\cF(x^d))} \int d\mu' \hbox{ s.t. } \mu'\preceq \mu_q.
\end{equation}
Note that this intermediate SDP can be built only after the relaxation in \eqref{max_p4} is solved. Let $\mathbf{P}'_d$ denote the optimal value of the volume approximation SDP corresponding to \eqref{eq:volume} with relaxation order $d$. Clearly, $\mathbf{\bar{P}}_d=\mu_q\left(\cF(x^d)\right)\geq 0$, and for all $d$ we have $\mathbf{P}_d\geq\mathbf{P}^*\geq\mathbf{\bar{P}}_d$,  and $\mathbf{P}_d\geq\mathbf{P}'_d\geq\mathbf{\bar{P}}_d$.
Note that since $x^*$ is the unique optimal solution (\emph{assumption i}), Theorem~\ref{Max theo 2} implies that $\lim_{d\rightarrow\infty}({\bf y}^d_{\bf x})_\alpha=({\bf y^*_x})_\alpha$ for all $\alpha\in\mathbb{N}^{n}$ such that ${\bf y^*_x}$ is the moment sequence corresponding to Dirac measure at $x^*$. Therefore, from the definition of $x^d$, it follows that $\lim_{d\rightarrow\infty}x^d=x^*$.
Also note that since $\cK$ is compact (from Putinar's property) and $\cP_j$ is a polynomial in $(x,q)$ for all $j=1,\ldots,\ell$, it follows that the multifunction $\cF:\chi\rightarrow\Sigma_q$ such that $\cF(x)=\{q\in\cQ:\ (x,q)\in\cK\}$ with $\dom\cF=\Pi_1$ is locally bounded, closed-valued, and $\lim_{d\rightarrow\infty}\cF(x^d)=\cF(x^*)$ in Hausdorff metric. Hence, \emph{assumption ii} implies that $\lim_{d\rightarrow\infty}\mathbf{\bar{P}}_d=\lim_{d\rightarrow\infty}\mu_q\left(\cF(x^d)\right)=\mathbf{P}^*$. Moreover, since $\lim_{d\rightarrow\infty}\mathbf{P}_d=\mathbf{P}^*$ (from Theorem~\ref{Max theo 2}), and $\mathbf{P}_d\geq\mathbf{P}'_d\geq\mathbf{\bar{P}}_d$ for all $d$, we can conclude that $\lim_{d\rightarrow\infty}\mathbf{P}'_d=\mathbf{P}^*$ as well.

We noticed in our numerical experiments that although $\{\mathbf{P}'_d\}_{d\in\integers_+}$ is closer to $\mathbf{P}^*$ when compared to $\{\mathbf{P}_d\}_{d\in\integers_+}$, the convergence of $\mathbf{P}'_d$ to $\mathbf{P}^*$ was still slow in practice as $d$ increases.
This phenomena may partly be explained as in \cite{r_vol} by considering the dual problem. Let $\cC$ be the Banach space of continuous functions on $\cQ$ such that $\norm{f}:=\sup_{q\in\cQ}f(q)$ for $f\in\cC$, and $\cC_+:=\{f\in\cC:\ f\geq 0 \hbox{ on } \cQ\}$. The Lagrangian dual of \eqref{eq:volume} is given below:\vspace{-0.5cm}
\begin{align}
\mathbf{\bar{P}}_d^\mathbf{Dual}:=&\ \inf_{f \in \cC_+} \int f~d\mu_q, \label{max_Dual}\\
\hbox{s.t.}\quad & f\geq 1 \hbox{ on } \cF(x^d). \nonumber
\end{align}
Moreover, \emph{assumption ii} (``continuity" of $\mu_q$) and Urysohn's Lemma together imply that $\mathbf{\bar{P}}_d^\mathbf{Dual}=\mathbf{\bar{P}}_d$ for all $d$. 
Let $\cI_{\cF(x^d)}$ denote the indicator function of the semi-algebraic set $\cF(x^d)$, i.e., $\cI_{\cF(x^d)}(q)=1$ if $q\in\cF(x^d)$, and $0$ otherwise. Indeed, solving the SDP relaxation of \eqref{eq:volume} corresponds in dual space to approximating $\cI_{\cF(x^d)}$, which is \emph{discontinuous} on the boundary of the set. Therefore, although there exists a minimizing sequence of functions belonging to $\cC_+$ that approximates $\cI_{\cF(x^d)}$ from above, the discontinuity on the boundary of $\cF(x^d)$ causes the Gibbs phenomenon -- see the oscillation observed in Figure~3.3.a. This might be an important factor lurking behind the numerically observed slow convergence of $\{\mathbf{P}'_d\}_{d\in\integers_+}$ to $\mathbf{P}^*$.

Let $\cG^d:\cQ\rightarrow\reals$ such that $\cG^d(q):=\prod_{j=1}^\ell \cP_j(x^d,q)$. To deal with the numerical problems caused by approximating the discontinuous indicator function, we propose to solve
\begin{equation}
\label{eq:volume_new}
\sup_{\tilde{\mu}\in\cM(\cF(x^d))} \int \cG^d~d\tilde{\mu} \hbox{ s.t. } \tilde{\mu}\preceq \mu_q.
\end{equation}
Let $\mu^*_d$ denote the optimal solution to \eqref{eq:volume_new}. ``Continuity" of $\mu_q$ in \textit{assumption ii} implies that $\cG^d$ is strictly positive almost everywhere on $\cF(x^d)$. Hence, $\mu^*_d$ is clearly also optimal to \eqref{eq:volume}. Therefore, $\mu^*_d\left(\cF(x^d)\right)=\mu_q(\cF(x^d))=\mathbf{\bar{P}}_d\rightarrow\mathbf{P}^*$ as $d\rightarrow\infty$. Let $\cU^d(q)=\max\{\cG^d(q),~0\}$, and note that $\cU^d$ is continuous on the boundary of $\cF(x^d)$; hence, it is important to emphasize that solving \eqref{eq:volume_new} corresponds to approximating the \emph{continuous} function $\cU^d$ from above on $\cF(x^d)$. These properties of \eqref{eq:volume_new} motivated us to numerically investigate the behaviour of $\{\mathbf{\tilde{P}}_d\}_{d\in\integers_+}$ sequence, where $\mathbf{\tilde{P}}_d:= ({\bf \tilde{y}}^d)_0$ and ${\bf \tilde{y}}^d$ denotes an optimal solution to the SDP relaxation for \eqref{eq:volume_new} with order $d$. In our numerical experiments we observed that $\mathbf{\tilde{P}}_d\rightarrow\mathbf{P}^*$; however, this time with a faster convergence rate. 
To illustrate this behavior numerically, we considered two simple example problems in Section~\ref{sec:simple_example}. 
\subsection{Simple Examples}
\label{sec:simple_example}
In this section, we present two simple example problems that illustrate the effectiveness of
the proposed methodology 
to solve the chance optimization problem in \eqref{intro_max}. 
The decision variables and the uncertain problem parameters in these examples are low dimensional for illustrative purposes. In the first example, we considered a problem over a semialgebraic set defined by a single polynomial:\vspace{-0.15cm}
\begin{equation} \label{max_exa1_def1}
\sup_{x\in\reals} \mu_q\left(\{q \in\reals :\ \mathcal{P}(x,q)\geq 0\ \}\right),
\end{equation}
\vspace{-0.5cm}
where
\begin{equation} \label{max_exa1_def2}
\mathcal{P}\left(x,q\right)
=\tfrac{1}{2}q\left(q^2+{(x-\tfrac{1}{2})}^2\right)-\left(q^4+q^2{(x-\tfrac{1}{2})}^2+{(x-\tfrac{1}{2})}^4\right).
\end{equation}

The uncertain parameter $q\in\reals$ has a uniform distribution on [-1,1]. To obtain an approximate solution, we solve the SDP in \eqref{max_p4} with the minimum relaxation order $d=2$ since
the degree of the polynomial in \eqref{max_exa1_def2} is 4. The moment vectors $\mathbf y_{\mathbf q}$, $\mathbf{y}_\mathbf{x}$, and $\mathbf{y}$ for the measures $\mu_q$ and $\mu_x$, and $\mu$ up to order four are
\[\mathbf y_{\mathbf q}^T =\left[1,~0,~\tfrac{1}{3},~0,~\tfrac{1}{5}\right],\quad \mathbf{y}^T_\mathbf{x}=\left[1,~y_{x_1},~y_{x_2},~y_{x_3},~y_{x_4}\right],\]
\vspace{-8 mm}
\[ \mathbf{y}^T=\left[y_{00}~|~y_{10},~y_{01}~|~y_{20},~y_{11},~y_{02}~|~y_{30},~y_{21},~y_{12},~y_{03}~|~
y_{40},~y_{31},~y_{22},~y_{13},~y_{04}\right].
\]
Given moment vectors $\mathbf y_{\mathbf q} $, 
the moment vector $\mathbf{\bar{y}}$ for the measure $\overline{\mu} =\mu_x \times \mu_q $ has the form
\begin{small}
\begin{align*}
\mathbf{\bar{y}}^T &=
\left[1~|~y_{x_1},~y_{q_1}~|~y_{x_2},~y_{x_1}y_{q_1},~y_{q_2}~|~y_{x_3},~y_{x_2}y_{q_1},~y_{x_1}y_{q_2},~y_{q_3}~|~
y_{x_4},~y_{x_3}y_{q_1},~y_{x_2}y_{q_2},~y_{x_1}y_{q_3},~y_{q_4}\right],\\
&= \left[1~|~y_{x_1},~0~|~y_{x_2},~0,~\tfrac{1}{3}~|~y_{x_3},~0,~\tfrac{1}{3}y_{x_1},~0~|~y_{x_4},~0,~\tfrac{1}{3} y_{x_2},~0,~\tfrac{1}{5}\right].
\end{align*}
\end{small}
\noindent SDP in~\eqref{max_p4} with $d=2$ is solved using SeDuMi~\cite{sedumi}, which is an interior-point solver add-on for Matlab, and 
the following solution was obtained:
\begin{small}
\[{\mathbf y^*}^T =\left[0.66,~0.3,~0.14,~0.16,~0.07,~0.1,~0.08,~0.03,~0.05,~0.04,~0.04,~0.02,~0.02,~0.02,~0.02\right],\]
\vspace{-0.2in}
\[{\mathbf{y_x^*}}^T =[ 1,0.50,0.25,0.13,0.85].\]
\end{small}
\begin{figure}[!h]
\centering
 \includegraphics[scale=0.5]{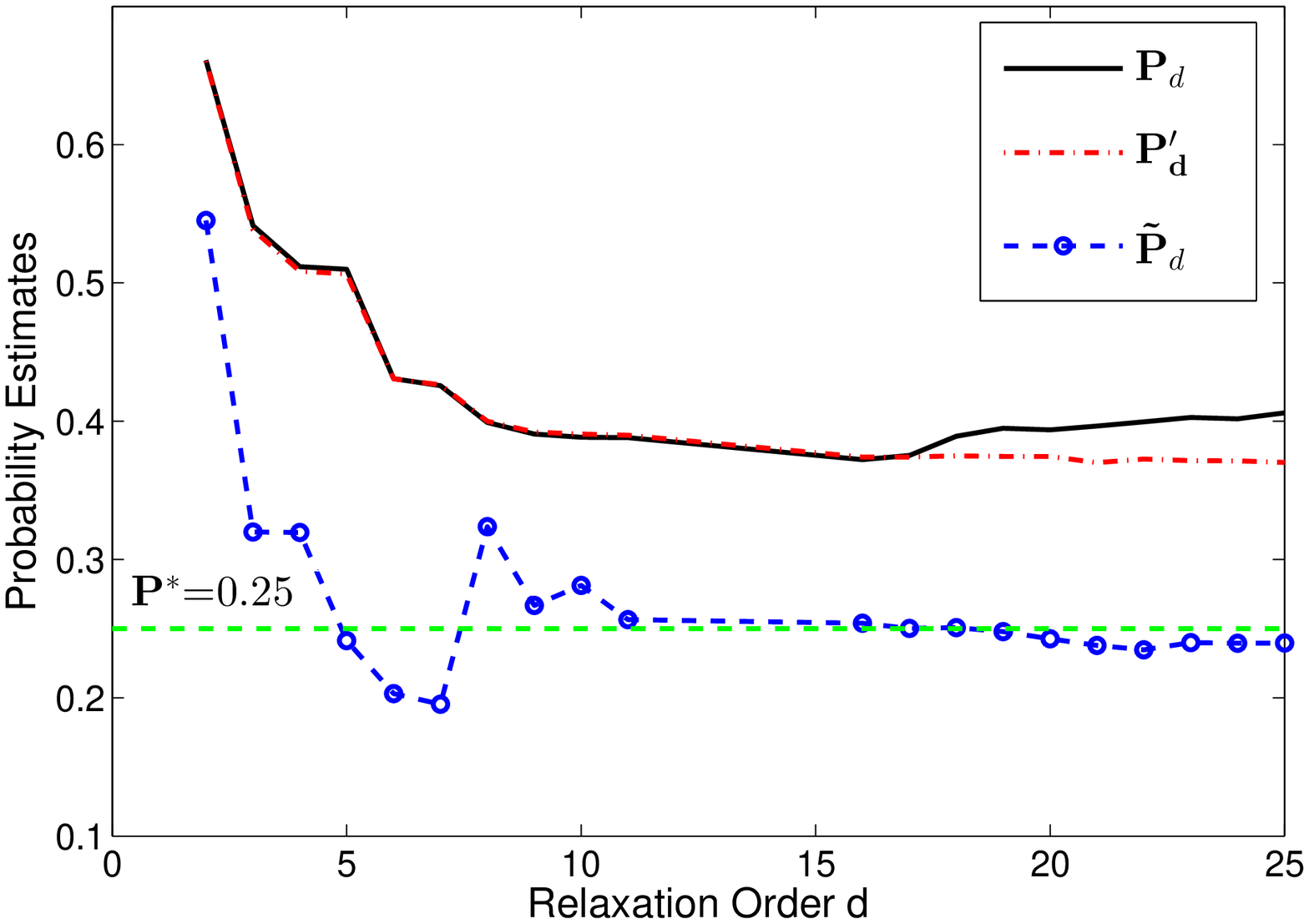}
\caption{$\mathbf{P}_d$, $\mathbf{P'}_d$, and $\mathbf{\tilde{P}}_d$ for increasing relaxation order $d$}
\label{fig:2}       
\vspace{-0.5cm}
\end{figure}

We approximate the solution to \eqref{intro_max} with $y^*_{x_1} = 0.5$ (in Section~\ref{sec:prob_approx} we make a case for this approximation under some simplifying assumptions), and estimate the optimal probability $\mathbf{P^*}$ with $\mathbf{P}_2=y^*_{00}= 0.66$. To test the accuracy of the results obtained, we used Monte Carlo simulation to estimate $\mathbf{P^*}$ and an optimal solution to \eqref{max_exa1_def1}. The details of the Monte Carlo simulation are discussed in Section~\ref{sec:simulation}. This computationally intensive method estimated that $x^* =0.5$ with optimal probability of $0.25$. 
To obtain better estimates of the optimum probability, one needs to increase the relaxation order $d$. 

Figure~\ref{fig:2} displays the three sequences defined in Section~\ref{sec:prob_approx}: $\{\mathbf{P}_d\}_{d\in\integers_+}$, $\{\mathbf{P}'_d\}_{d\in\integers_+}$, and $\{\mathbf{\tilde{P}}_d\}_{d\in\integers_+}$, against the optimal probability $\mathbf{P^*}=0.25$ denoted by the green dashed line. 
For increasing relaxation orders $d = 2,...,25$, we adopted SeDuMi~\cite{sedumi} to compute $\mathbf{P}_d$ and $\mathbf{P}'_d$, the optimal values of the SDP in \eqref{max_p4}, and of the SDP relaxation for the volume problem in \eqref{eq:volume} with relaxation order $d$, respectively; and also to compute $\mathbf{\tilde{P}}_d=({\bf \tilde{y}}^d)_0$. 
Similar to the results in~\cite{r_vol}, Figure~\ref{fig:2} shows a faster convergence to $\bf P^*$ for the case when $\int \cG^d~d\tilde{\mu}$ is maximized as in \eqref{eq:volume_new}. Let $\cI_{\cF(x^d)}$ denote the indicator function of $\cF(x^d)$, i.e., $\cI_{\cF(x^d)}(q)=1$ if $q\in\cF(x^d)$, and $0$ otherwise. As discussed in Section~\ref{sec:prob_approx}, $\cU^d=\max\{\cG^d, 0\}$ is a continuous function while $\cI_{\cF(x^d)}$ is discontinuous on the boundary of $\cF(x^d)$; and this might be a factor affecting the convergence speed. Indeed, Figure~\ref{fig:2-2}.a displays the degree-100 polynomial approximation $f^*$ to $\cI_{\cF(x^*)}$, the indicator function of the set $\cF(x^*)$, i.e., $f^*$ is a minimizer to $\inf_{f\in\mathbb R_{\rm d}[x]}\{\int f~d\mu_q:\ f\geq 0\ \hbox{ on } \cQ,\ f\geq 1\ \hbox{ on } \cF(x^*)\}$ for $d=100$. Note that this problem is a restriction of the Lagrangian dual problem for $\sup\{\int~d\mu':\ \mu\preceq\mu_q,\ \mu'\in\cM(\cF(x^*))\}$ --indeed, dual variable $f\in\cC$ is restricted to be in $\mathbb R_{\rm d}[x]$. 
On the other hand, Figure~\ref{fig:2-2}.b displays the degree-100 polynomial approximation $h^*$ to the piecewise-polynomial function $\cU=\max\{\cG, 0\}$, where $\cG(q)=\cP(x^*,q)$ and $h^*$ is a minimizer to $\inf_{h\in\mathbb R_{\rm d}[x]}\{\int h~d\mu_q:\ h\geq 0\ \hbox{ on } \cQ,\ h\geq \cG\ \hbox{ on } \cF(x^*)\}$ for $d=100$. Similarly, this problem is a restriction of the Lagrangian dual problem for $\sup\{\int \cG~d\tilde{\mu}:\ \tilde{\mu}\preceq\mu_q,\ \tilde{\mu}\in\cM(\cF(x^*))\}$. Note that Figure~\ref{fig:2-2} shows that it is easier to approximate the \emph{continuous} function $\cU=\max\{\cG, 0\}$ than the \emph{discontinuous} indicator function $\cI_{\cF(x^*)}$.
\begin{figure}[!h]
\subfigure[$f^*$: the degree-100 polynomial approximation to $\cI_{\cF(x^*)}$, indicator function of $\cF(x^*)$]{\includegraphics[scale=0.4]{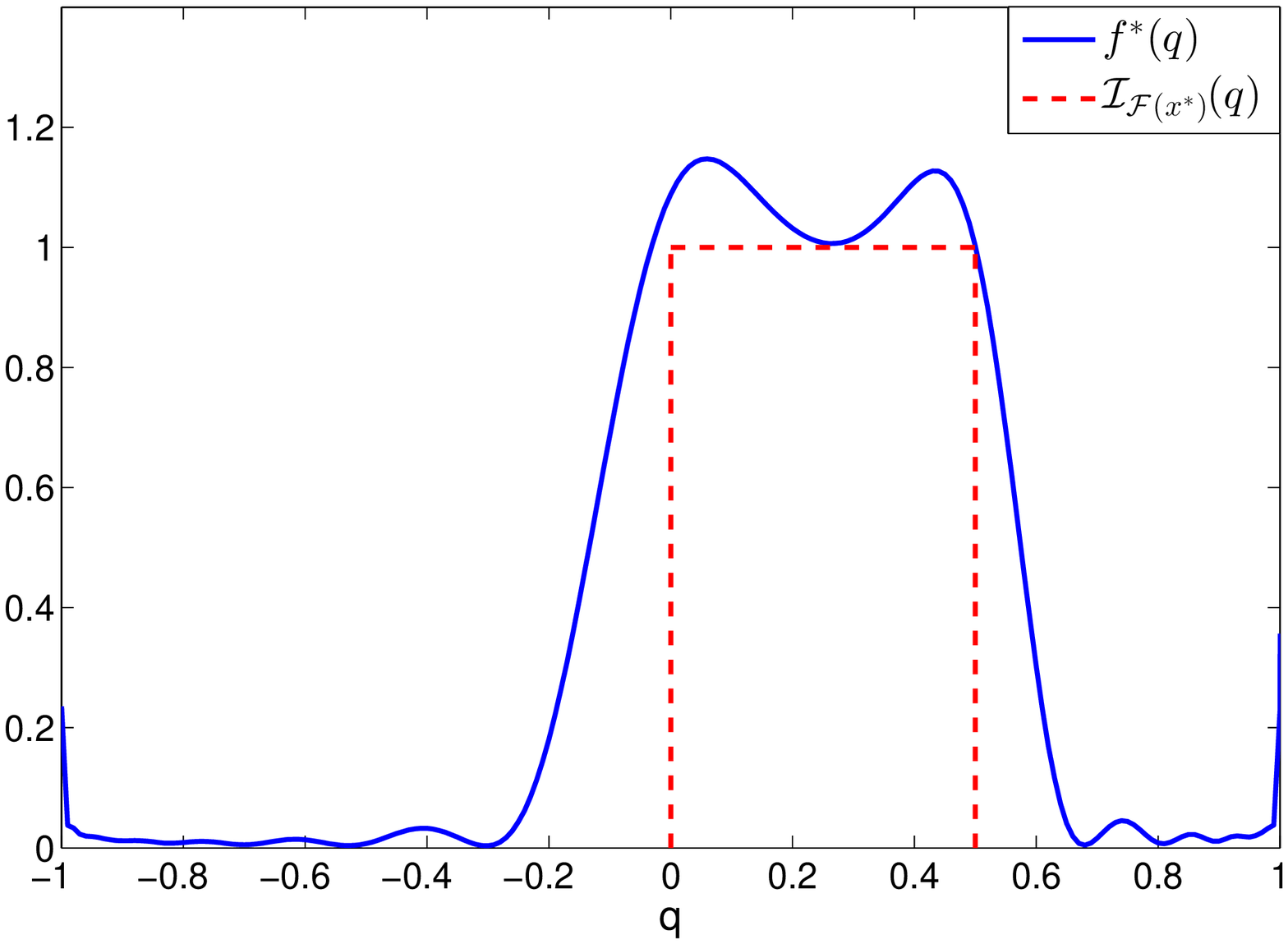}}
\subfigure[$h^*$: the degree-100 polynomial approximation of the piecewise-polynomial function $\cU(q)=\max\{\cG(q),~0\}$]{\includegraphics[scale=0.4]{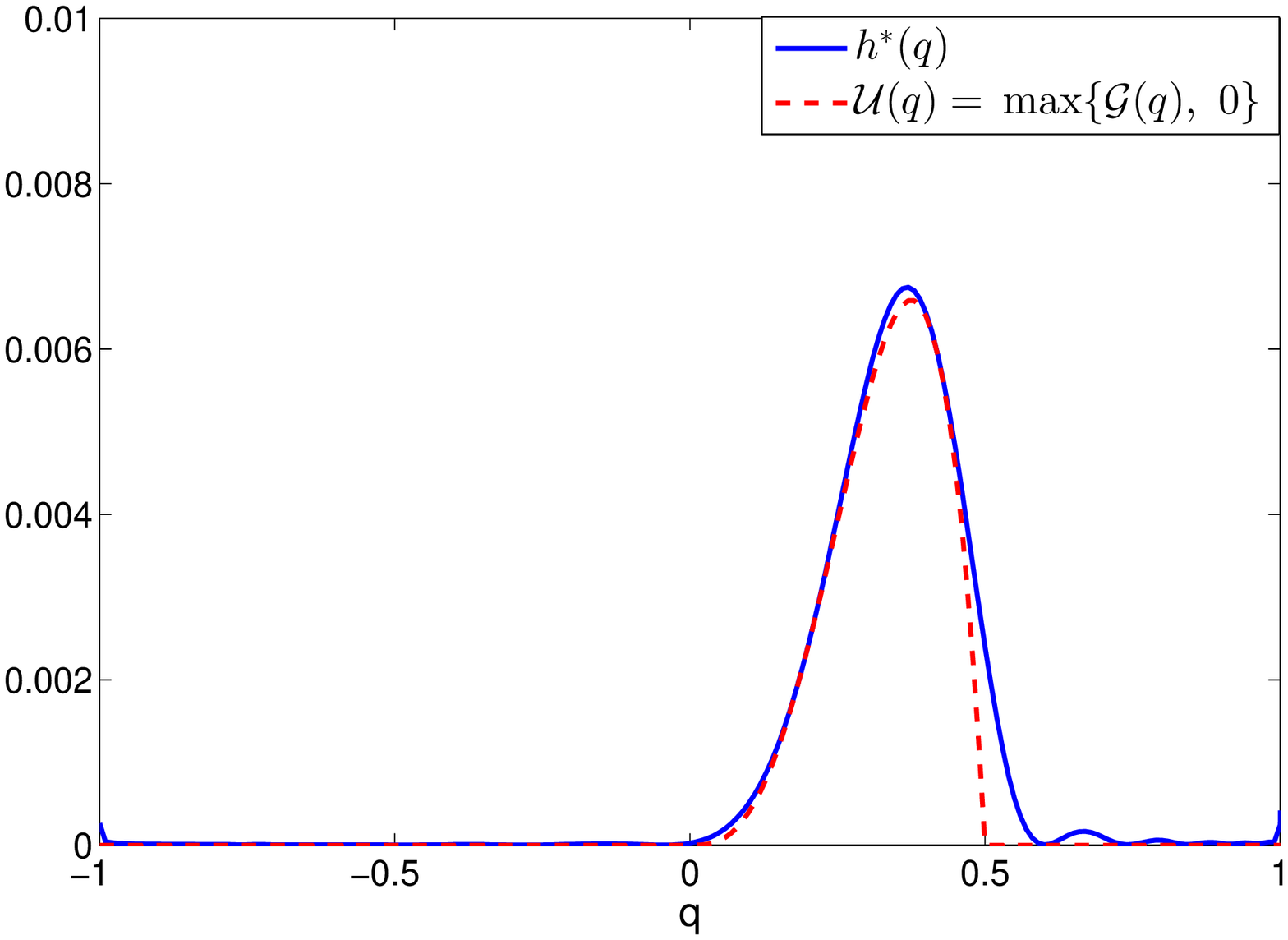}}
\caption{Comparison of $\sup_{\mu'\in\cM(\cF(x^*))} \int d\mu' \hbox{ s.t. } \mu'\preceq \mu_q$ and $\sup_{\tilde{\mu}\in\cM(\cF(x^*))} \int \cG~d\tilde{\mu} \hbox{ s.t. } \tilde{\mu}\preceq \mu_q$ from the dual perspective for $\cG(q)=\cP(x^*,q)$}
\label{fig:2-2}
\end{figure}

Next, we considered a problem over a semialgebraic set defined by an intersection of two polynomials:
\vspace{-0.2cm}
\begin{equation} \label{max_exa1_def3}
\sup_{x\in\reals} \mu_q\left(\{q \in\reals :\ \mathcal{P}_1(x,q)\geq0,\ \mathcal{P}_2(x,q)\geq0\ \}\right),
\vspace{-0.3cm}
\end{equation}
\vspace{-0.5cm}
where
\begin{equation} \label{max_exa1_def4}
\mathcal{P}_1\left(x,q\right)= 0.1275+0.7x-x^2-q^2,\quad  \mathcal{P}_2\left(x,q\right)= -0.1225+0.7x+q-x^2-q^2.
\end{equation}
The uncertain parameter $q\in\reals$ has a uniform distribution on [-1,1]. Against the optimal probability $\mathbf{P^*}=0.25$ denoted by the green dashed line, Figure~\ref{fig:22} displays two other sequences, $\{\mathbf{\tilde{P}^{(1)}}_d\}_{d\in\integers_+}$ and $\{\mathbf{\tilde{P}^{(2)}}_d\}_{d\in\integers_+}$, in addition to the three sequences defined in Section~\ref{sec:prob_approx}: $\{\mathbf{P}_d\}_{d\in\integers_+}$, $\{\mathbf{P}'_d\}_{d\in\integers_+}$, and $\{\mathbf{\tilde{P}}_d\}_{d\in\integers_+}$. Here, $\mathbf{\tilde{P}^{(1)}}_d$ and $\mathbf{\tilde{P}^{(2)}}_d$ are defined similarly to $\mathbf{\tilde{P}}_d=({\bf \tilde{y}^d})_\mathbf{0}$ by replacing $\cG^d(q)=\cP_1(x^d,q) \cP_2(x^d,q)$ in \eqref{eq:volume_new} with $\cP_1(x^d,q)$, and $\cP_2(x^d,q)$, respectively.
\begin{figure}
\centering
\includegraphics[scale=0.5]{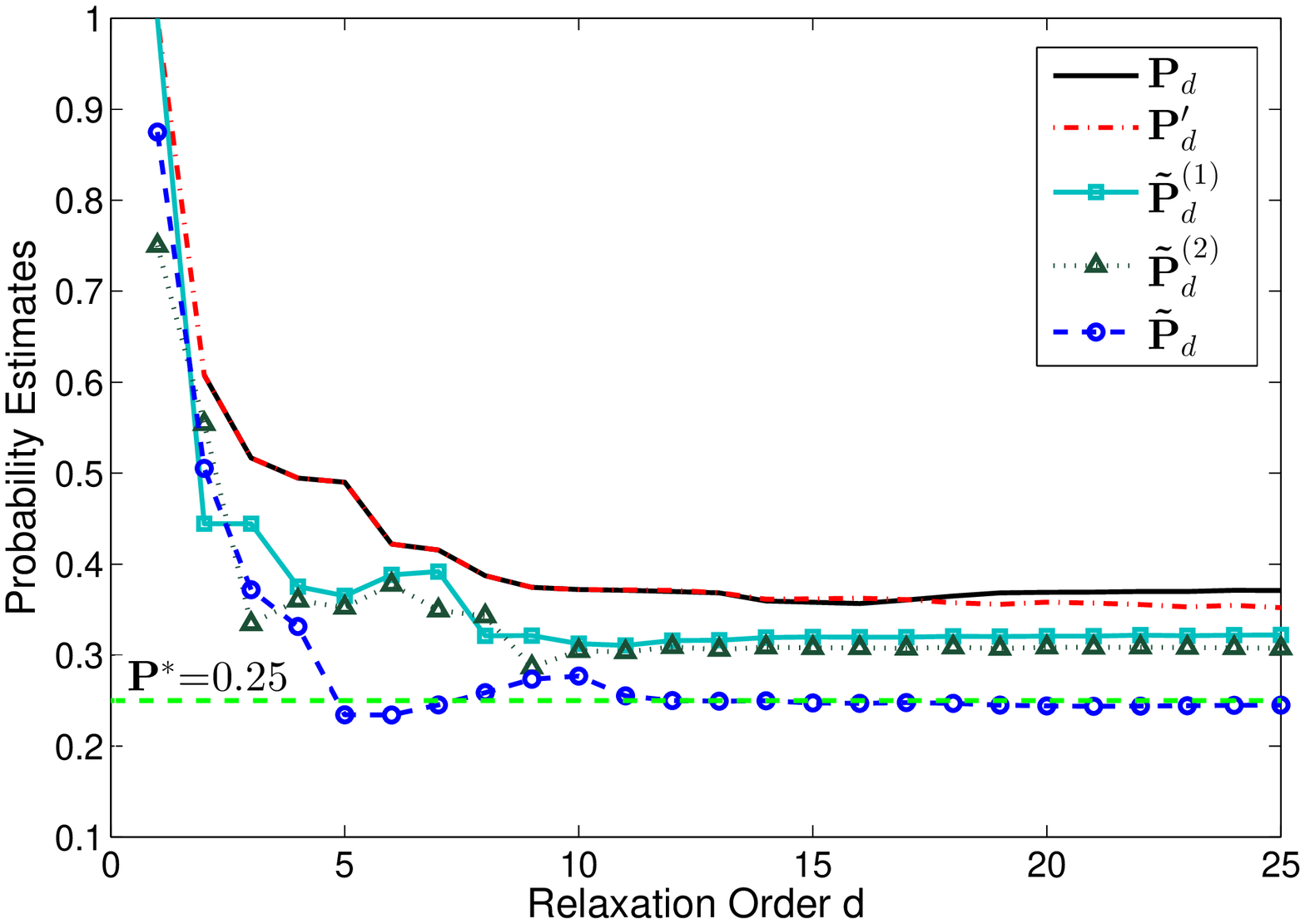}
\caption{$\mathbf{P}_d$, $\mathbf{P'}_d$, $\mathbf{\tilde{P}^{(1)}}_d$, $\mathbf{\tilde{P}^{(2)}}_d$, and $\mathbf{\tilde{P}}_d$ for increasing relaxation order $d$}
\label{fig:22}
\vspace{-0.25cm}
\end{figure}
\subsection{Orthogonal Basis}
In this paper, all polynomials are expanded in the usual monomial basis, and the SDPs are therefore formulated as optimization problems over ordinary monomial moments. However, one can  improve the numerical performance as in~\cite{r_vol} by employing an orthogonal basis of polynomials. First, we redefine the moment and localization matrices represented in the given orthogonal basis. Recall that the $S_{n,d}\times S_{n,d}$-moment matrix represented in monomial basis can be written as $M_d({\mathbf y}) = L_\mathbf{y}\left(\cB_d \cB_d^T\right)$, where $\cB_d^T=\left[x^{\alpha^{(1)}},\ldots, x^{\alpha^{(S_{n,d})}}\right]^T$ denotes the vector comprised of the elements of the monomial basis of $\mathbb R_{\rm d}[x]$, where $S_{n,d} := \binom{d+n}{n}$ and $\{\alpha^{(i)}\}_{i=1}^{S_{n,d}}=\mathbb{N} ^{\rm n}_d := \{\alpha \in \mathbb N^n : \norm{\alpha}_1 \leq d \}$ such that $\mathbf{0} = \alpha ^{(1)} <_g \ldots <_g \alpha ^{(S_{n,d})}$ are sorted in \emph{grevlex} order. Similarly, given a polynomial $\mathcal{P} \in \mathbb R [x]$ with coefficient vector $ \mathbf p = \{ p_{\gamma }\}_{\gamma\in\mathbb{N}^n}$ with respect to the monomial basis, its $S_{n,d}\times S_{n,d}$-localizing matrix represented in the monomial basis can be written as $M_d(\mathbf{y};\mathbf{p})=L_{\mathbf{y}}\left(\mathbf \cP \cB_d\cB_d^T\right)$.

Let $\{b_i\}_{i\in\mathbb{N}}$ be an orthogonal basis of \emph{univariate} polynomials on $[-1,1]$, i.e., $\int_{[-1,1]}b_i(t)b_j(t)~dt=0$ for all $i\neq j$. Without loss of generality, suppose that the degree of $b_i$ is equal to $i$ for all $i\in\mathbb{N}$. Given $n\geq 1$, for all $\alpha\in\mathbb{N}^n$, define $b_\alpha:\reals^n\rightarrow\reals$ such that $b_\alpha(x):=\prod_{i=1}^n b_{\alpha_i}(x_i)$, where $\alpha_i$ and $x_i$ are the $i$-th components of $\alpha\in\mathbb{N}^n$ and $x\in\reals^n$, respectively. Clearly $\{b_\alpha:\ \alpha\in\mathbb{N} ^{\rm n}_d\}$ is an orthogonal basis of multivariate polynomials on $[-1,1]^n$ with degree at most $d$, i.e., $\int_{[-1,1]^n}b_{\alpha^{(i)}}(x)~b_{\alpha^{(j)}}(x)~dx=0$ for all $1\leq i\neq j\leq S_{n,d}$. Let $\cB^o_d$ denote the vector of polynomials in $\mathbb R_{\rm d}[x]$ defined as ${\cB^o_d}^T=\left[b_{\alpha^{(1)}}(x),~b_{\alpha^{(2)}}(x),~\ldots,b_{\alpha^{(S_{n,d})}}(x)\right]$; and $T_d\in\reals^{S_{n,d}\times S_{n,d}}$ denote the one-to-one correspondence such that $\cB^o_d = T_d \cB_d$. Moreover, for a given sequence $\mathbf{y}=\{y_\alpha\}_{\alpha\in\mathbb{N}^n}$, let $L^o_\mathbf{y}:\reals[x]\rightarrow\reals$ be a linear map defined as
\begin{equation}
\label{eq:lin_ortho_map}
\cP \quad \mapsto \quad L^o_\mathbf{y}(\cP)=\sum_{\alpha\in\mathbb{N}^n}p^o_\alpha y_\alpha, \quad \hbox{where} \quad \cP(x)=\sum_{\alpha\in\mathbb{N}^n} p^o_\alpha b_\alpha(x).
\end{equation}
Given $y\in\reals^{S_{n,2d}}$ such that $y^T=\left[y_{\alpha^{(1)}},\ldots,y_{\alpha^{(S_{n,2d})}}\right]^T$, define its extension $\mathbf{y}=\{y_{\alpha}\}_{\alpha\in\mathbb{N}^n}$ such that $y_\alpha=0$ for all $\alpha\in\mathbb{N}^n$ with $\norm{\alpha}_1>2d$. For $\bar{y}:=T_{2d}^{-1}y$, define its extension $\mathbf{\bar{y}}$ similarly. Then for all $\cP\in\reals_d[x]$, we have $L^o_{\bf y}(\cP)=L_{\mathbf{\bar{y}}}(\cP)$. In the rest of the paper, we abuse the notation and write $\mathbf{\bar{y}}=T_{2d}^{-1}\mathbf{y}$. Then the moment matrix operator, $M^o_d(\mathbf y)$, for the given orthogonal basis is defined as
\begin{equation}
\label{eq:moment_orth}
M^o_d(\mathbf y) := L^o_{\mathbf y} \left(  \cB^o_d~{\cB^o_d}^T \right) = L_{T_{2d}^{-1}\mathbf y} \left( T_d\cB_d~{\cB_d}^T T_d^T \right)=T_d M_d\left(T_{2d}^{-1}\mathbf y\right)T_d^T.
\end{equation}
\newpage
For example for $d=2$ and $n=2$, the moment matrix under the orthogonal basis formed by Chebyshev polynomials of the first kind can be written as follows
\begin{equation} \label{moment matrix exa2}
\renewcommand{\arraystretch}{1.2}
M^o_2\left({\mathbf y}\right)=
\left[
         \begin{array}{cccccc}
           y_{00} & y_{10} & y_{01} & y_{20} & y_{11} & y_{02} \\
           y_{10} & \frac{y_{00}+y_{20}}{2} & y_{11} & \frac{y_{10}+y_{30}}{2} & \frac{y_{01}+y_{21}}{2} & y_{12} \\
           y_{01} & y_{11} & \frac{y_{00}+y_{02}}{2} & y_{21} & \frac{y_{10}+y_{12}}{2} & \frac{y_{01}+y_{03}}{2} \\
           y_{20} & \frac{y_{10}+y_{30}}{2} & y_{21} & \frac{y_{00}+y_{40}}{2} & \frac{y_{11}+y_{31}}{2} & y_{22} \\
           y_{11} & \frac{y_{01}+y_{21}}{2} & \frac{y_{10}+y_{12}}{2} & \frac{y_{11}+y_{31}}{2} & \frac{y_{00}+y_{20}+y_{02}+y_{22}}{4} & \frac{y_{11}+y_{13}}{2} \\
           y_{02} & y_{12} & \frac{y_{01}+y_{03}}{2} & y_{22} & \frac{y_{11}+y_{13}}{2} & \frac{y_{00}+y_{04}}{2} \\
         \end{array}
       \right].
\end{equation}

Let $\cP\in\reals[x]$ be a given polynomial with degree $\delta$, and $\textbf{p}=\{p_\alpha\}_{\alpha\in\mathbb{N}^n}$ denote its coefficient sequence  with respect to the standard monomial basis, i.e., $\cP(x)=\sum_{\alpha\in\mathbb{N}^n} p_\alpha x^\alpha$. For a given orthogonal basis, the localization matrix operator 
is defined as
\begin{equation}
\label{eq:local_orth}
M^o_d(\mathbf{y};\mathbf{p}) := L^o_{\mathbf y}\left( \cP \cB^o_d~{\cB^o_d}^T\right) = L_{T_{2d+\delta}^{-1}{\bf y}}\left( T_d \cP \cB_d~\cB_d^T T_d^T\right)=T_d M_d\left(T_{2d+\delta}^{-1}{\bf y};\mathbf{p}\right)T_d^T.
\end{equation}
Let $r:=\lceil\frac{\delta}{2}\rceil$. It is important to note that since $T_{2d}$ is invertible, $\{\mathbf{y}:\ M^o_d(\mathbf{y})\succeq 0,\ M^o_{d-r}(\mathbf{y};\mathbf{p})\succeq 0\}$ and $\{\mathbf{y}:\ M_d(\mathbf y)\succeq 0,\ M_{d-r}(\mathbf{y};\mathbf{p})\succeq 0\}$ are \emph{isomorphic}.
Hence, one can reformulate the SDP relaxation in \eqref{max_p4} using the new moment and localization matrix operators defined in \eqref{eq:moment_orth} and \eqref{eq:local_orth}, respectively; and the resulting problem stated in the given orthogonal basis is equivalent to \eqref{max_p4}. In order to illustrate the effect of orthogonal polynomial basis on the numerical behavior of the proposed method, we compared the two formulations of the simple example in~\eqref{max_exa1_def1}: the first formulation is given in \eqref{max_p4} using monomial basis, and the second formulation is obtained by replacing $M_d(.)$ and $M_{d-r_j}(.;\mathbf{p}_j)$ in \eqref{max_p4} with $M^o_d(.)$ and $M^o_{d-r_j}(.;\mathbf{p}_j)$, i.e., moment and localizing matrices in Chebyshev polynomial basis representations. In order to avoid matrix inversions as in \eqref{eq:moment_orth} and in \eqref{eq:local_orth}, we used Chebfun package~\cite{chebfun}, which can efficiently manipulate \emph{univariate} Chebyshev polynomials, to form $M^o_d(.)$ and $M^o_{d-r_j}(.;\mathbf{p}_j)$ that use \emph{multivariate} Chebyshev polynomials in a \emph{numerically stable} way; and solved the resulting SDP problems represented in the Chebyshev polynomial basis using SeDuMi. Figure~\ref{fig:2-3} shows that the approximations to the optimal probability $\mathbf{P}^*$ converge faster when Chebyshev polynomial basis is used as opposed to the standard monomial basis as relaxation order $d$ increases. For the problems in Chebyshev basis, the approximation $(x^o)^d$ to the optimal decision $x^*$ is formed similarly as $x^d$ -- see Section~\ref{sec:prob_approx}. For this example $x^d$ and $(x^o)^d$ sequences were close.
\begin{figure}[!h]
\centering
\includegraphics[scale=0.5]{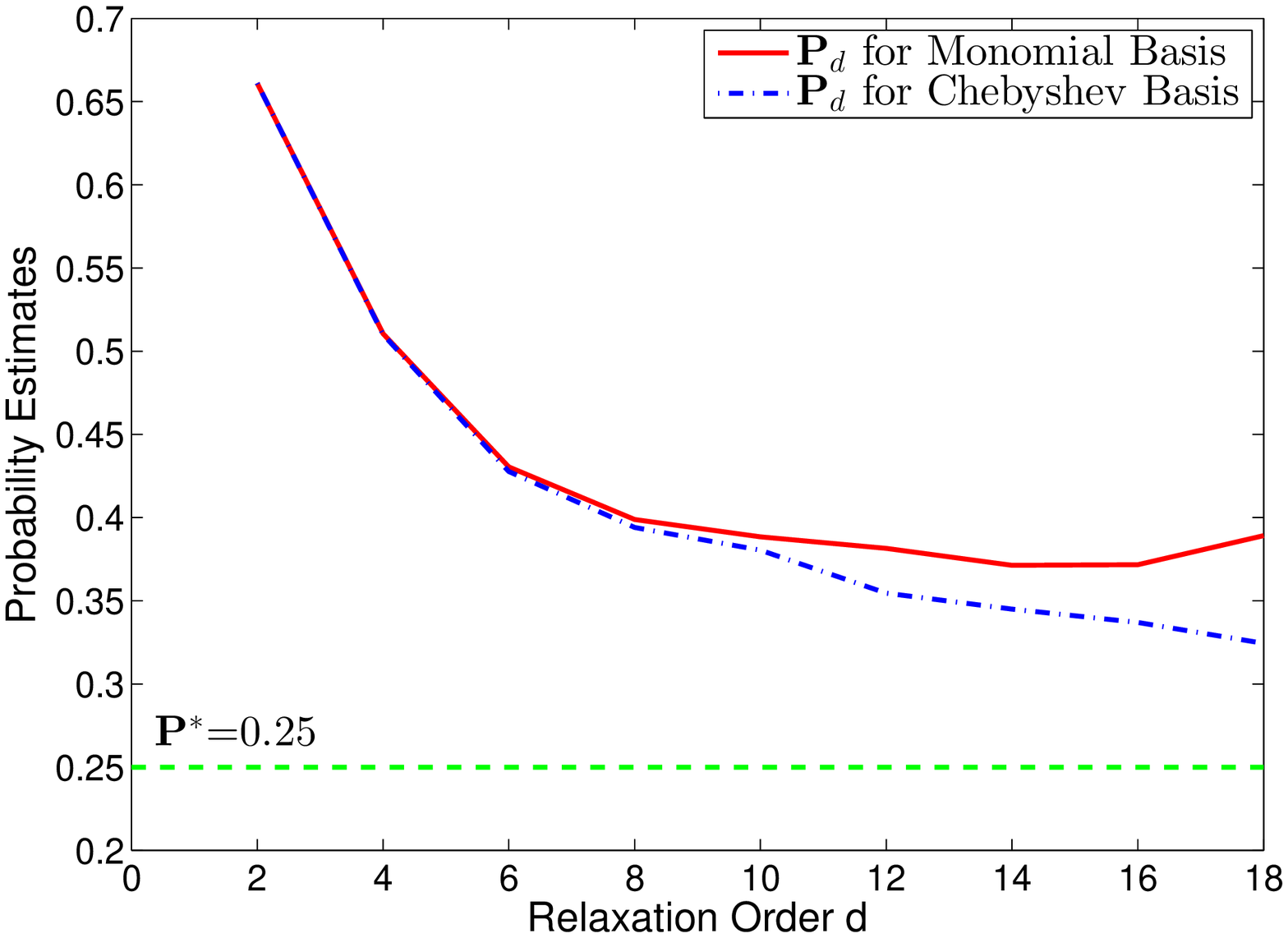}
\caption{$\mathbf{P}_d$ for monomial and Chebyshev polynomial bases}
\label{fig:2-3}       
\end{figure}
\section{Chance Optimization over a Union of Sets}
\label{sec:union_prob}
We now focus on the more general setting of the chance optimization problem in \eqref{intro_union}. Given polynomials $\mathcal{P}^{k}_j:\reals^n\times\reals^m\rightarrow\reals$ with degree $\delta^{(k)}_j$ for $j=1,\ldots,\ell_k$ and $k=1,\ldots,N$,  the semi-algebraic set of interest is $\cK=\cup_{k=1}^N\cK_k$, where
\begin{equation} \label{min_set}
\mathcal K_k =\left\{(x,q)\in\reals^n\times\reals^m: \mathcal{P}_j^{(k)}(x,q)\geq 0,\ j=1,\ldots,\ell_k\right\},\quad k=1,\ldots,N.
\end{equation}

Similar to the previous section, we need \emph{Putinar's property} to hold for $\cK_k$ for all $k=1,\ldots,N$. With the following assumption, we can ensure this.
\begin{assumption}
\label{assump:putinar2}
$\mathcal K=\cup_{k=1}^N\cK_k$ is bounded, where $\cK_k$ is defined in \eqref{min_set}.
\end{assumption}

Hence, as discussed in Remark~\ref{rem:prob_space}, we can assume without loss of generality that $\cK\subseteq \chi \times \mathcal Q$ and the probability measure $\mu_q\in\cM(\cQ)$, where $\chi =[-1,1]^n$ and $\cQ =[-1,1]^m$. Therefore, for all $(x,q)\in\cK$, we have $\norm{x}_2^2+\norm{q}_2^2\leq m+n$. Define $\cP^{(k)}_0(x,q):=m+n-\sum_{i=1}^nx_i^2-\sum_{i=1}^m q_i^2$ for all $k=1,\ldots, N$. $\cK_k$ can be represented as $\cK_k =\left\{(x,q): \mathcal{P}_j^{(k)}(x,q)\geq 0,\ j=0,\ldots,\ell_k\right\}$ --note that index $j$ starts from $0$. Since polynomials are continuous in $(x,q)$, the new representation of $\cK_k$ satisfies \emph{Putinar's property} for each $k$ and we still have $\cK=\cup_{k=1}^N\cK_k$.

The objective of this section is to provide a sequence of SDP relaxations to the chance optimization problem in \eqref{intro_union} with $N>1$,
 and show that the results presented in the previous sections can be easily extended for this case. More precisely, we start by providing an equivalent problem in the measure space and then develop relaxations based on moments of measures.

\subsection{ An Equivalent Problem}
As an intermediate step in the development of convex relaxations of \eqref{intro_union}, an equivalent problem in the measure space is provided below.\vspace{-0.3cm}
\begin{align}
\mathbf{P_{\mu_q}^*}:=&\ \sup_{\mu_k,~\mu_x} \sum_{k=1}^{N} \int d\mu_k, \label{min_p2} \\
\hbox{s.t.}\quad & \sum_{k=1}^{N} \mu_k \preccurlyeq \mu_x \times \mu_q, \label{eq:y_cons_min_p2}\subeqn\\
&\mu_x \hbox{ is a probability measure}, \label{eq:x_cons_min_p2}\subeqn\\
&\mu_x\in \cM(\chi),\quad \mu_k\in\cM(\cK_k)\quad k=1,\ldots,N. \label{eq:supp_min_p2}\subeqn
\end{align}
This problem is equivalent to the problem addressed in this paper in the following sense.
\begin{theorem} \label{Min Theo 1}
The optimization problems in \eqref{intro_union} and \eqref{min_p2} are equivalent in the following sense:
\begin{enumerate}[i)]
\item The optimal values are the same, i.e. $\mathbf{P^*}=\mathbf{P_{\mu_q}^*}$.
\item If an optimal solution to \eqref{min_p2} exists, call it $\mu^*_x$, then any $x^*\in supp(\mu^*_x)$ is an optimal solution to \eqref{intro_union}.
\item If an optimal solution to \eqref{intro_union} exists, call it $x^*$, then Dirac measure at $x^*$, $\mu_x = \delta_{x^*}$ and $\mu = \delta_{x^*} \times \mu_q$ is an optimal solution to \eqref{min_p2}.
\end{enumerate}
\end{theorem}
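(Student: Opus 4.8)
The plan is to follow the same route as the proof of Theorem~\ref{Max Theo 1}; the only genuinely new ingredient is a \emph{union} version of the volume identity in Lemma~\ref{preliminary result_volume}. As in Remark~\ref{rem:prob_space} I work on the reduced space $(\cQ,\Sigma_q,\mu_q)$ and assume $\cK=\bigcup_{k=1}^N\cK_k\subseteq\chi\times\cQ$. For each $k$ set $\cF_k(x):=\{q\in\cQ:\ \cP_j^{(k)}(x,q)\geq0,\ j=1,\ldots,\ell_k\}$ and $\cF(x):=\bigcup_{k=1}^N\cF_k(x)$, so that $\cF(x)$ is exactly the $q$-slice $\{q:(x,q)\in\cK\}$, the objective of \eqref{intro_union} is $\mu_q(\cF(x))$, and $\mathbf{P^*}=\sup_{x\in\chi}\mu_q(\cF(x))$. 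Each $\cK_k$ is closed (non-strict polynomial inequalities) and bounded by Assumption~\ref{assump:putinar2}, hence compact. Exactly as in Theorem~\ref{Max Theo 1}, since $\mu_q(\cF(x))\leq\mathbf{P^*}$ for every $x\in\chi$ and testing a Dirac $\delta_x$ is admissible, one gets
\[
\mathbf{P^*}=\sup_{\mu_x\in\cM(\chi)}\Big\{\int_\chi\mu_q(\cF(x))\,d\mu_x:\ \mu_x(\chi)=1\Big\}.
\]

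The key step is the following generalization of Lemma~\ref{preliminary result_volume}: for any finite Borel measure $\mu_1$ on a compact $\cB$ and compact sets $\cK_1,\ldots,\cK_N\subseteq\cB$,
\begin{equation}\label{eq:union_vol}
\mu_1\Big(\bigcup_{k=1}^N\cK_k\Big)=\sup\Big\{\sum_{k=1}^N\int d\mu_k:\ \mu_k\in\cM(\cK_k),\ \sum_{k=1}^N\mu_k\preccurlyeq\mu_1\Big\}.
\end{equation}
For ``$\geq$'', any feasible family satisfies $supp(\sum_k\mu_k)\subseteq\bigcup_k\cK_k$ and $\sum_k\mu_k\preccurlyeq\mu_1$, so $\sum_k\int d\mu_k=(\sum_k\mu_k)(\bigcup_k\cK_k)\leq\mu_1(\bigcup_k\cK_k)$. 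For ``$\leq$'' I disjointify: put $A_1:=\cK_1$ and $A_k:=\cK_k\setminus\bigcup_{j<k}\cK_j$ for $k\geq2$, which are Borel, pairwise disjoint, with $\bigcup_kA_k=\bigcup_k\cK_k$, and take $\mu_k:=\mu_1|_{A_k}$. Since $\overline{A_k}\subseteq\cK_k$, we have $\mu_k\in\cM(\cK_k)$; moreover $\sum_k\mu_k=\mu_1|_{\bigcup_k\cK_k}\preccurlyeq\mu_1$ and $\sum_k\int d\mu_k=\mu_1(\bigcup_k\cK_k)$, attaining the bound. This elementary disjointification is the crux of the extension to $N>1$.

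With \eqref{eq:union_vol} in hand, part i) follows. By Tonelli, $\int_\chi\mu_q(\cF(x))\,d\mu_x=\int_{\chi}\int_{\cF(x)}d\mu_q\,d\mu_x=(\mu_x\times\mu_q)(\cK)$. Applying \eqref{eq:union_vol} with $\mu_1=\mu_x\times\mu_q$ on $\cB=\chi\times\cQ$ and the sets $\cK_k$ rewrites $(\mu_x\times\mu_q)(\cK)$ as the inner supremum over families $(\mu_k)_k$ with $\mu_k\in\cM(\cK_k)$ and $\sum_k\mu_k\preccurlyeq\mu_x\times\mu_q$; taking the outer supremum over probability measures $\mu_x\in\cM(\chi)$ recovers precisely \eqref{min_p2}. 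Hence $\mathbf{P^*}=\mathbf{P_{\mu_q}^*}$.

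Parts iii) and ii) then follow from this chain of equalities. For iii), given an optimal $x^*$, set $\mu_x=\delta_{x^*}$ and take the restriction measures $\mu_k=(\delta_{x^*}\times\mu_q)|_{A_k}$ from \eqref{eq:union_vol}; then $\sum_k\int d\mu_k=(\delta_{x^*}\times\mu_q)(\cK)=\mu_q(\cF(x^*))=\mathbf{P^*}=\mathbf{P_{\mu_q}^*}$, so this family is optimal, with $\sum_k\mu_k=(\delta_{x^*}\times\mu_q)|_\cK$ playing the role of the ``$\mu=\delta_{x^*}\times\mu_q$'' in the statement. For ii), if $(\{\mu_k^*\},\mu_x^*)$ is optimal, the equalities above force $\int_\chi\mu_q(\cF(x))\,d\mu_x^*=\mathbf{P^*}$ while $\mu_q(\cF(\cdot))\leq\mathbf{P^*}$ and $\mu_x^*(\chi)=1$, so $\mu_q(\cF(x))=\mathbf{P^*}$ for $\mu_x^*$-a.e.\ $x$. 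I expect the main obstacle to be upgrading this ``almost everywhere'' statement to ``every $x^*\in supp(\mu_x^*)$'': this needs the set $\{x:\mu_q(\cF(x))=\mathbf{P^*}\}$ to be closed, which I would obtain from upper semicontinuity of $x\mapsto\mu_q(\cF(x))$---a consequence of $\cK$ being closed (so that $\cF$ has closed graph and is outer semicontinuous into the compact $\cQ$) together with continuity from above of the finite measure $\mu_q$. Since $\mu_q(\cF(\cdot))\leq\mathbf{P^*}$, the optimal set equals $\{x:\mu_q(\cF(x))\geq\mathbf{P^*}\}$, which is closed and of full $\mu_x^*$-measure, hence contains $supp(\mu_x^*)$; every such $x^*$ is therefore optimal for \eqref{intro_union}.
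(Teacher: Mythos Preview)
Your proof is correct and follows essentially the same route as the paper: both reduce to the single-measure formulation \eqref{eq:product_union} via Theorem~\ref{Max Theo 1} and then use the disjointification $A_k=\cK_k\setminus\bigcup_{j<k}\cK_j$ with $\mu_k=\mu|_{A_k}$ to pass between \eqref{eq:product_union} and the multi-measure problem \eqref{min_p2}; you simply package this step as a standalone union version of Lemma~\ref{preliminary result_volume}, whereas the paper applies the same construction inline. Your treatment of part~ii)---arguing upper semicontinuity of $x\mapsto\mu_q(\cF(x))$ from the closed graph of $\cF$ and outer regularity of $\mu_q$ to upgrade ``$\mu_x^*$-a.e.'' to ``on all of $supp(\mu_x^*)$''---is in fact more careful than the paper, which only writes out part~i) and leaves ii) and iii) implicit.
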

\begin{proof}
Let $\mathbf{P^*}$ denote the optimal value of \eqref{intro_union}, and $\cK=\cup_{k=1}^N\cK_k$, where $\cK_k$ is defined in \eqref{min_set}. It can be proven as in Theorem~\ref{Max Theo 1} that
\begin{equation} \label{eq:product_union}
\mathbf{P^*}=\sup_{\mu_x\in\cM(\chi)} \sup_{\mu\in\cM(\cK)} \int d\mu \quad \hbox{s.t.}\quad \mu \preccurlyeq \mu_x \times \mu_q,\ \mu_x(\chi)=1.
\end{equation}
Let $\{\mu_k\}_{k=1}^N$ and $\mu_x$ be a feasible solution to \eqref{min_p2} with objective value $P$. Since $\mu_k\in\cM(\cK_k)\subset\cM(\cK)$ for all $k=1,\ldots,N$, we have $\sum_{k=1}^N\mu_k\in\cM(\cK)$. Hence, $\left(\sum_{k=1}^N\mu_k,~\mu_x\right)$ is a feasible solution to \eqref{eq:product_union} with objective value $P$, as well. Clearly, this shows that $\mathbf{P_{\mu_q}^*}\leq \mathbf{P^*}$, where $\mathbf{P_{\mu_q}^*}$ denotes the optimal value of \eqref{min_p2}.

Suppose that $(\mu,~\mu_x)$ is a feasible solution to \eqref{eq:product_union} with objective value $P$. Define $\{\mu_k\}_{k=1}^N$ as follows
\begin{align}
\label{eq:muk_dfn}
\mu_k(S):=\mu\left(S\cap\left(\cK_k\setminus\bigcup_{j=0}^{k-1}\cK_j\right)\right),\quad \forall S\in\Sigma(\cK),
\end{align}
for all $k=1,\ldots,N$, where $\cK_0:=\emptyset$ and $\Sigma(\cK)$ denotes the Borel $\sigma$-algebra over $\cK$. Definition in \eqref{eq:muk_dfn} implies that $\mu_k\in\cM(\cK_k)$ for all $k=1,\ldots,N$, and $\sum_{k=1}^N\mu_k(S)=\mu(S)$ for all $S\in\Sigma(\cK)$. Hence, $\{\mu_k\}_{k=1}^N$ and $\mu_x$ form a feasible solution to \eqref{max_p2} with objective value equal to $P$. Therefore, $\mathbf{P_{\mu_q}^*}=\mathbf{P^*}$.
\end{proof}
\vspace{-0.3cm}
\subsection{Semidefinite Relaxations}
In this section, a sequence of semidefinite programs is provided which can arbitrarily approximate the optimal solution of \eqref{min_p2}. As before, this is done by considering moments of measures instead of the measures themselves.
Define the following optimization problem indexed by the relaxation order $d$.

\begin{align}
\mathbf{P_d}:= &\sup_{\mathbf{y}_k\in\reals^{S_{n+m,2d}},\ \mathbf{y_x}\in\reals^{S_{n,2d}}} \sum_{k=1}^{N} \left(\mathbf{y}_k\right)_\mathbf{0}, \label{min_p3}\\
\hbox{s.t.}\quad & M_d({\mathbf y_k})\succcurlyeq 0,\ M_{d-r^{(k)}_j}\left(\mathbf{y}_k;\mathbf{p}^{(k)}_j\right)\succcurlyeq 0 , \quad j=1,\ldots ,l_k,\quad k=1,\ldots,N \label{eq:y_cons_i_min_p3}\subeqn\\
&M_d(\mathbf y_\mathbf x)\succcurlyeq 0,\ \Vert \mathbf y_\mathbf x \Vert_\infty \leq 1,\ \left(\mathbf{y_x}\right)_\mathbf{0}=1, \label{eq:x_cons_i_min_p3}\subeqn\\
&M_d\left(A_d\mathbf y_\mathbf x-\sum_{k=1}^{N}{\mathbf y_k}\right)\succcurlyeq 0, \label{eq:moment_ineq_i_min_p3}\subeqn
\end{align}
where $\delta^{(k)}_j$ is the degree of $\cP^{(k)}_j$, $r^{(k)}_j:=\left\lceil\frac{\delta^{(k)}_j}{2}\right\rceil$ for all $1\leq j\leq \ell_k$ and $1\leq k\leq N$; and $A_d:\reals^{S_{n,2d}}\rightarrow\reals^{S_{n+m,2d}}$ is defined similarly to $\mathbf{A}$ in \eqref{max_p3}. Indeed, let $\mathbf{y_q}:=\{y_{q_\beta}\}_{\beta\in\mathbb{N}_{2d}^m}$ be the truncated moment sequence of $\mu_q$. Then for any given $\mathbf{y_x}=\{y_{x_\alpha}\}_{\alpha\in\mathbb{N}_{2d}^n}$, $\mathbf{y}=A_d\mathbf{y_x}$ such that $y_\theta=y_{q_\beta}y_{x_\alpha}$ for all $\theta=(\beta,\alpha)\in\mathbb{N}_{2d}^{n+m}$.


Next, we show that the sequence of optimal solutions to the SDPs in \eqref{min_p3} converges to the solution of the infinite dimensional SDP in \eqref{min_p2}. More precisely, we have the following result.
\begin{theorem} \label{Max theo 3}
For all $d\geq 1$, there exists an optimal solution $\left(\{\mathbf{y}_k^d\}_{k=1}^N,\mathbf{y}^d_\mathbf{x}\right)$ to \eqref{min_p3} with the optimal value $\mathbf{P}_d$. Moreover,
\begin{enumerate}[i)]
\item $\lim_{d\in\integers_+}\mathbf{P}_d=\mathbf{P^*}$, the optimal value of \eqref{intro_union}.
\item Let $\cS:=\left\{\left(\{\mathbf{y}_k^d\}_{k=1}^N,\mathbf{y}^d_\mathbf{x}\right)\right\}_{d\in\integers_+}$ such that each element is obtained by zero-padding $\mathbf{y}^d$ and $\mathbf{y}_k^d$ for $1\leq k\leq N$. There exists an accumulation point of $\cS$ in the weak-$\star$ topology of $\ell_\infty$, and for every accumulation point of $\cS$, there exists corresponding representing measures $\left(\{\mu_k^*\}_{k=1}^N,\mu^*_x\right)$ that is optimal to \eqref{min_p2} and any $x^*\in supp(\mu_x^*)$ is optimal to \eqref{intro_union}.
\end{enumerate}
\end{theorem}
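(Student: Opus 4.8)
The plan is to mirror the proof of Theorem~\ref{Max theo 2} almost verbatim, replacing the single sequence $\mathbf{y}$ and its representing measure $\mu$ by the $N$-tuples $\{\mathbf{y}_k\}_{k=1}^N$ and $\{\mu_k\}_{k=1}^N$, and invoking Theorem~\ref{Min Theo 1} wherever Theorem~\ref{Max Theo 1} was used in the $N=1$ argument. A preliminary remark: as noted just before the theorem, appending the ball constraint $\cP^{(k)}_0(x,q)\geq 0$ makes the (new) representation of each $\cK_k$ satisfy Putinar's property, so Lemma~\ref{preliminary result_measure 2} is applicable to every $\cK_k$. The first substantive step is to show that for fixed $d\geq 1$ the feasible set of \eqref{min_p3} is compact. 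If $\big(\{\mathbf{y}_k\}_{k=1}^N,\mathbf{y_x}\big)$ is feasible, then $\norm{\mathbf{y_x}}_\infty\leq 1$ and Lemma~\ref{lemma1} applied to $\mu_q$ on $\cQ=[-1,1]^m$ give $\norm{A_d\mathbf{y_x}}_\infty\leq 1$; writing $\bar{\mathbf{y}}:=A_d\mathbf{y_x}$, nonnegativity of the diagonal of $M_d\big(\bar{\mathbf{y}}-\sum_k\mathbf{y}_k\big)$ forces $\bar{y}_{2\alpha}\geq\sum_k(\mathbf{y}_k)_{2\alpha}$ for all $\alpha\in\mathbb{N}^{n+m}_d$, while $M_d(\mathbf{y}_k)\succcurlyeq 0$ forces $(\mathbf{y}_k)_{2\alpha}\geq 0$; hence $0\leq(\mathbf{y}_k)_{2\alpha}\leq\bar{y}_{2\alpha}\leq 1$ for every $k$. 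Applying Lemma~\ref{mom_bound 1} to each $\mathbf{y}_k$ then gives $|(\mathbf{y}_k)_\alpha|\leq 1$ for all $\alpha\in\mathbb{N}^{n+m}_{2d}$; with $\norm{\mathbf{y_x}}_\infty\leq 1$ this bounds the whole feasible set, which is closed since the PSD cone is closed and all maps in \eqref{min_p3} are linear. Compactness yields an optimizer $\big(\{\mathbf{y}^d_k\}_{k=1}^N,\mathbf{y}^d_\mathbf{x}\big)$ with value $\mathbf{P}_d$.

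Next I would establish the value convergence. As in the first half of the proof of Lemma~\ref{max lem 1}, truncating the moment sequences of any feasible point of \eqref{min_p2} produces a feasible point of \eqref{min_p3} with the same objective (using Lemmas~\ref{lemma1} and~\ref{lem:nec_cond}, and not Putinar's property), so $\mathbf{P}_d\geq\mathbf{P^*_{\mu_q}}=\mathbf{P^*}$ by Theorem~\ref{Min Theo 1}; moreover, since order-$d$ moment and localizing matrices contain the order-$(d-1)$ ones as principal submatrices, $\{\mathbf{P}_d\}_{d\in\integers_+}$ is nonincreasing, hence convergent with $\lim_d\mathbf{P}_d\geq\mathbf{P^*}$. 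For the reverse inequality I would zero-pad each $\mathbf{y}^d_k$ and $\mathbf{y}^d_\mathbf{x}$ to elements of $\ell_\infty$; by the bounds above they all lie in the closed unit ball $\cB_\infty$, which is weak-$\star$ sequentially compact by the sequential Banach--Alaoglu theorem ($\ell_\infty=(\ell_1)^*$ with $\ell_1$ separable). Extracting a single subsequence $\{d_j\}$ along which all $N+1$ sequences converge weak-$\star$ — only finitely many extractions are needed — produces entrywise limits $\{\mathbf{y}^*_k\}_{k=1}^N$ and $\mathbf{y}^*_\mathbf{x}$; fixing $d$, letting $j\to\infty$ in the order-$d$ constraints of \eqref{min_p3} (positive semidefiniteness and the sup-norm bound survive entrywise limits) and then letting $d\to\infty$ shows that $\big(\{\mathbf{y}^*_k\}_{k=1}^N,\mathbf{y}^*_\mathbf{x}\big)$ satisfies the $d=\infty$ version of the constraints in \eqref{min_p3}.

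Finally I would pass back to measures exactly as in the second half of Lemma~\ref{max lem 1}. Since each $\cK_k$ satisfies Putinar's property, Lemma~\ref{preliminary result_measure 2} gives a representing measure $\mu^*_k\in\cM(\cK_k)$ for $\mathbf{y}^*_k$; Lemma~\ref{lem:suf_cond_prob} together with Lemma~\ref{lemma1} gives a representing probability measure $\mu^*_x\in\cM(\chi)$ for $\mathbf{y}^*_\mathbf{x}$; consequently $A\mathbf{y}^*_\mathbf{x}$ represents $\mu^*_x\times\mu_q$, and since $\cK\subset[-1,1]^{n+m}$ the inequality $M_\infty\big(A\mathbf{y}^*_\mathbf{x}-\sum_k\mathbf{y}^*_k\big)\succcurlyeq 0$ forces $\sum_k\mu^*_k\preccurlyeq\mu^*_x\times\mu_q$ (the difference sequence is bounded with $M_\infty\succcurlyeq 0$, hence has a representing positive measure on the cube, and a measure on a compact set is determined by its moments). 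Thus $\big(\{\mu^*_k\}_{k=1}^N,\mu^*_x\big)$ is feasible for \eqref{min_p2} with objective $\sum_k(\mathbf{y}^*_k)_\mathbf{0}=\lim_j\mathbf{P}_{d_j}=\lim_d\mathbf{P}_d\geq\mathbf{P^*}=\mathbf{P^*_{\mu_q}}$; feasibility forces equality, which proves part (i) and shows that $\big(\{\mu^*_k\}_{k=1}^N,\mu^*_x\big)$ is optimal for \eqref{min_p2}. Every weak-$\star$ accumulation point of $\cS$ arises as such a subsequential limit, so the argument applies to all of them, and Theorem~\ref{Min Theo 1}(ii) completes part (ii) by showing that any $x^*\in supp(\mu^*_x)$ is optimal for \eqref{intro_union}. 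I expect the only genuinely new ingredient relative to the $N=1$ case to be the per-index estimate $0\leq(\mathbf{y}_k)_{2\alpha}\leq\bar{y}_{2\alpha}$ in the compactness step, which combines the single domination constraint \eqref{eq:moment_ineq_i_min_p3} with $M_d(\mathbf{y}_k)\succcurlyeq 0$; everything else is a mechanical transcription of the proof of Theorem~\ref{Max theo 2}.
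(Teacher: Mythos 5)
Your proposal is correct and follows essentially the same route as the paper: the paper's proof establishes boundedness via exactly your per-index estimate (the diagonal entries of each $M_d(\mathbf{y}_k)\succcurlyeq 0$ are nonnegative, so the single domination constraint \eqref{eq:moment_ineq_i_min_p3} bounds each $\mathbf{y}_k$ separately, and Lemma~\ref{mom_bound 1} finishes), and then states that the rest is a verbatim transcription of Theorem~\ref{Max theo 2}, which is precisely what you carry out.
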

\begin{proof}
Let $\{\mathbf{y}_k\}_{k=1}^N\subset\reals^{S_{n+m,2d}}$ and $\mathbf{y}_\mathbf{x}\in\reals^{S_{n,2d}}$ be a feasible solution to \eqref{min_p3}. As in Theorem~\ref{Max theo 2}, it can be shown that
\begin{equation}
\label{eq:moment_bound_union}
\max \left\{ \left(\mathbf{y}\right)_\mathbf{0}, \max_{i=1,\ldots,n+m} L_{\mathbf{y}}\left(x_i^{2d}\right) \right\}\leq 1,
\end{equation}
where $\mathbf{y}:=\sum_{k=1}^N\mathbf{y}_k$. Note that $L_{\mathbf{y}}\left(x_i^{2d}\right)=\sum_{k=1}^N L_{\mathbf{y}_k}\left(x_i^{2d}\right)$, and $\{L_{\mathbf{y}_k}\left(x_i^{2d}\right)\}_{i=1}^{n+m}$ is a subset of diagonal elements of $M_d({\mathbf y_k})\succeq\mathbf{0}$ for each $k\in\{1,\ldots,N\}$. Hence, $L_{\mathbf{y}_k}\left(x_i^{2d}\right)\geq 0$ for all $i\in\{1,\ldots,n+m\}$ and $k\in\{1,\ldots,N\}$. Therefore, \eqref{eq:moment_bound_union} implies that $\max \left\{ \left(\mathbf{y}_k\right)_\mathbf{0}, \max_{i=1,\ldots,n+m} L_{\mathbf{y}_k}\left(x_i^{2d}\right) \right\}\leq 1$ for all $k\in\{1,\ldots,N\}$. Lemma~\ref{mom_bound 1} implies that $|(y_k)_\alpha|\leq 1$ for all $\alpha\in\mathbb{N}^{n+m}_{2d}$. Therefore, the feasible region is bounded. The rest of the proof is exactly the same as in Theorem~\ref{Max theo 2}.
\end{proof}

\section{Implementation and Numerical Results}
In previous sections, we showed that chance optimization problem in \eqref{intro_union} can be relaxed to a sequence of SDPs. In this section, we go one step further to improve approximation quality of the relaxed problems in practice and implement an efficient first-order algorithm to solve the resulting SDP relaxations.
\subsection{Regularized Chance Optimization Using Trace Norm}
As shown in Theorem~\ref{Max Theo 1} and Theorem~\ref{Min Theo 1}, if the chance optimization problems in \eqref{intro_max} and \eqref{intro_union} have unique optimal solution $x^*$, then the optimal distribution $\mu^*_x$ is a Dirac measure whose mass is concentrated on the single point $x^*$, i.e., its support is the singleton $\lbrace x^* \rbrace$. Such distributions, have moment matrices with rank one. To improve the solution quality of the algorithm, one can incorporate this observation in the formulation of the relaxed problem. For the sake of notational simplicity, in this section we will consider the regularized version of chance optimization problem \eqref{max_p4} for presenting the algorithm: 
\begin{equation} \label{rank problem}
 \min_{\mathbf{y}\in\reals^{S_{n+m,2d}},\ \mathbf{y_x}\in\reals^{S_{n,2d}}}   \omega_r \Tr( M_d(\mathbf{y_x}) )  -  (\mathbf{y})_\mathbf{0}\quad \hbox{subject to}\quad \eqref{eq:y_cons_i},~\eqref{eq:x_cons_i},~\eqref{eq:moment_ineq_i}
\end{equation}
for some $\omega_r > 0$, where $\Tr(.)$ denotes the trace function. Our objective is to achieve the maximum probability with a low-rank moment matrix $ M_d(\mathbf{y}^*_\mathbf{x})$, hopefully with rank 1. To this end, we regularize the objective with trace norm. Since $ M_d(\mathbf{y^*_x}) \succcurlyeq 0$, $ \Tr(M_d(\mathbf{y}^*_\mathbf{x}))$ is equal to sum of singular values of $ M_d(\mathbf{y}^*_\mathbf{x})$, which is called the nuclear norm of $M_d(\mathbf{y}^*_\mathbf{x})$. This is a well known approach for obtaining low-rank solutions. Indeed, the nuclear norm is the convex envelope of the rank function and, in practice, produces good results; see \cite{r_nuc norm 1} and \cite{r_nuc norm 2} for details.

To be able to solve the SDP in \eqref{rank problem} involving large scale matrices in practice, one need to implement an efficient convex optimization algorithm. Recently, a first-order augmented Lagrangian algorithm ALCC has been proposed in \cite{r_Allc} to deal with regularized conic convex problems. We will adapt this algorithm to solve SDPs of the form in \eqref{rank problem}. In the following section, we briefly discuss the algorithm ALCC.
\subsection{First-Order Augmented Lagrangian Algorithm}
\label{sec:ALCC}
Consider the optimization problem:
\begin{equation}
\label{eq:composite_problem}
(P):\ p^*=\min\{\rho(x)+\gamma(x):\ \ A(x)-b\in\cC\},
\end{equation}
where $\gamma:\reals^n\rightarrow\reals$ is a convex function such that $\grad \gamma$ is Lipschitz continuous with constant $L_\gamma$, $\rho:\reals^n\rightarrow\reals\cup\{+\infty\}$ is a closed convex function such that $\Delta:=\dom(\rho)$ is convex compact set, $A:\reals^n\rightarrow\reals^m$ is a \emph{linear map}, and $\cC\subset\reals^m$ is a closed convex cone. Let $\cC^*:=\lbrace  \theta\in \mathbb{R}^n: \langle z,\theta \rangle \geq 0,\ \forall z\in C \rbrace$ denote the dual cone of $\cC$, and $B>0$ denote the diameter of $\Delta$, i.e., $B=\max\{\norm{x-y}_2:\ x,y\in\Delta\}$; and we assume that $B$ is given. Given a penalty parameter $\nu>0$ and Lagrangian dual multiplier $\theta\in\cC^*$, the augmented Lagrangian for (P) in \eqref{eq:composite_problem} is given by
\begin{equation}
\cL(x;\nu,\theta):=\tfrac{1}{\nu}\left(\rho(x)+\gamma(x)\right)+\tfrac{1}{2}d_{\cC}(A(x)-b-\theta)^2,
\end{equation}
where $d_{\cC}:\reals^m\rightarrow\reals$ denotes the distance function to cone $\cC$, i.e., $d_{\cC}(\bar{z}):=\norm{\bar{z}-\Pi_{\cC}(\bar{z})}_2$, and $\Pi_{\cC}(\bar{z}):=\argmin\{\norm{z-\bar{z}}_2:\ z\in\cC\}$ denotes the Euclidean projection of $\bar{z}$ onto $\cC$. Given $\nu_k>0$ and $\theta_k\in\cC^*$, we define $\cL_k(x):=\cL(x;\nu_k,\theta_k)$ and $\cL_k^*:=\min_x \cL_k(x)$. Let $f_k:\reals^n\rightarrow\reals$ such that $f_k(x):=\tfrac{1}{\nu_k}\gamma(x)+\tfrac{1}{2}d_{\cC}(A(x)-b-\theta)^2$; hence, $\cL_k^*=\min_x \tfrac{1}{\nu_k}\rho(x)+f_k(x)$. It is important to note that $f_k$ is a convex function with Lipschitz continuous gradient $\grad f_k(x)=\tfrac{1}{\nu_k}\gamma(x)-A^*\left(\Pi_{\cC^*} (\theta_k+b- A(x))\right)$; and the Lipschitz constant of $\grad f_k$ is equal to $L_k := \frac{1}{\nu_k} L_\gamma + \sigma_{\max}^2(A)$, where $A^*: \mathbb{R}^{m} \rightarrow \mathbb{R}^n $ denotes the adjoint operator of $A: \mathbb{R}^{n} \rightarrow \mathbb{R}^m$, and $\sigma_{\max}(A)$ denotes the maximum singular value of the linear map $A$. Therefore, given $\epsilon_k>0$, an $\epsilon_k$-optimal solution, $\tilde{x}_k$, to $\cL_k^*:=\min_x \cL_k(x)$ can be efficiently computed such that $\cL_k(\tilde{x}_k)-\cL_k^*\leq\epsilon_k$ using an Accelerated Proximal Gradient~(APG) algorithm~\cite{Beck09_1J,Nesterov05,Nesterov04,Tseng08} within $\ell_k^{\max}(\epsilon_k):=B~\sqrt{\frac{2L_k}{\epsilon_k}}$ APG iterations. In each APG iteration, $\grad f_k$, $\Pi_{\cC^*}$ and proximal map of $\rho$ are all evaluated \emph{once}.

ALCC algorithm proposed in~\cite{r_Allc} can generate a minimizing sequence $\{x_k\}$ to (P) in~\eqref{eq:composite_problem} by \emph{inexactly} solving a sequence of subproblems $\min_x \cL_k(x)$. In particular, given inexact computation parameters $\alpha_k>0$ and $\eta_k>0$, $x_k$ is computed such that either one of the following conditions holds:
\begin{align}
\cL_k(x_k)-\cL_k^*\leq\tfrac{\alpha_k}{\nu_k}, \label{eq:eps-cond}\\
\exists s_k\in\partial \cL_k(x_k) \quad \hbox{ such that } \quad \norm{s_k}_2\leq \tfrac{\eta_k}{\nu_k}, \label{eq:subgrad-cond}
\end{align}
where $\partial\cL_k(x_k)$ denotes the subdifferential of $\cL_k$ at $x_k$ -- the inexact optimality criteria in \eqref{eq:eps-cond} and \eqref{eq:subgrad-cond} have been successfully implemented in other first-order augmented Lagrangian algorithms in \cite{aybat12,aybat12_fal,aybat15_fal} as well. Then dual Lagrangian multiplier is updated: $\theta_{k+1}=\tfrac{\nu_k}{\nu_{k+1}}\Pi_{C^*}(\theta_k+b-A(x_k))$. For given $c,\beta>1$, fix the parameter sequence as follows: $\nu_k=\beta^k\nu_0$, $\alpha_k=\frac{1}{k^{2(1+c)}\beta^k}\alpha_0$, and $\eta_k=\frac{1}{k^{2(1+c)}\beta^k}\eta_0$ for all $k\geq 1$; and let $\{x_k,\theta_k\}\subset\Delta\times C^*$ be the primal-dual ALCC iterate sequence. \textbf{Theorem~3.10} in~\cite{r_Allc} shows that $\lim_k \theta_k\nu_k$ exists and it is an optimal solution to the dual problem. Moreover, \textbf{Theorem~3.8} shows that for all $\epsilon>0$, $x_k$ is $\epsilon$-feasible, i.e., $d_{\cC}(Ax_k-b)\leq\epsilon$, and $\epsilon$-optimal, i.e., $|\rho(x_k)+\gamma(x_k)-p^*|\leq\epsilon$ within $\log(1/\epsilon)$ ALCC iterations, i.e., $k=\cO(\log(1/\epsilon))$, which requires $\cO(\epsilon^{-1}\log(\epsilon^{-1}))$ 
APG iterations in total. Moreover, every limit point of $\{x_k\}$ is optimal (when $A\in\reals^{m\times n}$ is surjective, the techniques used for proving \textbf{Theorem~4} in \cite{aybat12} can be used to improve the rate result to $\cO(1/\epsilon)$).

Now consider the following problem $p^*=\min_{x\in\Delta}\{\gamma(x):\ A(x)-b\in\cC\}$, where $\Delta\subset\reals^n$ is a compact convex set. Note that 
this problem can be written as a special case of \eqref{eq:composite_problem} by setting $\rho(x)=\mathbf{1}_\Delta(x)$, the indicator function of the set $\Delta$, i.e., $\mathbf{1}_\Delta(x)=0$, if $x\in\Delta$, and equal to $+\infty$, if $x\not\in\Delta$. In Figure~\ref{tab:1}, we present the ALCC algorithm customized to solve $p^*=\min_{x\in\Delta}\{\gamma(x):\ A(x)-b\in\cC\}$. Note that Step~\ref{algeq:grad_comp} and Step~\ref{algeq:proj_grad} in Figure~\ref{tab:1} are the bottleneck steps (one $\grad\gamma$ evaluation and two projections: one onto $\cC^*$, and one onto $\Delta$) -- in Step~\ref{algeq:grad_comp} $\grad f_k$ is evaluated at $x^{(2)}_\ell$, and then in Step~\ref{algeq:proj_grad} $x^{(1)}_\ell$ is computed via a projected gradient step of length $1/L_k$. In this customized version, ALCC iterate $x_k$ is set to $x_\ell^{(1)}$ whenever either $\ell>\ell_k^{\max}$ or $\norm{x_\ell^{(1)}-x_\ell^{(2)}}_2\leq\tfrac{\eta_k}{\nu_k}$. Note that $\ell_k^{\max}:=k^{1+c}\beta^k B\sqrt{\frac{2\nu_0 L_k}{\alpha_0}}$, which is equal to $\ell_k^{\max}(\epsilon_k)$ when $\epsilon_k=\tfrac{\alpha_k}{\nu_k}$. Therefore, if $\ell>\ell_k^{\max}$, then $\cL_k(x_k)-\cL_k^*\leq\tfrac{\alpha_k}{\nu_k}$ -- this follows from the complexity of Accelerated Proximal Gradient algorithm (lines 9-19 in Figure~5.1) running on $\min \cL_k(x)$; next we'll show that if $\norm{x_\ell^{(1)}-x_\ell^{(2)}}_2\leq\tfrac{1}{2L_k}\tfrac{\eta_k}{\nu_k}$, then \eqref{eq:subgrad-cond} holds. For $\rho(x)=\mathbf{1}_\Delta(x)$, we have $\cL_k(x)=\rho(x)+f_k(x)$. Suppose that for some $\ell$, $\norm{x_\ell^{(1)}-x_\ell^{(2)}}_2\leq\tfrac{1}{2L_k}\tfrac{\eta_k}{\nu_k}$ holds. Note that $g_\ell$ computed in Line 11 is equal to $\grad f_k(x_\ell^{(2)})$; thus $x_\ell^{(1)}$ computed in Line~12 is equal to $\Pi_\Delta(x_\ell^{(2)}-\grad f_k(x_\ell^{(2)})/L_k)$, where $L_k:=\tfrac{1}{\nu_k}L_\gamma+\sigma_{\max}^2(A)$ is the Lipschitz constant of $\grad f_k$. One can easily show that $x_\ell^{(2)}-\grad f_k(x_\ell^{(2)})/L_k-x_\ell^{(1)}\in\partial\rho(x_\ell^{(1)})$; and since $\rho$ is the indicator function, we also have $L_k\left(x_\ell^{(2)}-x_\ell^{(1)}\right)-\grad f_k(x_\ell^{(2)})\in\partial\rho(x_\ell^{(1)})$. Hence, $s_k:=L_k\left(x_\ell^{(2)}-x_\ell^{(1)}\right)+\grad f_k(x_\ell^{(1)})-\grad f_k(x_\ell^{(2)})\in \partial P_k(x_\ell^{(1)})$. Since $\grad f_k$ is Lipschitz continuous, we have $\norm{\grad f_k(x_\ell^{(1)})-\grad f_k(x_\ell^{(2)})}_2\leq L_k\norm{x_\ell^{(2)}-x_\ell^{(1)}}_2$. Therefore, we have $\norm{s_k}_2\leq2L_k\norm{x_\ell^{(2)}-x_\ell^{(1)}}_2\leq\frac{\eta_k}{\nu_k}$.

\begin{figure}[!h]
    \rule[0in]{6.5in}{1pt}\\
    \textbf{Algorithm ALCC}$~(x_0,\nu_0,\alpha_0,L_\gamma, B)$\\
    \rule[0.125in]{6.5in}{0.1mm}
    \vspace{-0.35in}
    {\small
    \begin{algorithmic}[1]
    \STATE $k\gets1$, $\theta_1\gets\mathbf{0}$
    \STATE $\eta_0\gets0.5~\norm{\grad\gamma\left(x_0\right)-\nu_0 A^{*}\left(\Pi_{C^*}\left(b-A(x_0)\right)\right)}_2$
    \WHILE{$k\geq 1$} \label{algeq:stop_ALCC}
    \STATE $\ell\gets 0$, $t_1\gets 1$,
    \STATE $x_0^{(1)}\gets x_{k-1}$, $x_1^{(2)} \gets x_{k-1}$
    \STATE $L_k\gets\frac{1}{\nu_k}L_\gamma+\sigma_{\max}^2(A)$,\ $\ell_k^{\max}\gets k^{1+c}\beta^k B\sqrt{\frac{2\nu_0 L_k}{\alpha_0}}$
    \STATE $\nu_k \gets \beta^k \nu_0$,\ $ \alpha_k \gets \dfrac{1}{k^{2(1+c)}\beta^k}\alpha_0$,\ $ \eta_k \gets \dfrac{1}{k^{2(1+c)}\beta^k}\eta_0$
    \STATE $\mathrm{STOP}\gets\mathbf{false}$
    \WHILE{ $\mathrm{STOP}=\mathbf{false}$} \label{algeq:stop}
    \STATE $\ell \gets \ell + 1$
    \STATE $g_\ell \gets \frac{1}{\nu_k}\grad\gamma\left(x_\ell^{(2)}\right)- A^{*}\left(\Pi_{C^*}\left(\theta_k+b-A\left(x_\ell^{(2)}\right)\right)\right) $ \label{algeq:grad_comp}
    \STATE $x_\ell^{(1)} \leftarrow \Pi_\Delta\left(x_\ell^{(2)}-g_\ell/L_k\right)$ \label{algeq:proj_grad}
    \IF {$\norm{x_\ell^{(1)} - x_\ell^{(2)}}_2 \leq \dfrac{1}{2L_k}\dfrac{\eta_k}{\nu_k}$ \textbf{or} $\ell>\ell_k^{\max}$}
        \STATE $\mathrm{STOP}\gets\mathbf{true}$
        \STATE $x_k\gets x_\ell^{(1)}$
    \ENDIF
    \STATE $t_{\ell+1}\gets \left(1+\sqrt{1+4~t^2_{\ell}}\right)/2$
    \STATE $x_{\ell+1}^{(2)} \gets x_\ell^{(1)} +
    \left(\frac{t_{\ell}-1}{t_{\ell+1}}\right)\left(x_\ell^{(1)}-x_{\ell-1}^{(1)}\right)$
    \ENDWHILE
    \STATE $\theta_{k+1} \leftarrow \frac{\nu_k}{\nu_{k+1}} \Pi_{C^*}(\theta_k+b-A(x_k))$
    \ENDWHILE
    \end{algorithmic}
    \rule[0.25in]{6.5in}{0.1mm}
    }
    \vspace{-0.5in}
    \caption{first-order Augmented Lagrangian algorithm for Conic Convex~(ALCC) problems}\label{tab:1}
    \vspace{-0.35cm}
\end{figure}
Semidefinite program of \eqref{rank problem} is a special case of the conic convex problem in~\eqref{eq:composite_problem}, where $\gamma(\mathbf{y_x,y})=c_r^T\mathbf{y_x}+c_p^T\mathbf{y}$ for some $c_r\in\reals^{S_{n,2d}}$ and $c_p\in\reals^{S_{n+m,2d}}$ since the objective of \eqref{rank problem} is linear in $(\mathbf{y, y_x})$; hence, $L_\gamma=0$, the conic constraint $ A(.)-b \in C$ in~\eqref{eq:composite_problem} is a linear matrix inequality~(LMI), with $\cC=\cC^*$ being the cone of positive semidefinite matrices $\mathbb{S}_+$, and the compact set $\Delta=\{(\mathbf{y},\mathbf{y_x}):\ \norm{\mathbf{y}}_\infty\leq 1,\ \norm{\mathbf{y_x}}_\infty\leq 1,\ (\mathbf{y_x})_0=1\}$. Hence, $\Pi_\cC(.)=\Pi_{\cC^*}(.)$ can be computed using one eigenvalue decomposition, and $\Pi_\Delta(.)$ is very efficient and can be computed in linear time. In our numerical experiments in Section~\ref{sec:numerical}, we used $\norm{x_k-x_{k-1}}_2/(1+\norm{x_{k-1}}_2)\leq \mathrm{tol}$ as the stopping condition for ALCC.
\newpage
\subsection{Numerical Examples}
\label{sec:numerical}
In this section, four numerical examples are presented that illustrate the performance of the proposed methodology, discussed in Sections \ref{sec:intersection_prob} and \ref{sec:union_prob}. We compared the augmented Lagrangian algorithm, ALCC, presented in Section~\ref{sec:ALCC} with GloptiPoly, which is a Matlab-based toolbox aimed at optimizing moments of measures \cite{r_GloptiPoly}, to compute approximate solutions to the chance constrained problems in \eqref{intro_union} and \eqref{intro_max}. 
In all the tables, for problems of the form \eqref{intro_max}, i.e., $N=1$, $\mathbf{P}_d$, $\mathbf{P'}_d$, $\mathbf{\bar{P}}_d$, and $\mathbf{\tilde{P}}_d$ denote the optimal probability estimates defined similarly as in Section~\ref{sec:prob_approx} for $x^d$ obtained by solving the regularized problem in \eqref{rank problem}; 
for problems of the form \eqref{intro_union}, i.e., $N>1$, these estimates can be defined naturally using $(\mathbf{y}^d,\mathbf{y}^d_\mathbf{x})$ with $\mathbf{y}^d:=\sum_{k=1}^N\mathbf{y}^d_k$; and $d\in\integers_+$ denotes the relaxation order. In order to compute $\mathbf{P^*}$ and $\mathbf{\bar{P}}_d$, we used Monte Carlo simulation discussed in Section~\ref{sec:simulation}. In all the tables, $\mathbf{iter}$ denotes the total number of algorithm iterations, and $\mathbf{cpu}$ denotes the computing time in \emph{seconds} required for computing $\mathbf{P}_d$; $\mathbf{n_{\mathrm{var}}}$ denotes the number of variables, i.e., total number of moments used. For ALCC $\mathbf{iter}$ is the total number of APG iterations, and for GloptiPoly it denotes the total number of SeDuMi~\cite{sedumi} iterations.
\subsubsection{Monte Carlo Simulation}
\label{sec:simulation}
To test the accuracy of the results obtained using ALCC and GloptiPoly, we used Monte Carlo integration to estimate an optimal solution and the corresponding optimal probability. Let $\cK\subset\reals^n\times\reals^m$ be the given semialgebraic set such that $\Pi_1:=\{x\in\reals^n:\ \exists q\in\reals^m \hbox{ s.t. } (x,q)\in\cK\}\subset\chi:=[-1,1]^n$, and $\Pi_2:=\{q\in\reals^m:\ \exists q\in\reals^m \hbox{ s.t. } (x,q)\in\cK\}\subset\cQ:=[-1,1]^m$. Define $\cF:\chi\rightarrow\Sigma_q$,
\begin{equation}
\label{eq:F}
\cF(x):=\{q\in\cQ:\ (x,q)\in\cK\}.
\end{equation}
First, we uniformly grid $\chi$ into $\bar{N}$ grid-points ($\bar{N}$ depending on the desired precision). Let $\{x^{(i)}\}_{i=1}^{\bar{N}}\subset\chi$ denote the points in the uniform grid. Next, for each grid point $x^{(i)}$, we sample from the distribution induced by the given finite Borel measure $\mu_q$ supported on $\cQ$. Let $\{q^{(i,k)}\}_{k=1}^{N_i}$ be $N_i$ i.i.d. sample of random parameter $q$. Then we approximate $\mu_q(\cF(x^{(i)}))$ by $$P^{(i)}_{N_i}:=\frac{1}{N_i}\sum_{k=1}^{N_i}\mathbf{1}_{\cK}\left(x^{(i)},q^{(i,k)}\right), \quad \hbox{where} \quad \mathbf{1}_{\cK}\left(x,q\right)=\left\{
                                       \begin{array}{ll}
                                         1, & \hbox{if $(x,q)\in\cK$;} \\
                                         0, & \hbox{otherwise.}
                                       \end{array}
                                     \right.$$
Because of law of large numbers, $\lim_{N_i\nearrow\infty}P^{(i)}_{N_i}=\mu_q(\cF(x^{(i)}))$. For each $x^{(i)}$, we chose sample size $N_i$ such that $P^{(i)}_{N_i}$ becomes stagnant to further increase in $N_i$. Finally, we approximate $x^*$ by $x^{(i^*)}$, where $i^*\in\argmax\{P^{(i)}_{N_i}: 1\leq i\leq \bar{N}\}$. It is clear that what we used is a \emph{naive} method, and it can be made much more efficient by using an adaptive gridding scheme on $\chi$. On the other hand, as the dimensions $n$ and $m$ are very small for the problems discussed in the numerical section, this naive method served its purpose.

\subsubsection{Example 1: A Simple Semialgebraic Set} Consider the chance optimization problem
\begin{equation} \label{max_exa2_def1}
\sup_{x\in\reals^5} \mu_q\left(\{q \in\reals^5 :\ \mathcal{P}(x,q)\geq0\ \}\right),
\vspace{-0.1cm}
\end{equation}
\vspace{-0.25cm}
where
\begin{equation*} 
\begin{array}{ll}
\mathcal{P}(x,q)=&0.185+ 0.5x_1 - 0.5x_2 + x_3 - x_4  + 0.5q_1 - 0.5q_2 + q_3 - q_4-x_1^2 - 2x_1q_1- x_2^2\\
 &- 2x_2q_2 - x_3^2 - 2x_3q_3 - x_4^2- 2x_4q_4 - x_5^2  + 2x_5q_5- q_1^2 - q_2^2 - q_3^2 - q_4^2 - q_5^2,  \end{array}
\end{equation*}
and the uncertain parameters $q_1,q_2,q_3,q_4,q_5$ have a uniform distribution: $q_1 \sim U[-1, 0]$, $q_2 \sim U[0, 1]$, $q_3 \sim U[-0.5, 1]$, $q_4 \sim U [-1, 0.5]$, $q_5 \sim U [0, 1]$ -- $U[a,b]$ denotes the uniform distribution between $a$ and $b$. The $k$-th moment of uniform distribution U[a,b] is $(\mathbf{y_q})_k =\frac{b^{k+1} - a^{k+1}}{(b-a)(k+1)} $. The optimum solution and corresponding optimal probability are obtained by Monte Carlo method: $x_1^* = 0.75$, $x_2^* = -0.75$, $x_3^* = 0.25$, $x_4^* = -0.25$, $x_5^* = 0.5$, and $P^* = 0.75 $. To obtain an approximate solution, we solve the SDP in \eqref{max_p4} using GloptiPoly and ALCC. For ALCC, we set $\nu_0$ to $1$, $5\times10^{-2}$ and $5\times10^{-3}$ when $d$ is equal to $1$, $2$, and $3$, respectively, and $\mathrm{tol}=1\times10^{-2}$. The results for relaxation order $d=1,2,3$ are shown in Table~\ref{tab:ex1}. As in Figure~\ref{fig:2}, when compared to $\mathbf{P}_d$, $\mathbf{\tilde{P}}_d$ approximates $\mathbf{P^*}$ better, i.e., when $\max\{\int\cP(x^d,q)~d\tilde{\mu}:\ \tilde{\mu}\preceq\mu_q,\ \tilde{\mu}\in \cM(\cF(x^d))\}$ is solved instead of $\max\{\int d\mu':\ \mu'\preceq\mu_q,\ \mu'\in \cM(\cF(x^d))\}$. 
We reported results up to order $d=3$, because for larger $d$, GloptiPoly did not terminate in 24 hours.
\begin{table}
\renewcommand{\arraystretch}{1.2}
\begin{footnotesize}
    \begin{center}
    \begin{tabular}{| l | l | l | l |}
    \hline
    \multicolumn{4}{|c|}{\textbf{ALCC}} \\ \thickhline
    $\mathbf{d}$ &  1 & 2 & 3 \\ \hline
    $\mathbf{n_{\mathrm{var}}}$ & 87 &  1127 &  8463  \\ \hline
    $\mathbf{iter}$& 169    & 624 & 1207 \\ \hline
    $\mathbf{cpu}$& 0.9  & 28.1 & 785.9\\ \hline
    $\mathbf{x_1}$ & 0.742  & 0.745  & 0.757 \\ \hline
    $\mathbf{x_2}$ & -0.777 & -0.701 &-0.721 \\ \hline
    $\mathbf{x_3}$ & 0.213  & 0.226  & 0.216  \\ \hline
    $\mathbf{x_4}$ & -0.239 & -0.250 & 0.236 \\ \hline
    $\mathbf{x_5}$ & 0.500  & 0.551  & 0.557 \\ \hline
    $\mathbf{P}_d$   & 0.991  & 0.971  & 0.961 \\ \thickhline
    $\mathbf{P}'_d$   & 1  & 1  & 1 \\ \hline
    $\mathbf{\tilde{P}}_d$   & 0.996  & 0.7739  & 0.6919 \\ \hline
    $\mathbf{\bar{P}}_d$   & 0.7504  & 0.7459  & 0.7459 \\ \thickhline

    \end{tabular}
    \begin{tabular}{| l | l | l | l |}
    \hline
\multicolumn{4}{|c|}{\textbf{GloptiPoly}} \\ \thickhline
    $\mathbf{d}$ & 1 & 2 & 3 \\ \hline
    $\mathbf{n_{\mathrm{var}}}$ & 87 &  1127 &  8463 \\ \hline
    $\mathbf{iter}$  & 18 & 25 & 41 \\ \hline
    $\mathbf{cpu}$  & 0.5 & 12.3 & 15324.3 \\ \hline
    $\mathbf{x_1}$ & 0.467 & 0.710 & 0.742 \\ \hline
    $\mathbf{x_2}$ & -0.467 & -0.710 &-0.742  \\ \hline
    $\mathbf{x_3}$ & 0.163 & 0.245 & 0.249  \\ \hline
    $\mathbf{x_4}$ & -0.163 & -0.245 & -0.249  \\ \hline
    $\mathbf{x_5}$ & 0.319 & 0.475 & 0.495\\ \hline
    $\mathbf{P}_d$ & 1 & 1 & 1 \\ \thickhline
    $\mathbf{P}'_d$ & 1 & 1 & 1 \\ \hline
    $\mathbf{\tilde{P}}_d$   & 0.9652  & 0.7768  & 0.7031 \\ \hline
    $\mathbf{\bar{P}}_d$   & 0.5067  & 0.7484  & 0.7535 \\ \thickhline

    \end{tabular}
    \end{center}
\end{footnotesize}
\vspace{3 mm}
\caption{ALCC and GloptiPoly results for Example 1}
\label{tab:ex1}
\end{table}

\subsubsection{Example 2: Union of Simple Sets} Given the following polynomials
\begin{align*} 
\mathcal{P}^{(1)}(x,q) = & - 0.263 + 0.4x_1 - 0.4x_2 + 0.8x_3 - 0.8x_4 + 1.2x_5 + 0.1q_1 + 0.08q_2 + 0.04q_3\\
&\hbox{}+ 0.4q_4 + 0.6q_5-x_1^2 - x_2^2 - x_3^2 - x_4^2 - x_5^2 - 0.5q_1^2 - 0.4q_2^2 - 0.1q_3^2 - q_4^2 - q_5^2,\\
\mathcal{P}^{(2)}(x,q) = &- 2.06+ 0.4x_1 - 0.8x_2 + 3.2x_3 - 1.6x_4 + 3.6x_5 - 0.4q_1 - 0.4q_2 - 0.2q_3  \\
&\hbox{}- 0.2q_4 - 0.8q_5 -x_1^2 - 2x_2^2 - 4x_3^2 - 2x_4^2 - 3x_5^2 - q_1^2 - q_2^2 - q_3^2 - q_4^2 - q_5^2,
\end{align*}
consider the chance optimization problem
\begin{equation} \label{max_exa3_def1}
\sup_{\rm x\in\reals^5} \mu_q \left(\bigcup_{j=1,2} \left\{ q\in\reals^5:\  \mathcal{P}^{(j)}(x,q)\geq0 \right\}\right),
\end{equation}
where $q_i \sim U[-0.5, 0.5]$ for all $i=1,\ldots,5$, i.e., the uncertain parameters $q_i$ are uniformly distributed on $[-0.5,0.5]$. The optimum solution and corresponding optimal probability are obtained by Monte Carlo method: $x_1^* = 0.2,~x_2^* = -0.2,~x_3^* = 0.4,~x_4^* = -0.4,~x_5^* = 0.6$, and $\mathbf{P^*} = 0.80 $. To obtain an approximate solution, we solve the SDP in \eqref{min_p3} using ALCC, where we set $\nu_0$ to $1$, $1\times10^{-1}$ and $1\times10^{-3}$ when $d$ is equal to $1$, $2$, and $3$, respectively, and $\mathrm{tol}=1\times10^{-2}$. The results for relaxation order $d=1,2,3$ are shown in Table~\ref{tab:alcc_ex2}. Let $\cF^{(k)}(x)=:\{q\in\cQ:\ \cP^{(k)}(x,q)\geq 0\}$ for $k=1,2$. The probability estimates $\mathbf{\tilde{P}}_d$ reported in Table~\ref{tab:alcc_ex2} are computed by solving the SDP relaxation for
\begin{equation*}
\max\left\{\int\cP^{(1)}(x^d,q)~d\tilde{\mu}_1+\int\cP^{(2)}(x^d,q)~d\tilde{\mu}_2:\ \tilde{\mu}_1+\tilde{\mu}_2\preceq\mu_q,~\tilde{\mu}_1\in \cM(\cF^{(1)}(x^d)),~\tilde{\mu}_2\in \cM(\cF^{(2)}(x^d))\right\}.
\end{equation*}
For this example, GloptiPoly fails to extract the optimum solution.

\begin{table}
\renewcommand{\arraystretch}{1.2}
\begin{footnotesize}
    \begin{center}
   \begin{tabular}{| l | l | l | l |}
    \hline
    \multicolumn{4}{|c|}{\textbf{ALCC}} \\ \thickhline
    $\mathbf{d}$ &  1 & 2 & 3 \\ \hline
    $\mathbf{n_{\mathrm{var}}}$ & 153 &   2128 &  16478 \\ \hline
    $\mathbf{iter}$&  979  & 1467 & 1875 \\ \hline
    $\mathbf{cpu}$& 6.5  &  102.2 & 434.7\\ \hline
    $\mathbf{x_1}$ &  0.209 & 0.328  & 0.201 \\ \hline
    $\mathbf{x_2}$ & -0.202 & -0.174 & -0.201 \\ \hline
    $\mathbf{x_3}$ & 0.397  & 0.466  & 0.430  \\ \hline
    $\mathbf{x_4}$ & -0.400 & -0.405 & -0.401 \\ \hline
    $\mathbf{x_5}$ & 0.667  & 0.638  & 0.591 \\ \hline
    $\mathbf{P}_d$   & 1  & 0.997  & 0.981 \\ \thickhline
    $\mathbf{P}'_d$   & 1  & 1 & 1 \\ \hline
    $\mathbf{\tilde{P}}_d$   & 0.9973  & 0.8610  & 0.8926 \\ \hline
    $\mathbf{\bar{P}}_d$   & 0.8937  & 0.8745  & 0.8984 \\ \thickhline
    \end{tabular}
    \end{center}
\end{footnotesize}
\vspace{3 mm}
\caption{ALCC results for Example 2}
\vspace{-5 mm}
\label{tab:alcc_ex2}
\end{table}
%
%
\subsubsection{Example 3: Portfolio Selection Problem} We aim at selecting a portfolio of financial assets to maximize the probability of achieving a return higher than a specified amount $r^*$. Suppose that for each asset $i=1,...,N$, its uncertain rate of return is a random variable $\xi_i(q)$; and let $(\cQ,\Sigma_q,\mu_q)$ denote the underlying probability space. In this context $x_i$ denotes the percentage of money invested in asset $i$. 
More precisely, we solve the following problem:
\begin{equation} \label{eq:ex3}
\sup_{x\in\reals^N} \mu_q\left(\left\{q \in\reals^N :\ \sum^N_{i=1}\xi_i(q)x_i\geq r^*\right\}\right)\quad \hbox{s.t.}\quad \sum_{i=1}^Nx_i\leq 1,\quad x_i\geq 0\,\quad \forall\ i\in\{1,\ldots,N\}.
\end{equation}
In our example problem, $r^*=1.5$, $N=4$, $\xi_1(q)= 1+q_1$, $\xi_2(q)= 1+q_2$, $\xi_3(q)= 0.9+q_3$, $\xi_4(q)= 0.9+q_4$, where $\{q_i\}_{i=1}^4$ are independent, and $q_1 \sim \mathrm{Beta}(3-\sqrt 2,3+\sqrt 2)$, $q_2 \sim \mathrm{Beta}(4,4)$, $q_3 \sim \mathrm{Beta}(3+\sqrt 2,3-\sqrt 2)$, $q_4 \sim U [0.5, 1]$. The $k$-th moment of Beta distribution $\mathrm{Beta}(\alpha,\beta)$ over [0,1] is $y_k = \frac{\alpha+k-1}{(\alpha+\beta+k-1)} y_{k-1}$ and $y_0=1$. We will solve an equivalent problem in the form of \eqref{intro_max} with $\ell=7$, where $\cP_j(x,q)=x_j$ for $j=1,\ldots,4$, $\cP_5(x,q)=1 - \sum_{i=1}^4x_i$, $\cP_6(x,q)=8-\sum_{i=1}^4x_i^2-\sum_{i=1}^4q_i^2$, and $\cP_7(x,q)=\sum_{i=1}^4 \xi_i(q) x_i-r^*$.
Since any $(x,q)\in\cK$ satisfies $x\in\chi$ and $q\in\cQ$, we added polynomial $\mathcal{P}_6(x,q)$ to assure that the resulting representation of the semialgebraic set $\cK$ satisfies Putinar's property. The optimum solution and the corresponding optimal probability are computed approximately by Monte Carlo method: $x_1^* = 0$, $x_2^* = 0$, $x_3^* = 0.3$, $x_4^* = 0.7$, and $P^* = 0.89 $. To obtain an approximate solution, we solve the SDP relaxation in \eqref{max_p4} using GloptiPoly and ALCC. For ALCC, we set $\nu_0$ to $1\times10^{-2}$, $1\times10^{-2}$ and $1\times10^{-3}$ when $d$ is equal to $1$, $2$, and $3$, respectively, and $\mathrm{tol}=1\times10^{-3}$. The results for relaxation order $d=1,2,3$ are shown in Table~\ref{tab:ex3}. We reported results up to order $d=3$, because for larger $d$, GloptiPoly did not terminate in 24 hours.
\begin{table}[!h]
\renewcommand{\arraystretch}{1.2}
\begin{footnotesize}
    \begin{center}
    \begin{tabular}{| l | l | l | l | l |}
    \hline
\multicolumn{4}{|c|}{\textbf{ALCC}} \\ \thickhline
    $\mathbf{d}$ & 1 & 2 & 3  \\ \hline
    $\mathbf{n_{\mathrm{var}}}$ & 60 &  565 &  3213  \\ \hline
    $\mathbf{iter}$ & 573 &  388 &  2227  \\ \hline
   $\mathbf{cpu}$ & 3.625 &  16.426 &   756.798 \\ \hline
    $\mathbf{x_1}$ & 0.004 & 0.009 & 0.002 \\ \hline
    $\mathbf{x_2}$ &  0.012 & 0.009 & 0.006  \\ \hline
    $\mathbf{x_3}$ &  0.438 & 0.449 & 0.299 \\ \hline
    $\mathbf{x_4}$ & 0.5007 & 0.522 & 0.677  \\ \hline
    $\mathbf{P}_d$ & 0.996 & 0.994 & 0.980 \\ \thickhline
    $\mathbf{P}'_d$ & 1 & 1 & 0.9716 \\ \hline
    $\mathbf{\tilde{P}}_d$ & 0.7928 & 0.8177 & 0.8220 \\ \hline
    $\mathbf{\bar{P}}_d$ & 0.7405 & 0.8655 & 0.8422 \\ \thickhline
    \end{tabular}
    \begin{tabular}{| l | l | l | l |}
    \hline
\multicolumn{4}{|c|}{\textbf{GloptiPoly}} \\ \thickhline
    $\mathbf{d}$ & 1 & 2 & 3  \\ \hline
    $\mathbf{n_{\mathrm{var}}}$ & 60 &  565 &  3213 \\ \hline
    $\mathbf{iter}$ & 15 & 20  & 48  \\ \hline
    $\mathbf{cpu}$ &  0.509 & 2.617 & 1025.045 \\ \hline
    $\mathbf{x_1}$ & 0.133 & 0.0462 & 0.003 \\ \hline
    $\mathbf{x_2}$ & 0.192 & 0.154 & 0.075  \\ \hline
    $\mathbf{x_3}$ & 0.295 & 0.297 & 0.210 \\ \hline
    $\mathbf{x_4}$ & 0.325 & 0.493 & 0.710 \\ \hline
    $\mathbf{P}_d$ & 1 & 1 & 0.999 \\ \thickhline
    $\mathbf{P}'_d$ & 0.9071 & 0.9997 & 0.9896 \\ \hline
    $\mathbf{\tilde{P}}_d$ & 0.3808 & 0.7753 & 0.8395 \\ \hline
    $\mathbf{\bar{P}}_d$ & 0.3865 & 0.8267 & 0.8675 \\ \thickhline
    \end{tabular}
    \end{center}
\end{footnotesize}
\vspace{3 mm}
\caption{ALCC and GloptiPoly results for Example 3}
\vspace{-3 mm}
\label{tab:ex3}
\end{table}
\subsubsection{Example 4: Nonlinear Control Problem} In this example, we consider the controller design problem for the following uncertain nonlinear dynamical system. For a given control parameter vector $K\in\reals^3$, let the system $x(k)^T=[x_1(k),~x_2(k),~x_3(k)]\in\reals^3$ satisfy
\begin{equation}
\label{eq:control_system}
\begin{array}{r l}
u(k) =&K_1x_1(k)+K_2x_2(k)+K_3x_3(k),\\
x_1(k+1)=&\Delta~x_2(k),\\
x_2(k+1)=&x_1(k)~x_3(k),\\
x_3(k+1)=&1.2~x_1(k)-0.5~x_2(k)+x_3(k)+u(k),
\end{array}
\end{equation}
for $k=0,1$, where $x_{1}(0) \sim U[-1, 1]$,~ $x_{2}(0) \sim U[-1, 1]$,~ $x_{3}(0) \sim U[-1, 1]$,~ $\Delta \sim U [-0.4, 0.4]$, i.e., initial state vector $x(0)$, and model parameter $\Delta$ are uncertain and uniformly distributed. The objective is to lead the system using state feedback control $u(k)$ to the cube centered at the origin with the edge length of $0.2$ in at most $2$ steps by properly choosing the control decision variables $\{K_i\}_{i=1}^3$ such that $-1 \leq K_i \leq 1$. The equivalent chance problem is stated in \eqref{eq:controller_problem}, where $\mathbf{e}^T=[1,1,1]$.
\begin{align} \label{eq:controller_problem}
\sup_{K\in\reals^3} &\ \mu_q\left(\left\{\Big(x(0),\Delta\Big):\ -0.1\mathbf{e}\leq x(2)\leq 0.1\mathbf{e} \right\}\right),\\
\hbox{s.t.} &\ \{x(k),u(k)\}_{k=0}^2 \hbox{ satisfy } \eqref{eq:control_system}, \nonumber\\
&\ -\mathbf{e} \leq K\leq \mathbf{e}. \nonumber
\end{align}
The following optimal solution and the corresponding optimal probability are computed by Monte Carlo method: $K_1^* = -1$, $K_2^* = 0.5$, $K_3^* = -0.9$, and $\mathbf{P^*} = 0.84$.  To obtain an equivalent SDP formulation for the chance constrained problem in \eqref{eq:controller_problem}, $x(2)$ is explicitly written in terms of control vector $K\in\reals^3$ and uncertain parameters, $x(0)$ and $\Delta$, using the dynamic system given in \eqref{eq:control_system}:
\begin{align*} 
x_1(2) =~&\Delta~x_{1}(0)x_{3}(0),\\
x_2(2) =~&(1.2+K_1)\Delta~x_{1}(0)x_{2}(0) +(K_2-0.5)\Delta~x_{2}(0)^2 + (1+K_3)\Delta~x_{2}(0)x_{3}(0),\\
x_3(2) =~&(1+2K_3+K_3^2)~x_{3}(0)+(K_2-0.5K_3-0.5+1.2\Delta +K_1\Delta +K_2K_3)~x_{2}(0)\\
&\hbox{}+(1.2+K_1+1.2K_3+K_1K_3)~x_{1}(0)+(K_2-0.5)~x_{1}(0)x_{3}(0).
\end{align*}
%
Based on the obtained polynomials, the minimum relaxation order for this problem is 2. To obtain an approximate solution, we solve the SDP in \eqref{max_p4} using GloptiPoly and ALCC. For ALCC, we set $\nu_0$ to $5\times10^{-3}$, $5\times10^{-3}$ and $1\times10^{-3}$ when $d$ is equal to $2$, $3$ and $4$, respectively, and $\mathrm{tol}=1\times10^{-3}$. The results for relaxation order $d=2,3,4$ are shown in Table~\ref{tab:ex4}. 
\begin{table}
\renewcommand{\arraystretch}{1.2}
\begin{footnotesize}
    \begin{center}
    \begin{tabular}{| l | l | l | l |}
    \hline
\multicolumn{4}{|c|}{\textbf{ALCC}} \\ \thickhline
    $\mathbf{d}$ & 2 & 3 & 4  \\ \hline
    $\mathbf{n_{\mathrm{var}}}$ & 365 &   1800 &   6600  \\ \hline
    $\mathbf{iter}$ & 416 & 4300 & 5325 \\ \hline
    $\mathbf{cpu}$ &  14.934 &   897.708 &  5318.387  \\ \hline
    $\mathbf{K_1}$ & 0 & -0.244 & -0.683 \\ \hline
    $\mathbf{K_2}$ & 0 & 0.468 & 0.476  \\ \hline
    $\mathbf{K_3}$ & 0 & -0.868 & -0.868  \\ \hline
    $\mathbf{P}_d$ & 0.238 & 0.996 & 0.983  \\ \thickhline
    $\mathbf{P}'_d$ & 0.65 & 0.9 & 0.982  \\ \hline
    $\mathbf{\bar{P}}_d$ & 0.061 & 0.445 & 0.685  \\ \thickhline
    \end{tabular}
    \begin{tabular}{| l | l | l | l |}
    \hline
    \multicolumn{4}{|c|}{\textbf{GloptiPoly}} \\ \thickhline
    $\mathbf{d}$ & 2 & 3 & 4  \\ \hline
    $\mathbf{n_{\mathrm{var}}}$ & 365 &   1800 &   6600 \\ \hline
    $\mathbf{iter}$ & 19 & 26 & 36  \\ \hline
    $\mathbf{cpu}$ &  1.3 &   99.2 &  10389.8\\ \hline
    $\mathbf{K_1}$ & 0 & -0.492 & -0.796 \\ \hline
    $\mathbf{K_2}$ & 0 & 0.439 & 0.487 \\ \hline
    $\mathbf{K_3}$ & 0 & -0.823 & -0.891  \\ \hline
    $\mathbf{P}_d$ & 1 & 1 & 1  \\ \thickhline
    $\mathbf{P}'_d$ & 0.65 & 0.959 & 0.999  \\ \hline
    $\mathbf{\bar{P}}_d$ & 0.061 & 0.508 & 0.766  \\ \thickhline
    \end{tabular}
    \end{center}
  \end{footnotesize}
  \vspace{3 mm}
  \caption{ALCC and GloptiPoly results for Example 4}
  \vspace{-5 mm}
  \label{tab:ex4}
\end{table}
\subsubsection{Example 5: Run time} In this example, for fixed degree of the relaxation order d, we examined how the run times of ALCC algorithm scale as the problem size increases. For this purpose, we consider the following problem: Given $n\geq 1$,
we set $\cP:\reals^n\times\reals^n\rightarrow\reals$, $\mathcal{P}\left(x,q\right)=0.81-\sum_{i=1}^n (x_i-q_i)^2$; and solve
\begin{equation} \label{max_exa1_def10}
\sup_{x\in R^n} \mu_q\left(\{q \in R^n :\ \mathcal{P}(x,q)\geq0\ \}\right).
\end{equation}
The numerical results for increasing $n$ and fixed relaxation order $d=1$ are displayed in Table~\ref{tab:ex5}. For each $n$, ALCC recovered the optimal decision value: $x^*=0$. 
\begin{table}[h!]
\begin{footnotesize}
    \begin{center}
    \begin{tabular}{| l | l | l | l | l | l | l | l | l | l |}
    \hline
    \multicolumn{10}{|c|}{\textbf{ALCC}} \\ \hline
    $\mathbf{n}$ & 5 &  10 &  20 & 30 & 40 & 50 & 60 & 70 & 80  \\ \hline
    $\mathbf{d}$ &  1 & 1 & 1 &  1 & 1 & 1 &  1 & 1 & 1 \\ \hline
    $\mathbf{n_{\mathrm{var}}}$ & 10 &  20 &  40 & 60 & 80 & 100 & 120 & 140 & 160  \\ \hline
    $\mathbf{iter}$& 82    & 140 & 97 & 182 & 201 & 175 & 191 & 186 & 208 \\ \hline
    $\mathbf{cpu}$& 0.3969  & 1.5349 & 3.5542 & 14.2899 & 27.7978 & 37.2624 & 60.4454 & 83.3669 & 122.7844\\ \hline
    \end{tabular}
    \end{center}
\end{footnotesize}
  \caption{ALCC for increasing problem in Example 5}
  \label{tab:ex5}
\end{table}
\section{ Conclusion}
In this paper, ``chance optimization" problems are introduced, where one aims at maximizing the probability of a set defined by polynomial inequalities. These problems are, in general, nonconvex and computationally hard. A sequence of semidefinite relaxations is provided whose sequence of optimal values is shown to converge to the optimal value of the original problem. To solve the semidefinite programs of increasing size obtained by relaxing the original chance optimization problem, a first-order augmented Lagrangian algorithm is implemented which enables us to solve much larger size semidefinite programs that interior point methods can deal with. Numerical examples are provided that show that one can obtains reasonable approximations to the optimal solution and the corresponding optimal probability even for lower order relaxations.

\appendix
\section{Sample GloptiPoly Code for Chance Optimization}
In this section, we provide the Gloptipoly code for solving the simple problem given in \eqref{max_exa1_def1} and \eqref{max_exa1_def2} of Section~\ref{sec:simple_example}.
\begin{figure}[!h]
{\footnotesize
\texttt{>> d=2; \%relaxation order}\\
\texttt{>> \%  mu\textunderscore s: slack measure,  mu\textunderscore s = mux muq - mu, y\textunderscore s: moments of mu\textunderscore s}\\
\texttt{>> mpol x\textunderscore s q\textunderscore s;  mu\textunderscore s = meas([x\textunderscore s;q\textunderscore s]);  y\textunderscore s=mom(mmon([x\textunderscore s;q\textunderscore s],2*d));  }\\
\texttt{>> \% mu: measure supported on p>=0, y: moments of mu}\\
\texttt{>> mpol x q; mu = meas([x;q]); y=mom(mmon([x;q],2*d)); }\\
\texttt{>> p=0.5*q*(q\textasciicircum 2+(x-0.5)\textasciicircum 2)-(q\textasciicircum4+q\textasciicircum2*(x-0.5)\textasciicircum2+(x-0.5)\textasciicircum4);}\\
\texttt{>> \%  mux: measure, yx: moments of mux}\\
\texttt{>> mpol xm; mux= meas([xm]); yx=mom(mmon([xm],2*d)); }\\
\texttt{>> \%  yq: moments of uniform distribution muq on [-1,1] }\\
\texttt{>> yq=[1;0;1/3;0;0.2];}\\
\texttt{>> \% yxq : moments of upper bound measure mux muq}\\
\texttt{>> yxq = [yx(1)*yq(1);yx(2)*yq(1);yx(1)*yq(2);yx(3)*yq(1);yx(2)*yq(2);}\\
\texttt{>> yx(1)*yq(3);yx(4)*yq(1);yx(3)*yq(2);yx(2)*yq(3);yx(1)*yq(4);}\\
\texttt{>>   yx(5)*yq(1);yx(4)*yq(2);yx(3)*yq(3);yx(2)*yq(4);yx(1)*yq(5)];}\\
\texttt{>>   Pd=msdp(max(mass(mu)),mass(mux)==1,p>=0,y\textunderscore s==yxq - y,-1<=yx,yx<=1,d);msol(Pd);}\\
\texttt{>>  y=double(mvec(mu)); yx=double(mvec(mux)); \% results}\\
\texttt{>>  Decision= yx(2)}\\
\texttt{>>  Probability = y(1)}
}
\caption{GloptiPoly Code in Matlab for Example 1}
\label{fig:3}       
\end{figure}

\begin{thebibliography}{}
%
%
\bibitem{r_adjo}
N. I. Akhiezer, I. M. Glazman, ``Theory of Linear Operators in Hilbert Space", Courier Dover Publications, 1993.

\bibitem{r_Allc}
N. S. Aybat, G. Iyengar, ``An Augmented Lagrangian Method for Conic Convex Programming," preprint (2013), arXiv:1302.6322v1.

\bibitem{aybat12}
N. S. Aybat and G. Iyengar, ``A Unified Approach for Minimizing Composite Norms," Mathematical
Programming Journal, Series A, Vol. 44, pp. 181-226, 2014.
\bibitem{aybat12_fal}
N. S. Aybat and G. Iyengar, ``A First-Order Augmented Lagrangian Method for Compressed Sensing," SIAM Journal on Optimization, Vol. 22, pp. 429-459, 2012.
\bibitem{aybat15_fal}
N. S. Aybat, G. Iyengar and Z. Wang, ``An Asynchronous Distributed Proximal Method for Composite Convex Optimization," to appear in the Proceedings of the $32^{th}$ International Conference on Machine Learning, Lille, Franse, 2015, preprint available at arXiv:1409.8547v1.
\bibitem{Beck09_1J}
A. Beck, M. Teboulle, ``A fast iterative shrinkage-thresholding algorithm for linear inverse problems," SIAM Journal on Imaging Sciences, V 2, pp. 183-202, 2009.

\bibitem{r_robust 1}
A. Ben-Tal, A. Nemirovski, ``Robust solutions of uncertain linear programs", \textit{Operations Research Letters}, Vol. 25, pp. 1-13, 1999.

\bibitem{r_robust 2}
A. Ben-Tal, E. L. Ghaoui, A. Nemirovski, ``Robust Semidefinite Programming" , Handbook on Semidefinite Programming, KluwerAcademis Publishers, 2000.

\bibitem{r_robust 3}
A. Ben-Tal, A. Nemirovski, C. Roos, ``Robust solutions of uncertain quadratic and conic-quadratic problems", \textit{SIAM Journal on Optimization}, Vol. 13, pp. 535-560, 2002.

\bibitem{r_robust 4}
A. Ben-Tal, A. Goryashko, E. Guslitzer, A. Nemirovski, ``Adjustable robust solutions of uncertain linear programs", \textit{Mathematical Programming}, Vol. 99, pp. 351-376, 2004.

\bibitem{r_robust 5}
A. Ben-Tal and A. Nemirovski, ``Robust solutions of Linear Programming Problems Contaminated with Uncertain Data", \textit{Mathematical Programming}, Vol. 88, pp. 411-424, 2000.

\bibitem{r_upper 1}
D. Bertsimas and M. Sim, ``The price of robustness", \textit{European Journal of Operations Research}, Vol. 52, pp. 35-53, 2004.

\bibitem{r_App_Predictive Control2}
L. Blackmore, M. Ono, A. Bektassov, B. Williams, ``A Probabilistic Particle-Control Approximation of Chance-Constrained Stochastic Predictive Control",, IEEE Transactions on Robotics, Vol. 26 , Iss. 3, 2010.

\bibitem{r_App_Robotic 1}
L. Blackmore, M. Ono, B. Williams, ``Chance-Constrained Optimal Path Planning With Obstacles", IEEE Transactions on Robotics, Vol. 27, No. 6, pp. 1080-1094, 2011.

\bibitem{r_App_Robotic 3}
L. Blackmore, H. Li, B. Williams, ``A probabilistic approach to optimal robust path planning with obstacles", American Control Conference, Minneapolis, MN,June 2006.

\bibitem{r_scenario 3}
G. Calafiore and M. C. Campi, ``Uncertain convex programs: Randomized solutions and confidence levels", Mathematical Programing, Vol. 102, Springer, pp. 25-46, 2004.

\bibitem{r_scenario 4}
G. Calafiore and M. C. Campi, ``The scenario approach to robust control design", \textit{IEEE Transactions on Automatic Control}, Vol. 51, No. 5, pp. 742-753, 2006.

\bibitem{r_kin 1}
F. Dabbene, C. Feng, and C. M. Lagoa, ``Robust and Chance-Constrained Optimization under Polynomial Uncertainty," \textit{Proceedings of the 2009 American Control Conference}, St. Louis, Missouri, 2009.

\bibitem{r_App_Robotic 2}
N. E. Du Toit, J. W. Burdick, ``Probabilistic Collision Checking With Chance Constraints", IEEE Transactions on Robotics, Vol. 27 ,  Iss. 4, 2011.

\bibitem{r_nuc norm 1}
M. Fazel, H. Hindi, and S. Boyd, ``Log-det Heuristic for Matrix Rank Minimization with Applications to Hankel and Euclidean Distance Matrices", \textit{Proceedings of American Control Conference}, Denver, Colorado, June 2003.

\bibitem{r_kin 2}
C. Feng, C. M. Lagoa, ``Distributional robustness Analysis for polynomial uncertainty", \textit{Proceedings of IEEE Conference on Decision and Control}, Shanghai, P.R. China, December 16-18, 2009.

\bibitem{r_kin 3}
C. Feng, F. Dabbene, and C. M. Lagoa, ``A Kinship Function Approach to Robust and Probabilistic Optimization under Polynomial Uncertainty", \textit{IEEE Transactions on Automatic Control}, Vol. 56, No. 7, 2011.

\bibitem{r_GloptiPoly}
D. Henrion, J. B. Lasserre, J. Loefberg. ``GloptiPoly 3: moments, optimization and semidefinite programming" \emph{ Optimization Methods and Software}, Vol. 24, Nos. 4-5, pp. 761-779, 2009.

\bibitem{r_vol}
D. Henrion, J. B. Lasserre, C. Savorgnan, ``Approximate volume and integration for basic semialgebraic sets", \textit{SIAM Review}, Vol. 51, No. 4, pp. 722-743, 2009.

\bibitem{r_App_Control1}
A. M. Jasour, C. Lagoa, "Convex Relaxations of a Probabilistically Robust Control Design
Problem", \emph{52st IEEE Conference on Decision and Control}, Florence, Italy, 2013.

\bibitem{r_our work}
A. M. Jasour, C. Lagoa, ``Semidefinite relaxation of chance constrained algebraic problems", 51st IEEE Conference on Decision and Control, Maui, Hawaii, 2012.

\bibitem{r_indi}
C. M. Lagoa, F. Dabbene, and R. Tempo, ``Hard Bound on Probability of Performance with Applications to Circuit Analysis," \textit{IEEE Transactions on Circuits and Systems}, Vol. 55, No. 10, pp. 3178-3187, 2008.

\bibitem{r_convex_chance 1}
C. M. Lagoa, X. Li, and M. Sznaier, ``Probabilistically constrained linear programs and risk-adjusted controller design", \textit{SIAM J. Optim.}, Vol. 15, No. 3, pp. 938--951, 2005.

\bibitem{r_measure_ref 5}
R. Laraki, J. B. Lasserre, ``Semidefinite Programming for Min-Max Problems and Games", Mathematical Programming, Volume 131, Issue 1-2, pp 305-332, 2012.

\bibitem{r_measure_ref 4}
J. B. Lasserre, ``A semidefinite programming approach to the generalized problem of moments", Mathematical Programming, Vol 112, Issue 1, pp 65-92, 2008.

\bibitem{r_measure_ref 1}
J. B. Lasserre, ``Global optimization with polynomials and the problem of moments", \textit{SIAM J. Optim.}, Vol. 11, pp. 796--817, 2001.

\bibitem{r_measure_ref 2}
J. B. Lasserre, ``Moments Positive Polynomials and Their Applications", Imperial College Press, 2010.

\bibitem{r_measure_ref 3}
M. Laurent, ``Sums of Squares, Moment Matrices and optimization Over Polynomials", The IMA Volumes in Mathematics and its Applications, Vol 149, pp 157-270, 2009.

\bibitem{r_App_Economy 3}
X. Li, Z. Qin, L. Yang, ``A chance-constrained portfolio selection model with risk constraints", Applied Mathematics and Computation, vol 217, pp. 949-951, 2010.

\bibitem{r_upper 3}
L. B. Miller and H. Wagner, ``Chance-constrained programming with joint constraints", \textit{European Journal of Operations Research}, Vol. 13, pp. 930-945, 1965.

\bibitem{r_scenario 1}
A. Nemirovski, ``On safe tractable approximations of chance constraints", \textit{European Journal of Operations Research}, Vol. 219, No. 3, pp. 707-718, 2012.

\bibitem{r_bern 1}
A. Nemirovski, A. Shapiro, ``Convex Approximations of Chance Constrained Programs", \textit{SIAM Journal on Optimization}, Vol. 17, No. 4, pp. 969-996, 2006.

\bibitem{r_scenario 2}
A. Nemirovski, A. Shapiro, ``Scenario Approximations of Chance Constraints", Probabilistic and Randomized Methods for Design under Uncertainty, Springer, pp. 3-48, 2004.

\bibitem{r_upper 2}
A. Nemirovski, ``On tractable approximations of randomly perturbed convex constraints", \textit{Proceedings of the 42nd IEEE Conference on Decision and Control}, Maui, HI, pp.2419--2422, 2003.

\bibitem{Nesterov05}
Y. Nesterov, ``Smooth minimization of nonsmooth functions," Mathematical Programming, Series A, pp. 127-152, 2005.

\bibitem{Nesterov04}
Y. Nesterov, ``\protect{Introductory Lectures on Convex Optimization: A Basic Course}," \protect{Kluwer Academic Publishers}, 2004.

\bibitem{r_App_Predicitive Control1}
M. Ono, L. Blackmore, and B. C. Williams, ``Chance Constrained Finite Horizon Optimal Control
with Nonconvex Constraints", \emph{American Control Conference}, Canada, 2012.

\bibitem{r_bern 2}
J. Pinter, ``Deterministic approximations of probability inequalities", \textit{European Journal of Operations Research}, Vol. 33, pp. 219-239, 1989.
\bibitem{r_ref2 1}
G. C. Calafiore and L. El Ghaoui. On distributionally robust chance-constrained linear programs. Journal of Optimization Theory and Applications, 130(1):1-22, 2006.
\bibitem{r_ref2 2}
W. Chen, M. Sim, J. Sun, and C.-P. Teo. From CVaR to uncertainty set: Implications in joint chance-constrained optimization. Operations Research, 58(2):470-485, 2010.
\bibitem{r_ref2 3}
S.-S. Cheung, A. Man-Cho So, and K. Wang. Linear matrix inequalities with stochastically dependent perturbations and applications to chance-constrained semidefinite optimization. SIAM Journal on Optimization, 22(4):1394-1430, 2012.
\bibitem{r_ref2 4}
H. Xu, C. Caramanis, S. Mannor. Optimization under probabilstic envelope constraints. Operations Research, 60(3), 682-699, 2012.
\bibitem{r_ref2 5}
S. Zymler, D. Kuhn, and B. Rustem. Distributionally robust joint chance constraints with second-order moment information. Mathematical Programming, 137(1-2):167-198, 2013.

\bibitem{r_convex_chance 2}
A. Prekopa, ``Stochastic Programming", Kluwer Academic Publishers, 1995.

\bibitem{r_measure_ref 6}
M. Putinar, ``Positive polynomials on compact semi-algebraic sets", Indiana Univ. Math. J.
42 , 969-984, 1993.

\bibitem{r_nuc norm 2}
B. Recht, M. Fazel, P. Parrilo, ``Guaranteed Minimum-Rank Solutions of Linear Matrix Equations via Nuclear Norm Minimization",  \textit{SIAM Review}, Vol. 52, No. 3, pp. 471-501, 2010.

\bibitem{r_App_Predictive Control3}
S. K. Shah, C.D. Pahlajani, N. A. Lacock, H. G. Tanner, ``Stochastic receding horizon control for robots with probabilistic state constraints", IEEE International Conference on Robotics and Automation (ICRA), Saint Paul, MN, May 2012.

\bibitem{sedumi}
J.F. Sturm, ``Using SeDuMi 1.02, A Matlab toolbox for optimization over symmetric cones", Optimization Methods and Software, Vol. 11, No. 1-4, pp. 625-653, 1999.

\bibitem{r_App_Economy 1}
W. Tang, Q. Han, G. Li, ``The portfolio selection problems with chance-constrained", IEEE International Conference on Systems, Man, and Cybernetics, Tucson, AZ, 2001.

\bibitem{r_scenario 5}
R. Tempo, G. Calafiore, and F. Dabbene, ``Randomized Algorithms for Analysis and Control of Uncertain Systems", Communications and Control Engineering Series, Springer-Verlag, London, 2005.

\bibitem{Tseng08}
P. Tseng, ``On Accelerated Proximal Gradient Methods for Convex-Concave Optimization," submitted to SIAM Journal on Optimization, 2008.

\bibitem{r_App_Economy 2}
B. Xu, J. Yu, Y. Meng, ``The bi-objective stochastic chance-constrained optimization model of multi-project and multi-item investment combination based on the view of real options", IEEE International Conference on Industrial Engineering and Engineering Management, Hong Kong, 2009.

\bibitem{BenTal10}
Ben-Tal A., Bertsimas D., Brown D., ``A soft robust model for optimization under ambiguity," Oper. Res. 58(4):1220–1234, 2010.

\bibitem{BenTal06}
Ben-Tal A., Boyd S., Nemirovski A., ``Extending scope of robust optimization: Comprehensive robust counterparts of uncertain problems," Math. Programming Ser. B 107(1–2):63–89, 2006.

\bibitem{chebfun}
Battles, Z. and Trefethen, L. N., ``An extension of Matlab to continuous functions and operators," SIAM J. Sci. Comp. 25, pp. 1743–1770, 2004.
\end{thebibliography}
\end{document}